\renewcommand\labelenumi{(\roman{enumi})}
\renewcommand\theenumi\labelenumi
\def\namedlabel#1#2{\begingroup
    #2%
    \def\@currentlabel{#2}%
    \phantomsection\label{#1}\endgroup
}
\renewcommand\p@subfigure{\thefigure.(\arabic{subfigure}\expandafter)\@gobble}
\theoremstyle{plain}
\newtheorem{theorem}{Theorem}[section]
\newtheorem{lemma}[theorem]{Lemma}
\newtheorem{proposition}[theorem]{Proposition}
\newtheorem{corollary}[theorem]{Corollary}
\theoremstyle{definition}
\newtheorem{assumption}[theorem]{Assumption}
\newcommand{\E}{{\mathbb{E}}}
\newcommand{\N}{{\mathbb{N}}}
\newcommand{\R}{{\mathbb{R}}}
\newcommand{\diff}{\mathop{}\!\mathrm{d}}
\renewcommand{\S}{{\mathbb{S}}}
\newcommand{\Id}{{\operatorname{Id}}}
\newcommand{\realSH}{{Y}}
\newcommand{\mean}{{\widetilde m}}
\newcommand{\meanlm}{{\widetilde m_{\ell,m}}}
\newcommand{\hatmeanlm}{{\hat m_{\ell,m}}}
\newcommand{\hatmean}{{\hat m}}
\newcommand{\varlm}{v_{\ell,m}}
\newcommand{\hatvarlm}{{\hat v_{\ell,m}}}
\newcommand{\hatvar}{{\hat v}}
\newcommand{\std}{{\tilde v}}
\newcommand{\cova}{{Q}}
\newcommand{\tr}{{\operatorname{tr}\, }}
\newcommand{\approxOp}{{R(h{\Delta_{\S^2}}) }}
\begin{document}

\bibliographystyle{plainnat}

\title[Approximation Lévy-driven heat equations on the sphere]{Approximation of the Lévy-driven stochastic heat equation on the sphere}

\author[A.~Lang]{Annika Lang} 
\author[A.~Papini]{Andrea Papini} \address[Annika Lang, Andrea Papini]{Department of Mathematical Sciences, Chalmers University of Technology and University of Gothenburg, S--412 96 G\"oteborg, Sweden.} \email[]{annika.lang@chalmers.se, andreapa@chalmers.se}
\author[V.~Schwarz]{Verena Schwarz} \address[Verena Schwarz]{Department of Statistics, University of Klagenfurt, A--9020 Klagenfurt, Austria.} \email[]{verena.schwarz@\text{aau}.at} 

\thanks{Acknowledgment: This research was funded in parts by the Austrian Science Fund (FWF) [10.55776/DOC78], by the European Union (ERC, StochMan, 101088589), by the Swedish Research Council (VR) through grant no. 2020-04170, by the Wallenberg AI, Autonomous Systems and Software Program (WASP) funded by the Knut and Alice Wallenberg Foundation, and by the Chalmers AI Research Center (CHAIR). For open access purposes, the authors have applied a CC BY public copyright license to any author-accepted manuscript version arising from this submission. Views and opinions expressed are however those of the author(s) only and do not necessarily reflect those of the European Union or the European Research Council Executive Agency. Neither the European Union nor the granting authority can be held responsible for them.}

\subjclass{60H35, 65C30, 60H15, 35R60, 33C55, 65M70.}
\keywords{Stochastic heat equation. L\'evy processes. Stochastic evolution on the sphere. Euler–Maruyama scheme. Spectral approximation. Strong convergence. Weak convergence. Spherical harmonic functions.}

\begin{abstract}
The stochastic heat equation on the sphere driven by an additive square-integra\-ble L\'evy process
is approximated by a spectral method in space and forward and backward Euler--Maruyama schemes in time. New regularity results are proven for its solution. The spectral approximation is based on a truncation of the series expansion with respect to the spherical harmonic functions. For a given regularity of the initial condition and two different settings of regularity for the driving noise, strong convergence rates for the spectral approximation and for the Euler--Maruyama methods are proven. Moreover, weak rates of up to twice the strong rates are shown. Numerical simulations confirm the theoretical results.
\end{abstract}

\maketitle

\section{Introduction}

Stochastic partial differential equations (SPDEs) play a central role in modeling systems influenced by random effects across space and time. Traditionally, much of the foundational work in this area has focused on SPDEs, mostly on Euclidean spaces, driven by Wiener noise, leveraging its mathematical tractability and the rich theory developed around Gaussian processes \cite{DaPrato_Zabczyk_1992,peszat2007, PR07}. 
Numerical methods for such SPDEs have been developed and analyzed for more than thirty years by now, with references given for example in the books \cite{JentzMono,Lord_Powell_Shardlow_2014}.

However, many real-world phenomena, ranging from turbulent flows on Earth and stellar fluids to biological and financial systems \cite{fogedby1998levy_force, Woy2001, droge2003sep_sde, Ruffolo_2004, frontiers2021random_fields, cont2003financial}, exhibit abrupt non-Gaussian behavior that cannot be adequately captured by just a Gaussian noise. More so, applications on the Earth motivate to extend the theory to spheres \cite{marinucci2011}. These observations prompt the need to consider more general settings; to our knowledge, literature on surfaces is still lacking with first results in the Gaussian setting on the sphere given in \cite{lang2015, KAZASHI2019, LeGia2019, cohen2022, lang2023, ALODAT2024, AFL25+, CDGL26+}.

In this work, we take a step beyond the classical setting by studying SPDEs driven by square-integrable Lévy processes, which naturally can incorporate jumps \cite{Hausenblas2005, peszat2007, Haus12}. We consider the stochastic heat equation (SHE) on the sphere~$\S^2$, on a complete filtered probability space $(\Omega, \mathcal{F}, (\mathcal{F}_t)_{t\in[0,T]},\mathbb{P})$ and a finite time interval $[0,T]$, $T<\infty$, i.e.,
\begin{equation}\label{eq:HeatEq}
    \diff X(t) = \Delta_{\S^2} X(t) \diff t + \diff L(t)
\end{equation}
with $\mathcal{F}_0$-measurable initial condition $X(0)=X^0 \in L^2(\Omega, L^2(\S^2))$ driven by an infinite-dimensional L\'evy process $L$, which is independent of $X^0$ and will be introduced in more detail in Section~\ref{subse:noise_intro}.

These processes offer a more realistic \cite{reynolds2009scale_free, bott10, humphries2010environmental, Palyulin_2019, Herzog20} and flexible framework for modeling complex stochastic dynamics. The use of Lévy noise introduces a range of analytical and numerical challenges, but also opens new pathways for understanding the behavior of solutions to SPDEs under more general sources of randomness.

Our contributions follow two main directions. First, we prove strong convergence estimates for the spectral approximation in space and the Euler--Maruyama approximation in time with non-isotropic noise characterised by its Sobolev regularity. This generalises the results of \cite{lang2015,lang2023}. Second, we establish new regularity results and weak convergence results for the spectral and temporal approximation, which are new even in the classical Wiener-driven case.

The paper is structured as follows: In Section~\ref{Sec:set}, we describe the functional setting and the precise driving noise of \eqref{eq:HeatEq} and prove properties of the solution to~\eqref{eq:HeatEq}, i.e., we compute its first and second moments and show regularity of the solution. In Section~\ref{Sec:SpectApp}, we prove strong and weak convergence of the spectral approximation, where the rates depend on the smoothness of the L\'evy noise. In Section~\ref{Sec:4}, the Euler--Maruyama scheme is analyzed, showing strong and weak convergence rates. Finally, in Section~\ref{sec:Num}, numerical simulations confirm the theoretical results for Wiener and Poisson noise.  The codes that were used to generate the samples and the convergence plots for the numerical examples are available at \cite{lang2025levy}.

\section{Setting}\label{Sec:set}

In this section we first introduce the necessary notation, then we define the Lévy noise and derive the solution to \eqref{eq:HeatEq}. 
We denote by $\S^2$ the unit sphere in $\R^3$, i.e., 
\[\S^2= \{x\in\R^3: \|x\| = 1\},\]
where $\|\cdot\|$ denotes the Euclidean norm. We equip $\S^2$ with the geodesic metric given for all $x,y\in\R^3$ by $d(x,y) = \arccos\langle x,y \rangle_{\R^3}$ and couple the Cartesian coordinates $y\in\S^2$ to the polar coordinates $(\vartheta,\varphi)\in[0,\pi]\times[0,2\pi)$ by the transformation
\[y=(\sin(\vartheta)\cos(\varphi),\sin(\vartheta)\sin(\varphi),\cos(\vartheta)).\]
Let $\sigma$ be the Lebesgue measure on the sphere, which has the representation
\[\diff \sigma(y) = \sin(\vartheta)\diff \vartheta \diff \varphi,\]
and denote by $\mathcal{B}(\S^2)$ the Borel $\sigma$-algebra of $\S^2$. Note that $(\S^2,\mathcal{B}(\S^2),\sigma)$ is a measure space and $L^2(\S^2)$ is a Hilbert space with scalar product $\langle \cdot,\cdot \rangle_{L^2(\S^2)}$, which is defined for all $f,g\in L^2(\S^2)$ by
\[\langle f,g \rangle_{L^2(\S^2)}=\int_{\S^2} f(y)g(y) \diff \sigma(y).\]

We represent the real-valued spherical harmonic functions by $\mathcal{\realSH}= (\realSH_{\ell,m})_{\ell\in\N_0, m\in\{-\ell,\ldots,\ell\}}$, which consist of $\realSH_{\ell,m}\colon[0,\pi]\times[0,2\pi) \to \R$ given by
\begin{equation}
    \realSH_{\ell,m}(\vartheta,\varphi) = 
    \begin{cases}
        \sqrt{2}(-1)^m \sqrt{\frac{2\ell+1}{4\pi}\frac{(\ell-|m|)!}{(\ell+|m|)!}} P_{\ell,|m|}(\cos(\vartheta))\sin(|m|\varphi), & \text{for } m<0 \\[3mm]
     \sqrt{\frac{2\ell+1}{4\pi}} P_{\ell,m}(\cos(\vartheta)), 
     & \text{for } m=0 \\[3mm]
         \sqrt{2}(-1)^m \sqrt{\frac{2\ell+1}{4\pi}\frac{(\ell-m)!}{(\ell+m)!}} P_{\ell,m}(\cos(\vartheta))\cos(m\varphi), & \text{for }  m>0.
    \end{cases}
\end{equation}
Here, $(P_{\ell,m})_{\ell\in\N_0, m\in\{0,\ldots,\ell\}}$ are the associated Legendre polynomials, see for instance \cite{szeg1939orthogonal}. The real-valued spherical harmonics form an orthonormal basis of $L^2(\S^2)$, see \cite{BLANCO199719}.

The spherical Laplacian or Laplace--Beltrami operator is defined by 
\begin{equation}\label{eq:SphericalLaplacian}
    \Delta_{\S^2} = (\sin(\vartheta))^{-1} \frac{\partial}{\partial \vartheta}\Big(\sin(\vartheta)\frac{\partial}{\partial \vartheta} \Big) +  (\sin(\vartheta))^{-2} \frac{\partial^2}{\partial \varphi^2},    
\end{equation}
see \cite[Chapter 6]{byerly1893elementary}. For all $\ell\in\N_0=\N\cup\{0\}$, $m\in\{-\ell,\ldots,\ell\}$, we have
\begin{equation}\label{eq:EigenvaluesRealSH}
    \Delta_{\S^2} \realSH_{\ell,m} = -\ell(\ell+1) \realSH_{\ell,m}.
\end{equation}
This is a consequence of the fact that the real-valued spherical harmonics are a linear combination of the complex-valued spherical harmonics with the same eigenvalues.

Next, we define the Sobolev spaces $H^\eta(\S^2)$ on the unit sphere with smoothness index $\eta\in\R$, which are equivalent to Bessel potential spaces, see \cite{AnMan} for further details. 
For $\eta\in\R_+$, we define the positive Sobolev space by
\begin{equation}\label{eq:DefSpaceBesselPot}
    H^\eta(\S^2) = (\Id-\Delta_{\S^2})^{-\eta/2}L^2(\S^2)  =\{f\colon\S^2\rightarrow \mathbb{R}:\ \exists\ g\in L^2(\S^2)\ \text{s.t.}\ (\operatorname{Id}-\Delta_{\S^2})^{\eta/2}f=g\}.
\end{equation}
Note that $f\in L^2(\S^2)$ can be rewritten as $f = \sum_{\ell=0}^\infty \sum_{m=-\ell}^\ell f_{\ell,m} \realSH_{\ell,m}$ with real-valued coefficients $f_{\ell,m}$. Hence, for $f\in H^\eta(\S^2)$,
\begin{equation}
    (\Id-\Delta_{\S^2})^{\eta/2} f = \sum_{\ell=0}^\infty \sum_{m=-\ell}^\ell (1+\ell(\ell+1))^{\eta/2} f_{\ell,m} \realSH_{\ell,m}\in L^2(\S^2).
\end{equation}
The inner product in $H^\eta(\S^2)$ is given for all $f,g\in H^\eta(\S^2)$ by
\begin{equation}\label{eq:DefSpaceBesselPotSP}
    \langle f,g \rangle_{H^\eta(\S^2)} = \langle (\Id-\Delta_{\S^2})^{\eta/2} f, (\Id-\Delta_{\S^2})^{\eta/2} g\rangle_{L^2(\S^2)}.
\end{equation}
For $\eta\in \R_-$, we define the negative Sobolev space as the space of distributions generated by
$$
H^\eta(\mathbb{S}^2)=\left\{u=(\Id-\Delta_{\mathbb{S}^2})^\kappa v,\ v\in H^{2\kappa +\eta}(\mathbb{S}^2)\right\},
$$
where $\kappa\in\mathbb{N}$ is the smallest integer such that $2\kappa+\eta>0$. In this case, we define the norm to be
$$
\|u\|_{H^\eta(\mathbb{S}^2)}=\|v\|_{H^{2\kappa+\eta}(\mathbb{S}^2)}.
$$
Finally, we set $H^0(\mathbb{S}^2)=L^2(\mathbb{S}^2)$.  As the subsequent sections involve the (formal) series expansion of the noise in terms of real-valued spherical harmonic functions with coefficients in $L^2(\Omega, \R)$, we interpret the previously defined Hilbert spaces within the framework of a Gelfand triple, i.e. we consider for $H=L^2(\S^2)$ and $\eta\geq0$ the following embeddings
$$
V=H^\eta(\S^2) \hookrightarrow H\simeq H^*\hookrightarrow V^*\simeq H^{-\eta}(\S^2).
$$
For useful details on the introduced spaces and the Bessel potentials, we refer, for instance, to \cite{AnMan}. In this article, we use the Lebesgue--Bochner spaces $L^p(\Omega,H^\eta(\S^2))$ for $p\geq1$, $\eta\in \R$ with the norm
$$
\|X\|_{L^p(\Omega,H^\eta(\S^2))}=\mathbb{E}[\|X\|^p_{H^\eta(\S^2)}]^{1/p}
$$
similarly to \cite{lang2015}. In the following, we use a generic constant $C\in(0,\infty)$ to simplify notation. This constant is allowed to change from line to line. In the convergence proofs, it is always independent of the investigated quantities and we give the dependence on the involved parameters where relevant. 
The last thing to introduce from \eqref{eq:HeatEq} before being able to solve it is the driving noise. 

\subsection{The driving Lévy process}
\label{subse:noise_intro}

Let us introduce the driving Lévy process $L=(L(t))_{t\in[0,\infty)}$ with its properties in what follows. In the framework of~\cite[Definition 4.1]{peszat2007}, we assume that $L$ belongs to the class of square-integrable Lévy processes with values in $H^\eta(\S^2)$, $\eta\in(-1,\infty)$, i.e., $L(t) \in L^2(\Omega,H^\eta(\S^2))$. More specifically, $L$ has stationary and independent increments, is stochastically continuous, and satisfies $L(0)=0$. By \cite[Theorem 4.3]{peszat2007}, $L$ has a càdlàg modification and by \cite[Theorem 4.44]{peszat2007}, the mean $\E[L(t)] = t \mean \in H^\eta(\S^2)$ and covariance operator $\cova \in L^+_1(H^\eta(\S^2))$ exist, where $L^+_1(H^\eta(\S^2))$ denotes the space of all linear, trace class, symmetric, positive semi definite operators from $H^\eta(\S^2)$ into itself.

Since $H^\eta(\S^2) \subset L^2(\S^2)$ for $\eta \ge 0$, we can expand the process in the $L^2(\S^2)$-orthonormal basis of spherical harmonics~$\mathcal{\realSH}$ to obtain
\begin{equation}\label{eq:KL-expansion_Levy}
	L(t) = \sum_{\ell=0}^\infty \sum_{m=-\ell}^\ell  L_{\ell,m}(t)\realSH_{\ell,m}
\end{equation}
with real-valued c\`{a}dl\`{a}g L\'evy processes $L_{\ell,m} = \langle L, \realSH_{\ell,m}\rangle_{L^2(\S^2)}$. This series expansion converges in~$L^2(\Omega,L^2(\S^2))$ with
\begin{equation*}
	t \, \tr \cova = \E[\|L(t) - \E[L(t)]\|_{L^2(\S^2)}^2]
		= t \sum_{\ell=0}^\infty \sum_{m=-\ell}^\ell \varlm,
\end{equation*}
where we set $\varlm = \E[(L_{\ell,m}(1) - \E[L_{\ell,m}(1)])^2]$ for all $\ell \in\N_0, m\in\{-\ell,\ldots,\ell\}$, see \cite[Section~4.8]{peszat2007} for details. We define the standard deviation as
    \[\std = \sum_{\ell=0}^\infty \sum_{m=-\ell}^\ell \sqrt{\varlm} \realSH_{\ell,m},\]
    which is in~$H^\eta(\S^2)$ using the above definition of $\tr\cova$. Furthermore, the mean is given (and defined) by
\begin{equation}\label{eq:mean_Levy}
	t \, \mean = \E[L(t)]
		= \sum_{\ell=0}^\infty \sum_{m=-\ell}^\ell \E[L_{\ell,m}(t)] \realSH_{\ell,m}
		= t \sum_{\ell=0}^\infty \sum_{m=-\ell}^\ell \meanlm \realSH_{\ell,m},
\end{equation}
where we set $\meanlm = \E[L_{\ell,m}(1)]$, for all $\ell \in\N_0, m\in\{-\ell,\ldots,\ell\}$. For $\eta \in (-1,0)$, we assume the formal series expansion with respect to~$\mathcal{\realSH}$, which converges in~$L^2(\Omega,H^\eta(\S^2))$ and $Q$ is not of trace class in~$L^2(\S^2)$ but just in the larger Sobolev space~$H^\eta(\S^2)$. We gather all considerations in the following assumption: 

\begin{assumption}\label{ass:StrongLevyProcess}
    The L\'evy process~$L$ is in $L^2(\Omega, H^\eta(\S^2))$ for some $\eta \in (-1,\infty)$ with series expansion~\eqref{eq:KL-expansion_Levy} converging in $L^2(\Omega, H^\eta(\S^2))$, i.e.,
	\begin{equation*}
            \E[\|L(t)\|_{H^\eta(\S^2)}^2]
            = t \sum_{\ell=0}^\infty \sum_{m=-\ell}^\ell      (1+\ell(\ell+1))^\eta (\varlm + \meanlm^2) 
        = t (\|\std\|_{H^\eta(\S^2)}^2 + \|\mean\|_{H^\eta(\S^2)}^2)
        <\infty.
	\end{equation*}
\end{assumption}

To connect our results to the isotropic setting considered in~\cite{lang2015, lang2023}, we take the following more specific assumption, with angular power spectrum $A_\ell = a_\ell^2$ in the earlier works.

\begin{assumption}\label{ass:SpecialLevyProcess}
\looseness=-1
	The L\'evy process~$L$ given by the expansion~\eqref{eq:KL-expansion_Levy} satisfies for some $\alpha >0$
	\begin{equation*}
		L_{\ell,m} = a_{\ell} \hat{L}_{\ell,m},
	\end{equation*}
	where $a_0 \geq0$, $a_\ell \leq C \ell^{-\alpha/2}$ for all $\ell\in \N$, and $(\hat{L}_{\ell,m})_{\ell\in\N_0, m\in\{-\ell,\ldots,\ell\}}$ is a sequence of identically distributed real-valued Lévy processes with mean $\E[\hat{L}_{\ell,m}(t)] = t\hatmean$ and variance \[\E[(\hat{L}_{\ell,m}(t) - \E[\hat{L}_{\ell,m}(t)])^2] = t\hatvar.\]
\end{assumption}

We observe that Assumption~\ref{ass:SpecialLevyProcess} implies that $L\in L^2(\Omega,H^\eta(\S^2))$ for all $\eta<\alpha/2-1$, since 
\begin{equation}\label{special:varconv}
	\E[\|L(t)\|_{H^\eta(\S^2)}^2]
	\le C t \sum_{\ell=1}^\infty(1+\ell(\ell+1))^\eta\ell^{-\alpha}(1+2\ell)<\infty,
\end{equation}
if $2\eta - \alpha + 1 <-1$. Note that in these frameworks, we are not explicitly using the decomposition of the L\'evy process $L$ with the eigenfunctions of the covariance operator $Q$, hence we do not have any information on the dependence structure of the real--valued processes $L_{\ell,m}$ in contrast to \cite[Section 4.8.2]{peszat2007}.

For the weak convergence analysis, one important setting we consider is when $L$ admits the Lévy--Khinchin decomposition \cite[Theorem 4.23]{peszat2007}
\begin{align}\label{ass3}
	L(t) = W(t) + \int_0^t\int_\chi \upsilon(s,\zeta)\, \tilde{N}(\diff s, \diff \zeta),
\end{align}
where $W$ is a $Q$-Wiener process, and $\tilde{N}(\diff t, \diff \zeta) = N(\diff t, \diff \zeta) - \nu(\diff \zeta)\diff t$ is a compensated Poisson random measure. Here $\chi = L^2(\mathbb{S}^2)\setminus\{0\}$, the function $\upsilon \colon [0, T] \times \chi \to L^2(\mathbb{S}^2)$ is the intensity, and $\nu$ is the Lévy measure associated with~$L$  satisfying
\[
\int_\chi \min(1, \|\zeta\|_{L^2(\mathbb{S}^2)}^2)\, \nu(\mathrm{d} \zeta) < \infty,
\]
see \cite[Definition 4.14]{peszat2007}.
Under this decomposition, $L$ is a square-integrable martingale with zero mean \cite[Theorem 4.49]{peszat2007}. In this setting, convergence rates will depend on $L^p(\Omega)$ bounds, which require the following assumptions:
\begin{assumption}\label{ass:NoMeanLevyProcess} 
Let $\eta \in (-1, \infty)$ and $p > 2$. Assume that $L \in L^p(\Omega, H^\eta(\mathbb{S}^2))$ is a martingale with decomposition~\eqref{ass3} satisfying Assumption~\ref{ass:StrongLevyProcess} and
\begin{align*}
\sup_{s\leq T}\left(\int_\chi \|\upsilon(s,\zeta)\|_{H^\eta(\S^2)}^p\nu(\diff \zeta)
+ \Bigl(\int_\chi \|\upsilon(s,\zeta)\|_{H^\eta(\S^2)}^2\nu(\diff \zeta)\Bigr)^{p/2}\right)
< + \infty.
\end{align*}
\end{assumption}

We note that Assumptions~\ref{ass:StrongLevyProcess}, \ref{ass:SpecialLevyProcess}, and~\ref{ass:NoMeanLevyProcess} cover all types of L\'evy processes treated in~\cite{peszat2007}. In particular, this includes Wiener, compound Poisson, variance gamma, normal inverse Gaussian, and jump--diffusion processes. The considered processes are all in $L^2(\Omega)$, which also allows to treat truncated or tempered versions of $\alpha$-stable processes as in \cite{kim2006potential, marino26}. Processes beyond $L^2(\Omega)$ are subject to future work.

\subsection{Solution of the SHE}

In the next step, we solve the stochastic heat equation \eqref{eq:HeatEq}. For this, we use the spherical harmonic functions~${\mathcal{\realSH}}$ as ansatz, set $X_{\ell,m}(t) = \langle X(t), \realSH_{\ell,m}\rangle_{L^2(\S^2)}$ for all $\ell\in\N_0$, $m\in\{-\ell,\ldots,\ell\}$, and obtain the expansion
\begin{equation}\label{eq:SolSeriesCalc}
    \sum_{\ell=0}^\infty \sum_{m=-\ell}^\ell X_{\ell,m}(t) \realSH_{\ell,m} = \sum_{\ell=0}^\infty  \sum_{m=-\ell}^\ell \left(X_{\ell,m}(0 ) \realSH_{\ell,m} +  \int_0^t X_{\ell,m}(s) \Delta_{\S^2} \realSH_{\ell,m} \diff s + L_{\ell,m}(t) \realSH_{\ell,m} \right).
\end{equation}
Since the spherical harmonics are eigenfunctions of the spherical Laplacian, see \eqref{eq:EigenvaluesRealSH}, we get 
\begin{equation}\label{eq:SolSeries}
    \sum_{\ell=0}^\infty \sum_{m=-\ell}^\ell X_{\ell,m}(t) \realSH_{\ell,m} = \sum_{\ell=0}^\infty  \sum_{m=-\ell}^\ell \Big(X_{\ell,m}(0 )  -\ell(\ell+1)  \int_0^t X_{\ell,m}(s) \diff s + L_{\ell,m}(t) \Big) \realSH_{\ell,m}.
\end{equation}
Hence, the solution of \eqref{eq:HeatEq} is given by the solutions $(X_{\ell,m})_{\ell\in\N_0,m\in\{-\ell,\ldots,\ell\}}$ to the stochastic differential equations
\begin{equation}\label{eq:SDEComp}
    X_{\ell,m}(t) = X_{\ell,m}(0 )  -\ell(\ell+1)  \int_0^t X_{\ell,m}(s) \diff s + L_{\ell,m}(t).
\end{equation}
By the variation of constants formula, we obtain the solutions
\begin{equation}\label{eq:SDESol}
    X_{\ell,m}(t) = e^{-\ell(\ell+1)t} X_{\ell,m}(0 ) + \int_0^t e^{-\ell(\ell+1)(t-s)} \diff L_{\ell,m}(s),
\end{equation}
which yields the solution to~\eqref{eq:HeatEq}
\begin{equation}\label{eq:SolSeries2}
    X(t) = \sum_{\ell=0}^\infty \sum_{m=-\ell}^\ell \left( e^{-\ell(\ell+1)t}X_{\ell,m}(0)+ \int_0^t e^{-\ell(\ell+1)(t-s)} \diff L_{\ell,m}(s) \right) \realSH_{\ell,m}.
\end{equation}
The proof of existence, uniqueness, and $L^2$-regularity of the solution is classical, and the reader is directed to \cite{Hausenblas2005, peszat2007, ALBEVERIO2009835, Benth2023}. 

In the following lemma, we compute the moments of the solution.

\begin{lemma}\label{L:MESol}
    Under Assumption \ref{ass:StrongLevyProcess}, the stochastic heat equation \eqref{eq:HeatEq} satisfies
    \begin{align}
        \E[X(t)] &=\sum_{\ell=0}^\infty \sum_{m=-\ell}^\ell \Big( \E[X^0_{\ell,m}
        ]\, e^{-\ell(\ell+1)t}  +  \meanlm \int_0^t  e^{-\ell(\ell+1)(t-s)} \diff s \Big)\realSH_{\ell,m}
    \end{align}
    and
    \begin{equation}
        \begin{aligned}
            &\E\big[\|X(t)\|^2_{L^2(\S^2)}\big]\\
            &\quad = \sum_{\ell=0}^\infty \sum_{m=-\ell}^\ell  \E\Big[\Big( X_{\ell,m}^0  e^{-\ell(\ell+1)t} + \meanlm \int_0^t e^{-\ell(\ell+1)(t-s)} \diff s\Big)^2\Big] + \varlm\int_0^t e^{-2\ell(\ell+1)(t-s)} \diff s.
        \end{aligned}
    \end{equation}
\end{lemma}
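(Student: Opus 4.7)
The plan is to work coordinate-wise in the spherical harmonic basis starting from~\eqref{eq:SolSeries2} and to reduce the moment computations to standard scalar Lévy integral identities. The central device is to decompose each real-valued coefficient process $L_{\ell,m}$ into its drift and its zero-mean martingale part, $L_{\ell,m}(t) = t\meanlm + M_{\ell,m}(t)$; this is legitimate since Assumption~\ref{ass:StrongLevyProcess} guarantees $L_{\ell,m}$ is square-integrable with mean $t\meanlm$ and variance $t\varlm$. Inserting this split into~\eqref{eq:SDESol} rewrites $X_{\ell,m}(t) = A_{\ell,m} + B_{\ell,m}$, where $A_{\ell,m} := e^{-\ell(\ell+1)t}X^0_{\ell,m} + \meanlm\int_0^t e^{-\ell(\ell+1)(t-s)}\,\diff s$ has its random part $\mathcal{F}_0$-measurable and $B_{\ell,m} := \int_0^t e^{-\ell(\ell+1)(t-s)}\,\diff M_{\ell,m}(s)$ is a mean-zero square-integrable martingale integral of a deterministic integrand.

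For the first assertion on $\E[X(t)]$, I would apply expectation termwise to~\eqref{eq:SolSeries2}. This is justified by $Y \mapsto \E[Y]$ being a bounded linear operator from $L^2(\Omega, L^2(\S^2))$ into $L^2(\S^2)$ (via Jensen's inequality), so the Fourier coefficients of $\E[X(t)]$ are exactly $\E[X_{\ell,m}(t)]$. Since $\E[B_{\ell,m}] = 0$, only $\E[A_{\ell,m}]$ survives, which rearranges into the claimed formula.

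For the second assertion on $\E[\|X(t)\|^2_{L^2(\S^2)}]$, I would use Parseval's identity together with Tonelli's theorem (all summands non-negative) to commute the expectation with the double sum, reducing to computing $\E[X_{\ell,m}(t)^2] = \E[A_{\ell,m}^2] + 2\,\E[A_{\ell,m} B_{\ell,m}] + \E[B_{\ell,m}^2]$ mode by mode. The first term is exactly the first summand in the claim. The Itô isometry for stochastic integrals of deterministic integrands against the Lévy martingale $M_{\ell,m}$, whose predictable quadratic variation is $\varlm \cdot s$, yields $\E[B_{\ell,m}^2] = \varlm\int_0^t e^{-2\ell(\ell+1)(t-s)}\,\diff s$. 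The cross term vanishes by conditioning on $\mathcal{F}_0$: $A_{\ell,m}$ factors out as $\mathcal{F}_0$-measurable, while $\E[B_{\ell,m}\mid\mathcal{F}_0] = 0$ since $M_{\ell,m}$ is a martingale starting at zero driven against a deterministic integrand.

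I do not foresee any serious obstacle; the only mildly delicate point is invoking the $L^2$-isometry against the martingale part of a general square-integrable Lévy process (as opposed to a pure Wiener or compensated Poisson process), but this is classical via the Lévy--Itô decomposition of $M_{\ell,m}$. Everything else is bookkeeping afforded by the orthogonality of the spherical harmonics and the pointwise diagonalisation~\eqref{eq:SDEComp}.
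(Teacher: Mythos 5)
Your proposal is correct and lands on exactly the same mode-by-mode computations as the paper, but it gets there by a slightly different route, so a brief comparison is worthwhile. For the mean, the paper takes expectations in the integral form of \eqref{eq:HeatEq} and solves the resulting deterministic equation $\partial_t u = \Delta_{\S^2}u + \mean$ following \cite[Appendix~A]{lang2023}, whereas you read the formula directly off the mild solution \eqref{eq:SolSeries2}, using that $\E$ commutes with the $L^2(\Omega,L^2(\S^2))$-convergent series; the two are equivalent and yours is arguably more direct. For the second moment, the paper expands in the basis exactly as you do, kills the cross term by the independence of $X^0$ and $L$, and then cites \cite[Example~15.12]{brockwell2024} for the identities $\E\bigl[\int_0^t f\,\diff L_{\ell,m}\bigr]=\meanlm\int_0^t f\,\diff s$ and $\E\bigl[\bigl(\int_0^t f\,\diff L_{\ell,m}\bigr)^2\bigr]=\varlm\int_0^t f^2\,\diff s+\bigl(\meanlm\int_0^t f\,\diff s\bigr)^2$; you instead derive these from the drift-plus-martingale split $L_{\ell,m}(t)=t\meanlm+M_{\ell,m}(t)$ together with the $L^2$-isometry for deterministic integrands against the square-integrable martingale $M_{\ell,m}$. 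Your decomposition buys a self-contained argument (modulo the isometry, which is indeed classical) and makes the vanishing of the cross term transparent, while the paper's citation is shorter. The only point worth flagging is that your conditioning step $\E[B_{\ell,m}\mid\mathcal F_0]=0$ uses that increments of $L$ after time $0$ are independent of $\mathcal F_0$, i.e.\ that $L$ is a L\'evy process with respect to the given filtration; this is how the process is set up in Section~\ref{subse:noise_intro}, and it is the same fact the paper invokes as independence of $X^0$ and $L$, so there is no gap.
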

\begin{proof}
	For the expectation we obtain 
	\begin{equation}
		\E[X(t)] =  \E[X^0] +\int_0^t \Delta_{\S^2} \E[X(s)]\diff s + \E[L(t)].
	\end{equation}
	Since $\E[L(t)] = t \mean$ by~\eqref{eq:mean_Levy} and using $u(t) =  \E[X(t)]$ and $u_0 = \E[X^0]$, we need to solve the partial differential equation 
	\begin{equation}
		\partial_t u = \Delta_{\S^2} u + \mean, \quad u(0) = u_0.
	\end{equation}
    This is done similarly to \cite[Appendix A]{lang2023}, where the homogeneous equation is considered, and yields
    \begin{equation}
		\begin{aligned}
			\E[X(t)] 
			& =  \sum_{\ell=0}^\infty \sum_{m=-\ell}^\ell \Big( e^{-\ell(\ell+1)t} \E[\langle X(0), \realSH_{\ell,m}\rangle_{L^2(\S^2)}] + \int_0^t  e^{-\ell(\ell+1)(t-s)} \meanlm \diff s \Big)\realSH_{\ell,m}.
		\end{aligned}
	\end{equation}
	For the second moment of the solution, we calculate using \eqref{eq:SolSeries2},
	\begin{equation}\label{eq:SecMom1}
		\begin{aligned}
			\E[\|X(t)\|^2_{L^2(\S^2)}]
			& = \sum_{\ell=0}^\infty \sum_{m=-\ell}^\ell  
			\left(e^{-2\ell(\ell+1)t} \E[|X_{\ell,m}^0|^2]
			+ \E\Big[\Big| \int_0^t e^{-\ell(\ell+1)(t-s)} \diff L_{\ell,m}(s) \Big|^2\Big]\right)\\
			& \qquad + 2 \sum_{\ell=0}^\infty \sum_{m=-\ell}^\ell e^{-\ell(\ell+1)t}\, \E[ X_{\ell,m}^0]\, \E\Big[\int_0^t e^{-\ell(\ell+1)(t-s)} \diff L_{\ell,m}(s) \Big],
		\end{aligned}
	\end{equation}
	where we used that $\mathcal{\realSH}$ is an orthonormal basis of $L^2(\S^2)$ and $X^0$ is independent of~$L$.
	
	By \cite[Example 15.12]{brockwell2024} we obtain for the second term that
	\begin{equation}\label{eq:SecMom4}
		\begin{aligned}
			\E\Big[\Big| \int_0^t e^{-\ell(\ell+1)(t-s)} \diff L_{\ell,m}(s) \Big|^2\Big]
			&= \varlm\int_0^t e^{-2\ell(\ell+1)(t-s)} \diff s + \Big( \meanlm \int_0^t e^{-\ell(\ell+1)(t-s)} \diff s\Big)^2;\\
            \E\Big[\int_0^t e^{-\ell(\ell+1)(t-s)} \diff L_{\ell,m}(s) \Big]	
			&= \meanlm  \int_0^t e^{-\ell(\ell+1)(t-s)} \diff s.
		\end{aligned}
	\end{equation}
	So in conclusion, we get
	\begin{equation}
		\begin{aligned}
			&\E\big[\|X(t)\|^2_{L^2(\S^2)}\big]\\
			&\quad = \sum_{\ell=0}^\infty \sum_{m=-\ell}^\ell  \E\Big[\Big( X_{\ell,m}^0  e^{-\ell(\ell+1)t} + \meanlm \int_0^t e^{-\ell(\ell+1)(t-s)} \diff s\Big)^2\Big] + \varlm\int_0^t e^{-2\ell(\ell+1)(t-s)} \diff s,
		\end{aligned}
	\end{equation}
	which finishes the proof.
\end{proof}

In what follows, we establish higher-order regularity of the solution to~\eqref{eq:HeatEq}, a key ingredient in deriving weak convergence rates for a broad class of test functions in the next section.

\begin{proposition}\label{prop:regul}
Let $\rho\in\R$ and let $X$ be the mild solution given in~\eqref{eq:SolSeries2} with initial value $X^0\in L^{p}(\Omega,H^\gamma(\S^2))$. If one of the following assumptions holds:
\begin{enumerate}
    \item $L = W$ is a $Q$-Wiener process in $L^p(\Omega, H^\eta(\S^2))$ with $p\geq2$, $\rho-\eta\leq1$;\label{ass3Reg}
    \item $p\geq2$ and $L\in  L^p(\Omega,H^\eta(\S^2))$ satisfies Assumption~\ref{ass:StrongLevyProcess}, where the processes $(L_{\ell,m})_{\ell,m}$ are assumed to be independent if $p>2$, and $\rho-\eta\leq1$;\label{ass1Reg}
    \item $p>2$ and $L$ satisfies Assumption~\ref{ass:NoMeanLevyProcess} with $\rho-\eta<2/p$,\label{ass2Reg}
\end{enumerate}
then 
\begin{align}\label{thm:reg}
\|X(t)\|_{L^{p}(\Omega,H^\rho(\S^2))} \leq C (1 
	+  t^{-\max\{\rho-\gamma,0\}/2}\|X^0\|_{L^{p}(\Omega,H^\gamma(\mathbb{S}^2))}) <\infty.
\end{align}
\end{proposition}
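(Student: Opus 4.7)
I would split the mild solution~\eqref{eq:SolSeries2} into
\[
X(t) = S(t)X^0 + \int_0^t S(t-s)\mean\,\diff s + \int_0^t S(t-s)\,\diff M(s),
\]
where $S(t)=e^{t\Delta_{\S^2}}$ is the heat semigroup (diagonal in $\widetilde{\mathcal{Y}}$) and $M=L-t\mean$ is the martingale part of $L$; the drift integral is absent in cases~(\ref{ass3Reg}) and~(\ref{ass2Reg}). Each of the three summands is estimated separately in $L^p(\Omega,H^\rho(\S^2))$. For the semigroup acting on~$X^0$, diagonalising in $\widetilde{\mathcal{Y}}$ and using the elementary bound $\sup_{\ell\geq 0}(1+\ell(\ell+1))^{\rho-\nu}e^{-2\ell(\ell+1)t}\leq C t^{-(\rho-\nu)}$ for $\rho>\nu$ yields the standard parabolic smoothing inequality $\|S(t)x\|_{H^\rho(\S^2)}\leq C t^{-\max\{\rho-\nu,0\}/2}\|x\|_{H^\nu(\S^2)}$, which produces the factor $t^{-\max\{\rho-\nu,0\}/2}\|X^0\|_{L^p(\Omega,H^\nu(\S^2))}$. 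In case~(\ref{ass1Reg}), the drift integral is handled by Parseval together with the uniform-in-$t$ estimate $\int_0^t e^{-\ell(\ell+1)(t-s)}\,\diff s\leq C_T(1+\ell(\ell+1))^{-1}$, reducing the squared $H^\rho(\S^2)$-norm to $\sum_{\ell,m}(1+\ell(\ell+1))^{\rho-2}\meanlm^2$, which is finite under $\rho-\eta\leq 1$ since $\mean\in H^\eta(\S^2)$.

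The three assumptions diverge in how the stochastic convolution is controlled. For $p=2$ in cases~(\ref{ass3Reg}) and~(\ref{ass1Reg}), the Itô isometry for Wiener/Lévy-martingale integrals combined with Parseval gives
\[
\E\Big[\Big\|\int_0^t S(t-s)\,\diff M(s)\Big\|_{H^\rho(\S^2)}^2\Big] = \sum_{\ell,m}\varlm(1+\ell(\ell+1))^\rho\int_0^t e^{-2\ell(\ell+1)(t-s)}\,\diff s \leq C\sum_{\ell,m}\varlm(1+\ell(\ell+1))^{\rho-1},
\]
which is finite exactly when $\rho-\eta\leq 1$. For $p>2$ in case~(\ref{ass3Reg}), the stochastic convolution is Gaussian, so all its $L^p$-moments are equivalent to the $L^2$-moment by hypercontractivity. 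For $p>2$ in case~(\ref{ass1Reg}), the independence of $(L_{\ell,m})$ makes the scalar integrals $Y_{\ell,m}(t)=\int_0^t e^{-\ell(\ell+1)(t-s)}\,\diff M_{\ell,m}(s)$ independent real Lévy processes; I would then apply Minkowski's inequality in $L^{p/2}(\Omega)$ to the Parseval expansion $\sum(1+\ell(\ell+1))^\rho|Y_{\ell,m}(t)|^2$ together with the scalar BDG inequality, which yields $\|Y_{\ell,m}(t)\|_{L^p(\Omega)}^2\leq C(1+\ell(\ell+1))^{-1}$ and thus the same convergent series as for $p=2$.

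For case~(\ref{ass2Reg}), I would use the Lévy--Khinchin decomposition~\eqref{ass3} and apply the vector-valued Burkholder--Davis--Gundy/Kunita inequality in $H^\rho(\S^2)$ to the compensated Poisson integral, obtaining
\begin{align*}
&\E\Big[\Big\|\int_0^t\!\int_\chi S(t-s)\xi(s,\zeta)\,\tilde N(\diff s,\diff\zeta)\Big\|_{H^\rho(\S^2)}^p\Big]\\
&\qquad \leq C\,\E\Big[\Big(\int_0^t\!\int_\chi\|S(t-s)\xi\|_{H^\rho(\S^2)}^2\,\nu(\diff\zeta)\,\diff s\Big)^{p/2}\Big] + C\,\E\Big[\int_0^t\!\int_\chi\|S(t-s)\xi\|_{H^\rho(\S^2)}^p\,\nu(\diff\zeta)\,\diff s\Big].
\end{align*}
Combined with $\|S(t-s)\xi\|_{H^\rho(\S^2)}\leq C(t-s)^{-(\rho-\eta)/2}\|\xi\|_{H^\eta(\S^2)}$ and Assumption~\ref{ass:NoMeanLevyProcess}, both terms reduce to deterministic integrals $\int_0^t(t-s)^{-\alpha}\,\diff s$: the first with $\alpha=\rho-\eta$ (finite when $\rho-\eta<1$) and the second with $\alpha=p(\rho-\eta)/2$. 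The second term is the binding constraint and forces $\rho-\eta<2/p$; the $Q$-Wiener part of $L$ is bounded exactly as in case~(\ref{ass3Reg}).

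\textbf{Expected main obstacle.} The most delicate regime is case~(\ref{ass1Reg}) with $p>2$: without Gaussianity (as in~(\ref{ass3Reg})) or a direct $L^p$-control of the jump intensity (as in~(\ref{ass2Reg})), the required $L^p$-bound must be extracted solely from the componentwise independence of $(L_{\ell,m})$, and combining scalar BDG with Minkowski's inequality in $L^{p/2}(\Omega)$ without losing the sharp condition $\rho-\eta\leq 1$ is the step that requires the most care.
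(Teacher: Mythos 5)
Your overall architecture coincides with the paper's: split off the semigroup acting on $X^0$ and bound it by the parabolic smoothing estimate (identical to the paper's computation around~\eqref{rhonuestimate}), treat $p=2$ by the It\^o isometry/second-moment formula~\eqref{eq:SecMom4}, and in case~\eqref{ass2Reg} split via the L\'evy--Khinchin decomposition, bound the Gaussian convolution by the classical maximal inequality and the compensated Poisson convolution by the BDG/Kunita-type bound (the paper cites \cite[Lemma~3.1]{MARINELLI2010616} for exactly the two-term estimate you write), arriving at the same binding constraint $\rho-\eta<2/p$ from the $p$-th power term. The one place where you genuinely diverge is case~\eqref{ass1Reg} with $p>2$: the paper applies Lata\l a's sharpening of Rosenthal's inequality \cite[Corollary~3]{Latala_momentsSum} to the independent nonnegative summands $Z_{\ell,m}=(1+\ell(\ell+1))^{\rho}Y_{\ell,m}^2$, reducing the problem to $\max\{(\sum\E[Z_{\ell,m}])^{p/2},\sum\E[Z_{\ell,m}^{p/2}]\}$ and then verifying summability of $(1+\ell(\ell+1))^{\eta p/2}(\varlm^{p/2}+|\meanlm|^p)$ from $L\in L^p(\Omega,H^\eta(\S^2))$; you instead apply the triangle inequality in $L^{p/2}(\Omega)$ to $\sum_{\ell,m}Z_{\ell,m}$ and bound each $\|Y_{\ell,m}\|_{L^p(\Omega)}^2$ by scalar BDG. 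Your route is simpler and yields the same condition $\rho-\eta\le1$ because $\|Y_{\ell,m}\|_{L^p(\Omega)}^2\le C(\varlm(1+\ell(\ell+1))^{-1}+\meanlm^2(1+\ell(\ell+1))^{-2})$ resummes exactly as in~\eqref{rhoetaestimate}; notably, Minkowski requires no independence of the $(L_{\ell,m})$, so your argument would in fact dispense with that hypothesis, whereas the paper's Rosenthal bound exploits it (and is the sharper estimate, since $\sum\E[Z_{\ell,m}^{p/2}]\le(\sum\E[Z_{\ell,m}^{p/2}]^{2/p})^{p/2}$). One caveat you share with the paper: both arguments bound $\E[|Y_{\ell,m}|^p]$ by the $p/2$-power of the \emph{predictable} quadratic variation $\varlm\int_0^te^{-2\ell(\ell+1)(t-s)}\diff s$, whereas BDG involves the pathwise bracket $[\,\cdot\,]_t$, which for jump martingales and $p>2$ requires control of higher moments of the jumps; since the paper takes the same step, this is not a gap relative to it, but it is the point in your ``main obstacle'' that actually deserves the care you flag.
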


\begin{proof}
We consider the mild solution~$X$ in~\eqref{eq:SolSeries2} and denote the stochastic convolution by
\begin{equation*}
	\tilde{L}(t)
		= \sum_{\ell=0}^\infty \sum_{m=-\ell}^\ell \int_0^t e^{-\ell(\ell+1)(t-s)} \, \diff L_{\ell,m}(s) \, \realSH_{\ell,m}.
\end{equation*}
Using the triangle inequality, we split the norm
\begin{align*}
    &\|X(t)\|_{L^{p}(\Omega,H^\rho(\S^2))}
    	\leq \Bigl\|\sum_{\ell=0}^\infty \sum_{m=-\ell}^\ell e^{-\ell(\ell+1)t}X_{\ell,m}(0)\realSH_{\ell,m}\Bigr\|_{L^{p}(\Omega,H^\rho(\S^2))}
    		+ \|\tilde{L}(t)\|_{L^{p}(\Omega,H^\rho(\S^2))}.
\end{align*}
The first term satisfies that
\begin{align*}
	& \Bigl\|\sum_{\ell=0}^\infty \sum_{m=-\ell}^\ell e^{-\ell(\ell+1)t}X_{\ell,m}(0)\realSH_{\ell,m}\Bigr\|_{L^{p}(\Omega,H^\rho(\S^2))}^p\\
	& \qquad = \E \Bigl[ \Bigl(\sum_{\ell=0}^\infty \sum_{m=-\ell}^\ell (1+\ell(\ell+1))^\rho e^{-2\ell(\ell+1)t}(X_{\ell,m}(0))^2 \Bigr)^{p/2} \Bigr],
\end{align*}
and for $\rho \le \gamma$, it is bounded by $\|X^0\|_{L^{p}(\Omega,H^\gamma(\S^2))}^{p}$. For $\rho > \gamma$, we observe with $x^{\rho-\gamma} e^{-x} \le C$, with the finite constant depending on $\rho-\gamma$, hence for $\ell \neq 0$,
\begin{equation}\label{rhonuestimate}
	(1+\ell(\ell+1))^\rho e^{-2\ell(\ell+1)t}
		\le  C \, (1+\ell(\ell+1))^\gamma t^{-(\rho-\gamma)}.
\end{equation}
This yields
\begin{equation}\label{semiest-ic}
	\Bigl\|\sum_{\ell=0}^\infty \sum_{m=-\ell}^\ell e^{-\ell(\ell+1)t}X_{\ell,m}(0)\realSH_{\ell,m}\Bigr\|_{L^{p}(\Omega,H^\rho(\S^2))}
		\le C\, t^{- \max\{\rho-\gamma,0\}/2}\|X^0\|_{L^p(\Omega,H^\gamma(\S^2))}.
\end{equation}

For the stochastic convolution $\tilde{L}$, \eqref{eq:SecMom4} yields for $p=2$ under \ref{ass1Reg} that
\begin{align*}
	& \|\tilde{L}(t)\|^{2}_{L^{2}(\Omega,H^\rho(\S^2))}\\
		& \qquad = \sum_{\ell=0}^\infty \sum_{m=-\ell}^\ell (1+\ell(\ell+1))^{\rho} \left(\varlm \int_0^t e^{-2\ell(\ell+1)(t-s)} \diff s + \Big( \meanlm \int_0^t e^{-\ell(\ell+1)(t-s)} \diff s\Big)^2 \right),
\end{align*}
and since
\begin{align}
	\begin{split}		
	& (1+\ell(\ell+1))^{\rho} \left(\varlm \int_0^t e^{-2\ell(\ell+1)(t-s)} \diff s + \Big( \meanlm \int_0^t e^{-\ell(\ell+1)(t-s)} \diff s\Big)^2 \right)\\
	& \qquad \le \frac{3}{2} (1+\ell(\ell+1))^{\eta} (\varlm + \meanlm^2)\label{rhoetaestimate}
	\end{split}
\end{align}
for $\rho - \eta \le 1$, $\|\tilde{L}\|^{2}_{L^{2}(\Omega,H^\rho(\S^2))}$ is finite in this range and the claim for $p=2$ follows.

Next, let us assume \ref{ass1Reg} with $p>2$, i.e., that the L\'evy processes $(L_{\ell,m})_{\ell,m}$ are independent. Using the series decomposition and the properties of the norm yields 
\begin{equation*}
    \|\tilde L(t)\|^p_{L^p(\Omega,H^\rho(\S^2))}
    =\mathbb{E}\left[\left(\sum_{\ell=0}^\infty\sum_{m=-\ell}^\ell(1+\ell(\ell+1))^{\rho}\left(\int_0^t e^{-\ell(\ell+1)(t-s)} \diff L_{\ell,m}(s)\right)^2\right)^{p/2}\right].
\end{equation*}
Since all summands $Z_{\ell,m}(t) = (1+\ell(\ell+1))^{\rho}(\int_0^t e^{-\ell(\ell+1)(t-s)} \diff L_{\ell,m}(s))^2$ in the series expansion are independent and non-negative, \cite[Corollary~3]{Latala_momentsSum} as a generalization of the classical Rosenthal inequality (see \cite[Theorem~1]{Rosenthal1970OnTS}) implies
\begin{align*}
    &\|\tilde L(t)\|^p_{L^p(\Omega,H^\rho(\S^2))}
    \leq C \left(\frac{p/2}{\ln (p/2)}\right)^{p/2}
    \max\Bigl\{\Bigl(\sum_{\ell=0}^\infty\sum_{m=-\ell}^\ell \E[Z_{\ell,m}(t)]\Bigr)^{p/2}, \,
    \sum_{\ell=0}^\infty\sum_{m=-\ell}^\ell \E[Z_{\ell,m}(t)^{p/2}] \Bigr\}.
\end{align*}
The first term in the $\max$ is bounded as before for $\rho-\eta\leq1$. For the second term, we compute as follows the $p/2$th moment of the integral, i.e.
\begin{equation*}
	\E[Z_{\ell,m}(t)^{p/2}]
	= (1+\ell(\ell+1))^{\rho \, p/2} \, \E\left[\left|\int_0^t e^{-\ell(\ell+1)(t-s)} \diff L_{\ell,m}(s)\right|^{p}\right].
\end{equation*}
By \cite[Theorem~4.49]{peszat2007}, $L_{\ell,m}(s) - s \meanlm$ is a martingale, so that we split
\begin{align*}
	& \E\left[\left|\int_0^t e^{-\ell(\ell+1)(t-s)} \diff L_{\ell,m}(s)\right|^{p}\right]\\
		& \quad \le 2^{p-1} \left(
			\E\left[\left|\int_0^t e^{-\ell(\ell+1)(t-s)} \diff (L_{\ell,m}(s) - s \meanlm)\right|^{p}\right]
			+ |\meanlm|^p \left(\int_0^t e^{-\ell(\ell+1)(t-s)} \diff s \right)^p
			\right),
\end{align*}
where the first summand is a martingale by \cite[Corollary~8.17]{peszat2007} with quadratic variation
	$\int_0^t \varlm e^{-2\ell(\ell+1)(t-s)} \diff s$.
Therefore, we bound
\begin{equation*}
	\E\left[\left|\int_0^t e^{-\ell(\ell+1)(t-s)} \diff (L_{\ell,m}(s) - s \meanlm)\right|^{p}\right]
		\le C \left(\int_0^t \varlm e^{-2\ell(\ell+1)(t-s)} \diff s \right)^{p/2}
\end{equation*}
by the Burkholder--Davis--Gundy inequality \cite[Theorem~3.50]{peszat2007} and obtain with similar computations as in~\eqref{rhoetaestimate} for $\rho - \eta \le 1$,
\begin{align*}
	\sum_{\ell=0}^\infty\sum_{m=-\ell}^\ell \E[Z_{\ell,m}(t)^{p/2}]
		\le C \sum_{\ell=0}^\infty\sum_{m=-\ell}^\ell (1+\ell(\ell+1))^{\eta \, p/2} (\varlm^{p/2} + |\meanlm|^p).
\end{align*}
This is finite since $L\in  L^p(\Omega,H^\eta(\S^2))$ with
\begin{align*}
	\infty > \E[\|L(1)\|_{H^\eta(\S^2)}^p]
		& = \E \left[ \Bigl(\sum_{\ell=0}^\infty\sum_{m=-\ell}^\ell (1+\ell(\ell+1))^\eta L_{\ell,m}(1)^2 \Bigr)^{p/2}\right]\\
		& \ge \sum_{\ell=0}^\infty\sum_{m=-\ell}^\ell (1+\ell(\ell+1))^{\eta \, p/2} \E[|L_{\ell,m}(1)|^p],
\end{align*}
where the martingale $L_{\ell,m}(t) - t \meanlm$ has quadratic variation $t \varlm$ by \cite[Theorem~4.49]{peszat2007} and the lower bound of the Burkholder--Davis--Gundy inequality \cite[Theorem~3.50]{peszat2007} implies
\begin{align*}
	\E[|L_{\ell,m}(1)|^p]
		\ge \E[|L_{\ell,m}(1) - \meanlm|^p] + |\meanlm|^p
		\ge C (\varlm^{p/2} + |\meanlm|^p).
\end{align*}

Having shown \ref{ass1Reg} for $p>2$, we continue with the proof of~\ref{ass2Reg}, which we prove on a more abstract level. We split $\tilde{L}$ into a Wiener and a Poisson part by
\begin{equation*}
	\tilde{L}(t)
		= \tilde{W}(t) + \tilde{N}(t)
		= \int_0^t e^{\Delta_{\S^2}(t-s)}\diff W(s) + \int_0^t\int_\chi e^{\Delta_{\S^2}(t-s)}\upsilon(s,\zeta)\tilde N(ds,d\zeta),
\end{equation*}
where $(e^{\Delta_{\S^2}t})_{t \geq 0}$ denotes the semigroup generated by~$\Delta_{\S^2}$, which is given by $e^{\Delta_{\S^2}t} \realSH_{\ell,m} = e^{-\ell(\ell+1)t} \realSH_{\ell,m}$ and satisfies for $f \in L^2(\S^2)$ that $e^{\Delta_{\S^2}t} f = \sum_{\ell=0}^\infty \sum_{m=-\ell}^\ell e^{-\ell(\ell+1)t}f_{\ell,m} \realSH_{\ell,m}$ with bounds similar to~\eqref{semiest-ic}.
This yields
\begin{align*}
    \mathbb{E}\left[\|\tilde{L}(t)\|^p_{H^\rho(\S^2)}\right]\leq 2^{p-1}
    \mathbb{E}\left[\|\tilde{W}(t)\|^p_{H^\rho(\S^2)}\right] +
    2^{p-1}
    \mathbb{E}\left[\|\tilde{N}(t)\|^p_{H^\rho(\S^2)}\right],
\end{align*}
where the first term is bounded for all~$p$ by \cite[Theorem~4.36]{DaPrato_Zabczyk_1992} and the above result for $p=2$, which also proves the claim for~\ref{ass3Reg}.

For the second term, \cite[Lemma~3.1]{MARINELLI2010616} and similar calculations as in~\eqref{semiest-ic} applied to the semigroup $e^{\Delta_{\S^2}t}$ imply for $\rho - \eta < 2/p$,
\begin{align}\label{EstimatesN}
	& \E\left[\|\tilde{N}(t)\|^p_{H^\rho(\S^2)}\right]\\
	&\quad =\E \left[\|\int_0^t\int_\chi (\Id-\Delta_{\S^2})^{\rho/2}e^{\Delta_{\S^2}(t-s)}\upsilon(s,\zeta)\tilde N(\diff s,\diff \zeta)\|^p_{L^2(\S^2)}\right]\\
		&\quad \leq  C\int_0^t \left( 
			\int_\chi \|e^{\Delta_{\S^2}(t-s)}\upsilon(s,\zeta)\|_{H^\rho(\S^2)}^p\nu(\diff \zeta)
			+ \Bigl( \int_\chi \|e^{\Delta_{\S^2}(t-s)}\upsilon(s,\zeta)\|_{H^\rho(\S^2)}^2\nu(\diff \zeta)\Bigr)^{p/2}
			 \right)\diff s\\
	&\quad \leq  C \int_0^t (t-s)^{-\max((\rho-\eta),0) \, p/2} \Bigl(\int_\chi \|\upsilon(s,\zeta)\|_{H^\eta(\S^2)}^p\nu(\diff \zeta)
		+ \bigl(\int_\chi \|\upsilon(s,\zeta)\|_{H^\eta(\S^2)}^2\nu(\diff \zeta)\bigr)^{p/2}\Bigr) \diff s\\
	&\quad \leq  C \sup_{s\leq T}\left(\int_\chi \|\upsilon(s,\zeta)\|_{H^\eta(\S^2)}^p\nu(\diff \zeta)
+ \Bigl(\int_\chi \|\upsilon(s,\zeta)\|_{H^\eta(\S^2)}^2\nu(\diff \zeta)\Bigr)^{p/2}\right).
\end{align}
Here the constant $C$ depends on $p$ and the difference~$\rho-\eta$. This estimate is finite by Assumption~\ref{ass:NoMeanLevyProcess}, which concludes the proof. 
\end{proof}

For clarity, we note that in Proposition~\ref{prop:regul}\ref{ass1Reg} the independence assumption, strengthening Assumption~\ref{ass:StrongLevyProcess}, is crucial to obtain the maximal regularity rate in space, independent of integrability conditions. This will allow for a broader class of test functions without incurring suboptimal rates in Theorem~\ref{weakr}. Under Assumption~\ref{ass:NoMeanLevyProcess}, the generality of the L\'evy process does not allow for uniform regularity of moments.

\section{Spectral approximation in space}\label{Sec:SpectApp}

To approximate the solution to~\eqref{eq:HeatEq}, we start with a semi-discrete scheme obtained by a spatial truncation similarly to \cite{lang2015, cohen2022, lang2023}. More specifically, using the series expansion~\eqref{eq:SolSeries2}, we set for $\kappa  \in \N$
\begin{equation}\label{eq:ApproxSpace}
    X^{(\kappa)}(t) = \sum_{\ell=0}^\kappa \sum_{m=-\ell}^\ell \Big( e^{-\ell(\ell+1)t} X_{\ell,m}^0 + \int_0^t e^{-\ell(\ell+1)(t-s)} \diff L_{\ell,m}(s)  \Big)\realSH_{\ell,m}.
\end{equation}
With the same computations as in Lemma~\ref{L:MESol}, we observe that the expectation satisfies
\begin{equation}\label{eq:expect_spectral}
	\E[X^{(\kappa)}(t)] = \sum_{\ell=0}^\kappa \sum_{m=-\ell}^\ell \Big( \E[X_{\ell,m}^0] e^{-\ell(\ell+1)t}  + \meanlm \int_0^t  e^{-\ell(\ell+1)(t-s)}  \diff s \Big)\realSH_{\ell,m}
\end{equation}
and the second moment satisfies
\begin{align}\label{eq:second-moment_spectral}
	\begin{split}
		& \E\big[\|X^{(\kappa)}(t)\|^2_{L^2(\S^2)}\big]\\
		& \quad =  \sum_{\ell=0}^\kappa \sum_{m=-\ell}^\ell \bigg(   \E\Big[\Big( X_{\ell,m}^0  e^{-\ell(\ell+1)t} + \meanlm \int_0^t e^{-\ell(\ell+1)(t-s)} \diff s\Big)^2\Big]
		+ \varlm\int_0^t e^{-2\ell(\ell+1)(t-s)} \diff s \bigg).
	\end{split}
\end{align}
Since $X^{(\kappa)}$ is given by a finite expansion of the smooth spherical harmonic functions, it is clear that it is smooth for each fixed~$\kappa$. Moreover, it is uniformly bounded for all~$\kappa$ under the assumptions of Proposition~\ref{prop:regul} by
\begin{align*}
	\|X^{(\kappa)}(t)\|_{L^{p}(\Omega,H^\rho(\S^2))}^{p}
    \le \E\Bigl[ \bigl(\sum_{\ell=0}^\infty \sum_{m=-\ell}^\ell X_{\ell,m}(t)^2 (1+\ell(\ell+1))^\rho \bigr)^{p/2}\Bigr]
    = \|X(t)\|_{L^{p}(\Omega,H^\rho(\S^2))}^{p}.
\end{align*}

Similarly to \cite[Lemma~7.1]{lang2015} and \cite[Lemma~3.1]{lang2023}, we show strong convergence, where the rates coincide with those in the Wiener setting in $L^p(\Omega)$ under Assumption~\ref{ass:SpecialLevyProcess}, see \cite[Lemma~7.2]{lang2015}. Error rates in~$L^p(\Omega)$ beyond the Gaussian setting would require higher moment assumptions as in Assumption~\ref{ass:NoMeanLevyProcess} and are subject to future research.

\begin{theorem}\label{L:SA1}
Let $X$ be given by \eqref{eq:SolSeries2} and $X^{(\kappa)}$ by its spectral approximation~\eqref{eq:ApproxSpace}. Then,
\begin{equation*}
 		 \| X(t) - X^{(\kappa)}(t)\|_{L^2(\Omega, L^2(\S^2))}
 		\leq e^{-(\kappa+1)^2 t} \|X^0\|_{L^2(\Omega,L^2(\S^2))}
 		+ C\kappa^{-\beta},
 \end{equation*}
 where $\beta = \eta+1$ under Assumption~\ref{ass:StrongLevyProcess} and $\beta = \alpha/2$ under Assumption~\ref{ass:SpecialLevyProcess}.
\end{theorem}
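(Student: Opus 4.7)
The plan is to use that $X^{(\kappa)}(t)$ is exactly the truncation at $\ell=\kappa$ of the spectral series~\eqref{eq:SolSeries2}, so that
\begin{equation*}
    X(t) - X^{(\kappa)}(t) = \sum_{\ell=\kappa+1}^\infty \sum_{m=-\ell}^\ell \Big( e^{-\ell(\ell+1)t} X_{\ell,m}^0 + \int_0^t e^{-\ell(\ell+1)(t-s)} \diff L_{\ell,m}(s)  \Big)\realSH_{\ell,m}.
\end{equation*}
Since $\widetilde{\mathcal{Y}}$ is orthonormal in $L^2(\S^2)$ and $X^0$ is independent of $L$, the squared $L^2(\Omega,L^2(\S^2))$-norm decouples mode by mode, with each summand given by the moment formula established in Lemma~\ref{L:MESol} via~\eqref{eq:SecMom4}. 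From this point the argument reduces to a tail estimate on a deterministic series.

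First, I would split the tail into an initial-condition part and a stochastic-convolution part via $(a+b)^2\le 2(a^2+b^2)$ (or alternatively the $L^2(\Omega;L^2(\S^2))$ triangle inequality to keep the two contributions separate and additive). The initial-condition contribution collapses through Parseval to $\sum_{\ell \geq \kappa+1}\sum_{m}e^{-2\ell(\ell+1)t}\E[(X_{\ell,m}^0)^2]$, which I bound by $e^{-2(\kappa+1)^2 t}\,\|X^0\|_{L^2(\Omega,L^2(\S^2))}^2$ using that $\ell(\ell+1)\ge \ell^2\ge(\kappa+1)^2$ in the summation range. Taking square roots yields exactly the first term in the stated bound.

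For the stochastic-convolution tail I plug in the identity in~\eqref{eq:SecMom4}, so that each summand is controlled by $\varlm \int_0^t e^{-2\ell(\ell+1)(t-s)}\diff s + (\meanlm \int_0^t e^{-\ell(\ell+1)(t-s)}\diff s)^2$, and both time integrals are bounded uniformly in $t$ by $C(\ell(\ell+1))^{-1}$ (for the mean square term using also $(\ell(\ell+1))^{-2}\le(\ell(\ell+1))^{-1}$ once $\ell\ge 1$). Thus the remainder is bounded by
\begin{equation*}
    C\sum_{\ell=\kappa+1}^\infty \sum_{m=-\ell}^\ell \frac{\varlm+\meanlm^2}{\ell(\ell+1)}.
\end{equation*}
Under Assumption~\ref{ass:StrongLevyProcess} I factor out $(1+\ell(\ell+1))^{\eta}$ and use the elementary inequality $(\ell(\ell+1))^{-1}(1+\ell(\ell+1))^{-\eta}\le C\ell^{-2(\eta+1)}\le C\kappa^{-2(\eta+1)}$, valid precisely because $\eta\ge -1$; the remaining series is then the finite quantity appearing in Assumption~\ref{ass:StrongLevyProcess}, and the square root delivers $C\kappa^{-(\eta+1)}$, i.e.\ $\beta=\eta+1$. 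Under Assumption~\ref{ass:SpecialLevyProcess}, I instead substitute $\varlm=a_\ell^2\hatvar$ and $\meanlm^2=a_\ell^2 \hatmean^2$, use $a_\ell^2\le C\ell^{-\alpha}$, and bound the tail by $C\sum_{\ell=\kappa+1}^\infty (2\ell+1)\ell^{-\alpha-2}\le C\kappa^{-\alpha}$; the square root gives $\beta=\alpha/2$.

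The only slightly delicate point is the dyadic-type inequality $(\ell(\ell+1))^{-1}\le C\ell^{-2(\eta+1)}(1+\ell(\ell+1))^\eta$ near the boundary $\eta=-1$, which ensures that the exponent on $\kappa$ in the strong assumption is sharp; everything else is bookkeeping on top of Lemma~\ref{L:MESol} and the mild-form moment identity~\eqref{eq:SecMom4}.
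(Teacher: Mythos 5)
Your proposal is correct and follows essentially the same route as the paper's proof: orthogonal mode-by-mode decomposition, the moment identity~\eqref{eq:SecMom4} from Lemma~\ref{L:MESol} for the stochastic convolution, the bound $\ell(\ell+1)\ge(\kappa+1)^2$ for the initial-condition tail, and the same weight-shifting tail estimates giving $\kappa^{-(\eta+1)}$ under Assumption~\ref{ass:StrongLevyProcess} and $\kappa^{-\alpha/2}$ via $a_\ell^2\le C\ell^{-\alpha}$ under Assumption~\ref{ass:SpecialLevyProcess}. The only cosmetic difference is that the paper uses the triangle inequality before squaring (which you also mention as the alternative yielding the exact additive constant in the statement).
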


\begin{proof}
    Using the series expansions \eqref{eq:SolSeries2} and~\eqref{eq:ApproxSpace}, we obtain by the triangle inequality
    \begin{align*}
                \| X(t) - X^{(\kappa)}(t)\|_{L^2(\Omega, L^2(\S^2))}
                & \leq \Big\| \sum_{\ell=\kappa+1}^\infty \sum_{m=-\ell}^\ell  e^{-\ell(\ell+1)t} X_{\ell,m}^0 \realSH_{\ell,m} \Big\|_{L^2(\Omega, L^2(\S^2))}\\
                & \qquad + \Big\| \sum_{\ell=\kappa+1}^\infty \sum_{m=-\ell}^\ell \int_0^t e^{-\ell(\ell+1)(t-s)} \diff L_{\ell,m}(s) \realSH_{\ell,m} \Big\|_{L^2(\Omega, L^2(\S^2))}.
    \end{align*}
    The first summand satisfies analogously to \cite[Lemma 3.1]{lang2023}
    \begin{equation*}
                \Big\| \sum_{\ell=\kappa+1}^\infty \sum_{m=-\ell}^\ell  e^{-\ell(\ell+1)t} X_{\ell,m}^0 \realSH_{\ell,m} \Big\|_{L^2(\Omega, L^2(\S^2))} 
                \leq e^{-(\kappa+1)(\kappa +2)t} \|X^0\|_{L^2(\Omega,L^2(\S^2))},
    \end{equation*}  
    while \eqref{eq:SecMom4} in the proof of Lemma~\ref{L:MESol}, implies for the second term   
    \begin{align*}
            &\Big\| \sum_{\ell=\kappa+1}^\infty \sum_{m=-\ell}^\ell \int_0^t e^{-\ell(\ell+1)(t-s)} \diff L_{\ell,m}(s) \realSH_{\ell,m}\Big\|^2_{L^2(\Omega, L^2(\S^2))}\\
            &\quad = \sum_{\ell=\kappa+1}^\infty \sum_{m=-\ell}^\ell \Big(
            \varlm\int_0^t e^{-2\ell(\ell+1)(t-s)} \diff s + \Big( \meanlm \int_0^t e^{-\ell(\ell+1)(t-s)} \diff s\Big)^2\Big)\\
            &\quad \leq \sum_{\ell=\kappa+1}^\infty \sum_{m=-\ell}^\ell \Big(
            \varlm\frac{1}{2}\ell^{-2} + \meanlm^2 \ell^{-4} \Big),
    \end{align*}
    where we computed and bounded the integrals.

    If Assumption~\ref{ass:StrongLevyProcess} is satisfied for some $\eta >-1$, 
    \begin{align*}
            \sum_{\ell=\kappa+1}^\infty \sum_{m=-\ell}^\ell 
            \varlm \ell^{-2}
            & = \sum_{\ell=\kappa+1}^\infty \sum_{m=-\ell}^\ell 
            \varlm(1+\ell (\ell +1))^{\eta}(1+\ell (\ell +1))^{-\eta}\ell^{-2}\\ 
            &\leq  C (\kappa +1)^{-2-2\eta} \sum_{\ell=0}^\infty \sum_{m=-\ell}^\ell 
            \varlm (1+\ell (\ell +1))^{\eta}
            \leq C\kappa^{-2(1+\eta)} \|\std\|_{H^\eta(\S^2)}^2,
    \end{align*}
    and with the same arguments,
    \begin{equation*}
        \sum_{\ell=\kappa+1}^\infty \sum_{m=-\ell}^\ell \meanlm^2\ell^{-4} 
            \leq C \kappa^{-2(2+\eta)} \|\mean\|^2_{H^\eta(\S^2)}.
    \end{equation*}
    This proves the claim with $\beta = 1+\eta$ and implies under Assumption~\ref{ass:SpecialLevyProcess} convergence of order up to $\alpha/2$ but not including $\beta=\alpha/2$.
    We sharpen this bound by using the properties of the special L\'evy process in Assumption \ref{ass:SpecialLevyProcess}
    \begin{align*}
            \sum_{\ell=\kappa+1}^\infty \sum_{m=-\ell}^\ell \Big(
            \varlm\frac{1}{2}\ell^{-2} + \meanlm^2 \ell^{-4} \Big)
            & = \sum_{\ell=\kappa+1}^\infty \sum_{m=-\ell}^\ell \Big(
            a_\ell^2 \hatvarlm \frac{1}{2}\ell^{-2} + a_\ell^2 \hatmeanlm^2 \ell^{-4} \Big)\\ 
            &\leq C \hatvar \sum_{\ell=\kappa+1}^\infty (2\ell +1) \ell^{-\alpha-2} + C\hatmean^2 \sum_{\ell=\kappa+1}^\infty (2\ell +1) \ell^{-\alpha-4}
    \end{align*}
    where $\hat v,\ \hat m$ are defined as in Assumption \ref{ass:SpecialLevyProcess}. This expression is bounded by $\kappa^{-\alpha}$ as computed, e.g., in the proof of \cite[Prop.~5.2]{lang2015}, which yields the improved bound~$\alpha/2$.
\end{proof}

Having established strong convergence rates, we continue with weak convergence estimates. 
More precisely, taking an appropriate class of test functions $\varphi$, we aim to show that $\mathbb{E}[\varphi(X^{(\kappa)}(t))]$ converges to $\mathbb{E}[\varphi(X(t))]$, where $X$ is given by \eqref{eq:SolSeries2} and $X^{(\kappa)}$ by its spectral approximation~\eqref{eq:ApproxSpace}.
The reason for such an effort lies in the fact that for the approximation of stochastic partial differential equations the weak convergence rate is expected to be up to twice the strong convergence rate; see \cite{Lang_WR} for more details.

Based on the strong convergence proof of Theorem~\ref{L:SA1}, we obtain weak rates for the expectation and second moment as special test functions. While the error of the expectation contains an additional term compared to the Wiener case in~\cite[Lemma 3.2]{lang2023}, since the L\'evy process is not assumed to be centered, the second moment converges with the same rate under Assumption~\ref{ass:SpecialLevyProcess}.

\begin{corollary}\label{L:SA2}
 Let $\beta = \eta +1$ under Assumption~\ref{ass:StrongLevyProcess} and $\beta = \alpha/2$ under Assumption~\ref{ass:SpecialLevyProcess}.
 Given the mild solution~$X$ in~\eqref{eq:SolSeries2} and its spectral approximation~$X^{(\kappa)}$ in~\eqref{eq:ApproxSpace}, the error of the mean is bounded by
 \begin{equation*}
 	\| \E[X(t)] -\E[X^{(\kappa)}(t)]\|_{L^2(\S^2)} \leq  e^{-(\kappa+1)^2 t} \|\E[X^0]\|_{L^2(\S^2)}
 	+ C \kappa^{-(\beta+1)}.
 \end{equation*}
 Furthermore, the error of the second moment is bounded by
 \begin{equation*}
    |\E[\|X(t)\|^2_{L^2(\S^2)} - \|X^{(\kappa)}(t)\|^2_{L^2(\S^2)} ]|
 	 	\leq 2 e^{-2(\kappa+1)^2 t} \|X^0\|^2_{L^2(\Omega,L^2(\S^2))}
 	            + C \kappa^{-2\beta}.
 \end{equation*}
\end{corollary}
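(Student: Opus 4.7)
The plan hinges on the fact that, unlike in the strong convergence analysis, the errors of the mean and the second moment are available in completely explicit spectral form from Lemma~\ref{L:MESol} together with~\eqref{eq:expect_spectral} and~\eqref{eq:second-moment_spectral}. In both cases, the difference is a tail series indexed by $\ell\ge\kappa+1$, and the central observation is that the deterministic drift contributes one extra factor of $\ell^{-1}$ via $\int_0^t e^{-\ell(\ell+1)(t-s)}\,\diff s \le 1/(\ell(\ell+1))$ compared with the stochastic integral appearing in Theorem~\ref{L:SA1}. This single additional power of $\ell^{-1}$ is precisely what will upgrade the rate from $\beta$ to $\beta+1$ in the bound for the mean.

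For the first inequality, I would subtract~\eqref{eq:expect_spectral} from the formula in Lemma~\ref{L:MESol}, apply Parseval's identity in the orthonormal basis $\widetilde{\mathcal{Y}}$, and then split the resulting $\ell^2$-type sum by the triangle inequality into an initial-condition piece and a drift piece. The initial-condition piece is handled exactly as in Theorem~\ref{L:SA1} by factoring out $e^{-(\kappa+1)^2 t}$. The drift piece reduces to
\[
	\sum_{\ell=\kappa+1}^\infty \sum_{m=-\ell}^\ell \meanlm^2 \Big(\int_0^t e^{-\ell(\ell+1)(t-s)}\,\diff s\Big)^2
		\le C \sum_{\ell=\kappa+1}^\infty \sum_{m=-\ell}^\ell \meanlm^2 \, \ell^{-4},
\]
which I would bound by trading the factor $\ell^{-4}$ against the Sobolev weight $(1+\ell(\ell+1))^\eta$ exactly as in the proof of Theorem~\ref{L:SA1}, the extra $\ell^{-2}$ relative to the stochastic case giving the improved exponent $\beta+1$ after taking the square root. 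Under Assumption~\ref{ass:SpecialLevyProcess}, the same exponent emerges from the tail estimate $\sum_{\ell>\kappa}(2\ell+1)\ell^{-\alpha-4}\le C\kappa^{-\alpha-2}$.

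For the second moment, I would subtract~\eqref{eq:second-moment_spectral} from the formula in Lemma~\ref{L:MESol} and note that every summand of the resulting tail series is non-negative, so the absolute value may simply be dropped. The inequality $(a+b)^2\le 2a^2+2b^2$ then separates the initial-condition square from the drift square inside the expectation, producing the prefactor $2$ in the statement, and leaves three sums to control. The initial-condition tail gives $2e^{-2(\kappa+1)^2 t}\|X^0\|^2_{L^2(\Omega,L^2(\S^2))}$ via the same exponential-decay argument as in Theorem~\ref{L:SA1}; the drift tail is dominated by $C\kappa^{-2(\beta+1)}\le C\kappa^{-2\beta}$ by the estimate developed for the mean; and the variance tail
\[
	\sum_{\ell=\kappa+1}^\infty \sum_{m=-\ell}^\ell \varlm \int_0^t e^{-2\ell(\ell+1)(t-s)}\,\diff s
\]
is bounded by a verbatim repetition of the strong-convergence estimate in Theorem~\ref{L:SA1}, which already produces $C\kappa^{-2\beta}$ under either Assumption~\ref{ass:StrongLevyProcess} or Assumption~\ref{ass:SpecialLevyProcess}.

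The main obstacle is really just careful bookkeeping: one has to keep track of the delicate interplay between the number of factors of $\ell$ supplied by the time integrals (two factors from the drift, one from the stochastic convolution) and the Sobolev exponent $\eta$ (respectively the decay rate $\alpha$), and verify separately under each of the two assumptions that the advertised rates $\beta+1$ and $2\beta$ emerge. Once this accounting is done, no new analytical input beyond what was used in Theorem~\ref{L:SA1} is required.
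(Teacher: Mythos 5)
Your proposal is correct and follows essentially the same route as the paper: the mean error is handled identically, by isolating the drift tail $\sum_{\ell>\kappa}\sum_{m}\meanlm^2\ell^{-4}$ and trading the extra $\ell^{-2}$ against the Sobolev weight (respectively the decay $a_\ell^2$) to obtain the improved exponent $\beta+1$. The only cosmetic difference concerns the second moment, where the paper invokes the orthogonality identity $|\E[\|X(t)\|^2_{L^2(\S^2)}-\|X^{(\kappa)}(t)\|^2_{L^2(\S^2)}]|=\E[\|X(t)-X^{(\kappa)}(t)\|^2_{L^2(\S^2)}]$ (as in \cite[Prop.~4]{cohen2022}) and then cites Theorem~\ref{L:SA1} directly, whereas you expand the tail of the explicit second-moment series term by term --- both amount to the same computation and yield the same bound.
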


\begin{proof}
   We plug in the expressions for the two expectations derived in Lemma~\ref{L:MESol} and~\eqref{eq:expect_spectral} to obtain 
    \begin{equation*}
            \| \E[X(t)] -\E[X^{(\kappa)}(t)]\|_{L^2(\S^2)} 
             \le e^{-2(\kappa+1)^2 t} \|\E[X^0]\|_{L^2(\S^2)}
            	+ \Bigl(\sum_{\ell=\kappa +1}^\infty \sum_{m=-\ell}^\ell \ell^{-4}  \meanlm^2\Bigr)^{1/2}.
    \end{equation*}
    The claim follows with the same computations as in Theorem~\ref{L:SA1}.
    To bound the error of the second moment, we observe that the same computations as in \cite[Prop.~4]{cohen2022} yield
    \begin{equation*}
            | \E[\|X(t)\|^2_{L^2(\S^2)} - \|X^{(\kappa)}(t)\|^2_{L^2(\S^2)} ]|
             = \E[\|  X(t) - X^{(\kappa)}(t) \|^2_{L^2(\S^2)}  ],
    \end{equation*}
    which implies the claim by Theorem~\ref{L:SA1}.
\end{proof}

Next we prove weak error rates for the spectral approximation in space for a broader class of test functions. We obtain convergence rates that depend on the regularity of the test function, the solution, the initial condition, and the noise.
We first present the assumption on the test functions. 

\begin{assumption}\label{ass:TestFunc}
    Let $\varphi$ be a Fréchet differentiable test function that satisfies for fixed $\rho \ge 0$ and all $t\in[0,T]$,
    \begin{equation}
        \| \int_0^1 \varphi' (y X(t) + (1- y) X^{(\kappa)}(t)) \diff y \|_{L^2(\Omega, H^\rho(\S^2))} \leq C<\infty.
    \end{equation}
\end{assumption}

As shown in \cite{cohen2022}, independently of the specific equation, if the solution to \eqref{eq:HeatEq} satisfies $X(t)\in L^{2q}(\Omega, H^\rho(\S^2))$, this implies that the class of test functions includes those with polynomial growth of the derivative in 
$H^\rho(\S^2)$ up to degree $q$, i.e.
\begin{equation}\label{eq:test_function-reg-poly}
    \|\varphi'(f)\|_{H^\rho(\S^2)}\leq C\left(1+\|f\|^{q}_{H^\rho(\S^2)}\right),\text{ for all }f\in H^\rho(\S^2).
\end{equation}
Under the assumptions of Proposition \ref{prop:regul} with $p=2q$, this proves that the class of test functions is not trivial since in that case $X(t)\in L^{2q}(\Omega, H^\rho(\S^2))$.

With this, we are able to state our weak convergence result.

\begin{theorem}\label{weakr}
Assume that $X$ and $X^{(\kappa)}$ are given by~\eqref{eq:SolSeries2} and~\eqref{eq:ApproxSpace}, respectively, and $\varphi$ satisfies Assumption~\ref{ass:TestFunc} for some $\rho \ge 0$. Then the weak error is bounded by
\begin{equation*}
    \left|\mathbb{E}\left[ \varphi(X(t)) - \varphi(X^{(\kappa)}(t))\right]\right|
    \leq C e^{-(\kappa+1)^2 t} (\kappa+1)^{-\rho} \|X^0\|_{L^2(\Omega,L^2(\S^2))}
 		+ C \kappa^{-(\beta+\rho)},
\end{equation*}
where $\beta = \eta+1$ under Assumption~\ref{ass:StrongLevyProcess} and $\beta = \alpha/2$ under Assumption~\ref{ass:SpecialLevyProcess}.
\end{theorem}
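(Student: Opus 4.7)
The plan is to reduce the weak error to the $L^2(\Omega, H^{-\rho}(\S^2))$-error of $X(t) - X^{(\kappa)}(t)$ via a mean-value identity combined with a duality argument, and then redo the computation of Theorem~\ref{L:SA1} with an extra Sobolev weight. First I would write
\begin{equation*}
\varphi(X(t)) - \varphi(X^{(\kappa)}(t)) = \Big\langle \int_0^1 \varphi'(rX(t)+(1-r)X^{(\kappa)}(t))\diff r,\, X(t) - X^{(\kappa)}(t)\Big\rangle_{L^2(\S^2)},
\end{equation*}
which is the fundamental-theorem-of-calculus identity for real-valued Fréchet-differentiable functionals, the pairing being the $L^2(\S^2)$ inner product that identifies $\varphi'(x) \in H^\rho(\S^2) \subset L^2(\S^2)$ with a linear functional. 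Taking expectation and splitting the $L^2$-pairing via the $H^\rho/H^{-\rho}$ duality together with Cauchy--Schwarz yields
\begin{equation*}
\big|\E[\varphi(X(t)) - \varphi(X^{(\kappa)}(t))]\big| \leq \Big\|\int_0^1 \varphi'(\cdot)\diff r\Big\|_{L^2(\Omega,H^\rho(\S^2))} \cdot \|X(t) - X^{(\kappa)}(t)\|_{L^2(\Omega,H^{-\rho}(\S^2))},
\end{equation*}
and Assumption~\ref{ass:TestFunc} bounds the first factor by a constant uniformly in~$\kappa$.

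Everything then reduces to estimating the spectral error in the negative-order norm $\|X(t) - X^{(\kappa)}(t)\|_{L^2(\Omega, H^{-\rho}(\S^2))}$. I would repeat the proof of Theorem~\ref{L:SA1} nearly verbatim, only inserting the weight $(1+\ell(\ell+1))^{-\rho}$ in every Parseval sum. For the initial-data contribution, the bound $(1+\ell(\ell+1))^{-\rho} \leq C(\kappa+1)^{-2\rho}$ for $\ell \geq \kappa+1$ combines with the existing exponential to produce $C(\kappa+1)^{-\rho} e^{-(\kappa+1)^2 t}\|X^0\|_{L^2(\Omega,L^2(\S^2))}$. For the stochastic-convolution contribution, the same chain of inequalities used at the end of the proof of Theorem~\ref{L:SA1}, now carrying the extra $(1+\ell(\ell+1))^{-\rho}$ in each summand, produces the rate $\kappa^{-(\beta+\rho)}$ in both frameworks: under Assumption~\ref{ass:StrongLevyProcess} one uses $(1+\ell(\ell+1))^{-\rho-\eta}\ell^{-2} \leq C(\kappa+1)^{-2(\beta+\rho)}$ valid for $\ell \geq \kappa+1$, $\eta \geq -1$, $\rho \geq 0$, while under Assumption~\ref{ass:SpecialLevyProcess} one directly bounds the tail $\sum_{\ell\geq\kappa+1}(2\ell+1)\ell^{-\alpha-2}(1+\ell(\ell+1))^{-\rho}\leq C\kappa^{-\alpha-2\rho}$. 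The mean-contribution term $\meanlm^2\ell^{-4}(1+\ell(\ell+1))^{-\rho}$ produces a strictly better rate and is therefore subdominant.

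I expect the only delicate point to be bookkeeping, namely making sure that the Gelfand-triple convention of Section~\ref{Sec:set} is used consistently when identifying $\varphi'(x) \in H^\rho(\S^2)$ with the linear functional on $L^2(\S^2)$ and pushing the difference into $H^{-\rho}(\S^2)$, and verifying that the weight estimate $(1+\ell(\ell+1))^{-\rho-\eta}\ell^{-2} \leq C\kappa^{-2(\beta+\rho)}$ survives the regime $\rho+\eta \in [-1,0)$ (which it does, since the effective decay exponent $-2(\beta+\rho) = -2(\eta+\rho+1)$ is still non-positive for the admissible parameters). Once these two points are checked, the remainder of the argument is a mechanical reuse of the bounds already established for Theorem~\ref{L:SA1}.
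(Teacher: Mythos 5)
Your proposal is correct and follows essentially the same route as the paper: the mean-value identity combined with the $H^\rho/H^{-\rho}$ duality and Assumption~\ref{ass:TestFunc} to reduce the weak error to the strong error in the $H^{-\rho}(\S^2)$ norm, followed by a rerun of the proof of Theorem~\ref{L:SA1} with the extra weight $(1+\ell(\ell+1))^{-\rho}$. The paper leaves the final weighted tail estimates implicit, whereas you spell them out (including the check that the exponent $-2(\eta+\rho+1)$ stays negative), but the argument is the same.
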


\begin{proof}
As in \cite{cohen2022}, we consider the Gelfand triple $V\subset H\subset V^*$ with $V=H^\rho(\S^2),\ H=L^2(\S^2),\ V^*=H^{-\rho}(\S^2)$ and obtain using the mean value theorem and Hölder's inequality
\begin{align*}
& \left |\mathbb{E}\left[\varphi(X(t)) - \varphi(X^{(\kappa)}(t))\right]\right|\\
&\quad =\left |\mathbb{E}\left[\left<\int_0^1\varphi'\left(y X(t)+(1-y)X^{(\kappa)}(t)\right)\diff y , X(t)-X^{(\kappa)}(t)\right>_{V\times V^*} \right]\right|\\
&\quad \leq \Bigl\|\int_0^1\varphi'\left(y X(t)+(1-y)X^{(\kappa)}(t)\right) \diff y \Bigr\|_{L^2(\Omega,V)}\|X(t)-X^{(\kappa)}(t)\|_{L^2(\Omega,V^*)}.
\end{align*} 
The first norm is bounded by Assumption~\ref{ass:TestFunc}, while the second norm is the strong error computed in the weaker $H^{-\rho}(\S^2)$ norm than in Theorem~\ref{L:SA1}. Following the proof of Theorem~\ref{L:SA1} with new weights therefore yields
\begin{align*}
    &\|X(t)-X^{(\kappa)}(t)\|^2_{L^2(\Omega,V^*)}\\
        & \quad \leq \sum_{\ell=\kappa+1}^\infty\sum_{m=-\ell}^\ell \Big( e^{-2\ell(\ell+1)t}\mathbb{E}\left[|X^0_{\ell,m}|^2\right] 
        + \varlm\frac{1}{2}\ell^{-2} + \meanlm^2 \ell^{-4} \Big)(1+\ell(\ell+1))^{-\rho}
\end{align*}
and increased polynomial convergence rates in the initial condition and noise by~$\rho$. This finishes the proof.
\end{proof}

Based on the general weak convergence result and our regularity estimates for the solution in Proposition~\ref{prop:regul}, we summarize the results in the following corollary. We obtain the usual rule of thumb with a weak convergence rate that is twice the strong one in the first two settings of Proposition~\ref{prop:regul}. For the more general third setting, the rate is essentially $2\eta + 1 + 1/q$, i.e., twice the strong one if $\varphi'$ grows linearly and else depending on the degree of the polynomial growth condition in~\eqref{eq:test_function-reg-poly}.

\begin{corollary}\label{weakr_spec}
 Assume that $X$ and $X^{(\kappa)}$ are given by~\eqref{eq:SolSeries2} and~\eqref{eq:ApproxSpace}, respectively, satisfying Assumption~\ref{ass:StrongLevyProcess} with $L=W$ being a $Q$-Wiener prcoess or $L\in  L^p(\Omega,H^\eta(\S^2))$ with independent $(L_{\ell,m})_{\ell,m}$. Then, for all test functions~$\varphi$ satisfying \eqref{eq:test_function-reg-poly} with $q=p/2$ and $\rho=\eta+1$, and all $t\in(0,T]$, $X^{(\kappa)}(t)$ converges weakly to~$X(t)$ with error bounded by
 \begin{equation*}
    \left|\E\left[ \varphi(X(t)) - \varphi(X^{(\kappa)}(t))\right]\right|
    \leq C (e^{-(\kappa+1)^2 t} (\kappa+1)^{-(\eta+1)} \|X^0\|_{L^2(\Omega,L^2(\S^2))}
 		+ C \kappa^{-2(\eta+1)}.
\end{equation*}
In the more general setting, where $L$ satisfies Assumption~\ref{ass:NoMeanLevyProcess}, for  for all test functions~$\varphi$ satisfying \eqref{eq:test_function-reg-poly} with $q=p/2$ and $\rho < \eta + 2/p=\eta+1/q$, the error is bounded for all $t\in(0,T]$ by
\begin{equation*}
    \left|\E\left[ \varphi(X(t)) - \varphi(X^{(\kappa)}(t))\right]\right|
    \leq C (e^{-(\kappa+1)^2 t} (\kappa+1)^{-\rho} \|X^0\|_{L^2(\Omega,L^2(\S^2))}
 		+ C \kappa^{-(\eta+1+\rho)}.
\end{equation*}
\end{corollary}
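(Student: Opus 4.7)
The plan is to deduce both bounds of the corollary from Theorem~\ref{weakr} by checking Assumption~\ref{ass:TestFunc} for the polynomial-growth class of test functions~\eqref{eq:test_function-reg-poly} under each of the two regularity regimes, and then choosing the largest admissible value of the Sobolev index $\rho$.

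First, starting from~\eqref{eq:test_function-reg-poly} together with Minkowski's inequality in $L^{2}(\Omega, H^\rho(\S^2))$, I would bound the quantity appearing in Assumption~\ref{ass:TestFunc} by
\begin{equation*}
\Bigl\| \int_0^1 \varphi'(s X(t) + (1-s) X^{(\kappa)}(t))\,ds \Bigr\|_{L^2(\Omega, H^\rho(\S^2))}
\le C\bigl(1 + \|X(t)\|_{L^{2m}(\Omega, H^\rho(\S^2))}^m + \|X^{(\kappa)}(t)\|_{L^{2m}(\Omega, H^\rho(\S^2))}^m\bigr).
\end{equation*}
The factor involving $X(t)$ is finite by Proposition~\ref{prop:regul} applied with $p = 2m$: in case~(\ref{ass3Reg}) or~(\ref{ass1Reg}) this only requires $\rho - \eta \le 1$, while in case~(\ref{ass2Reg}) it enforces the tighter constraint $\rho - \eta < 2/p = 1/m$. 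For the spectral truncation, I would observe pathwise that $\|X^{(\kappa)}(t)\|_{H^\rho(\S^2)} \le \|X(t)\|_{H^\rho(\S^2)}$, since $X^{(\kappa)}(t)$ simply drops the high-frequency coefficients of $X(t)$ in the orthonormal spherical harmonic basis; hence the same bound transfers uniformly in~$\kappa$.

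Having verified Assumption~\ref{ass:TestFunc}, Theorem~\ref{weakr} with $\beta = \eta + 1$ supplies
\begin{equation*}
\bigl|\E[\varphi(X(t)) - \varphi(X^{(\kappa)}(t))]\bigr|
\le C\bigl(e^{-(\kappa+1)^2 t}(\kappa+1)^{-\rho} \|X^0\|_{L^2(\Omega, L^2(\S^2))} + \kappa^{-(\eta+1+\rho)}\bigr),
\end{equation*}
and I would finish by optimizing~$\rho$. In the Wiener and independent-components settings, $\rho = \eta + 1$ is admissible, producing the doubled rate $2(\eta+1)$. Under Assumption~\ref{ass:NoMeanLevyProcess}, every $\rho < \eta + 1/m$ is admissible, producing the rate $\eta + 1 + \rho$ asserted in the corollary.

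The only genuine difficulty is the bookkeeping between the four linked parameters $\eta$, $\rho$, $p = 2m$, and the polynomial degree~$m$ of $\varphi'$, ensuring that the constraint $\rho - \eta < 2/p$ from Proposition~\ref{prop:regul}\,(\ref{ass2Reg}) reproduces exactly the strict inequality $\rho < \eta + 1/m$ in the second half of the corollary, and that in cases~(\ref{ass3Reg}) and~(\ref{ass1Reg}) the L\'evy process is assumed to live in $L^{2m}(\Omega,H^\eta(\S^2))$ (with independent components when $m>1$) so that Proposition~\ref{prop:regul} indeed applies. The transfer of the regularity bound from $X$ to its truncation $X^{(\kappa)}$ is, by contrast, immediate from the orthogonality of the spherical harmonic basis.
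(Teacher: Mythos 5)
Your proposal is correct and follows exactly the route the paper intends: the corollary is stated without a separate proof precisely because it combines Theorem~\ref{weakr} with the verification of Assumption~\ref{ass:TestFunc} via the polynomial-growth bound~\eqref{eq:test_function-reg-poly} and Proposition~\ref{prop:regul} with $p=2m$, then takes the largest admissible $\rho$ in each regime. Your additional observation that $X^{(\kappa)}(t)$ is an orthogonal projection of $X(t)$, so its $H^\rho(\S^2)$ norm is controlled pathwise uniformly in $\kappa$, is a correct and welcome piece of bookkeeping that the paper leaves implicit.
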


We conclude this section with summarizing the results in the different settings for the spectral approximation with truncation parameter~$\kappa$. Theorem~\ref{L:SA1} shows that the strong spectral convergence rate is of order $\kappa^{-\beta}$, with $\beta=\eta+1$ under Assumption~\ref{ass:StrongLevyProcess} and $\beta=\alpha/2$ under Assumption~\ref{ass:SpecialLevyProcess}. The latter ensures $L \in L^2(\Omega, H^\eta(\S^2))$ for all $\eta < \alpha/2 - 1$, hence the boundary case can be approached, giving improved rates. 

In the weak sense, Theorem~\ref{weakr} and Corollary~\ref{weakr_spec} are able to enhance the rate to up to $\kappa^{-2\beta}$, reflecting the classical doubling of rates compared to strong convergence.
The assumptions determine the attainable weak rates. Under Assumptions~\ref{ass:StrongLevyProcess} and~\ref{ass:SpecialLevyProcess}, for Wiener noise or Lévy noise with independent increments, one obtains the rate $\kappa^{-2(\eta+1)}$. The independence property allows for a rich class of test functions with integrability $p>2$, without affecting the convergence rate. In contrast, under Assumption~\ref{ass:NoMeanLevyProcess}, where independence is not available and the noise is more general, the rate becomes essentially $\kappa^{-2\eta - 1 - 2/p}$. This introduces an explicit dependence on the moment parameter~$p$: a larger $p$ permits a broader class of test functions but leads to a reduced convergence rate.

\section{Euler--Maruyama approximation in time}\label{Sec:4}

In the previous section, we have approximated the solution to~\eqref{eq:HeatEq} in space by spatial truncation in~\eqref{eq:ApproxSpace}. To compute the solution with this scheme, one would have to simulate the stochastic convolution, which is possible in the Wiener case as shown in~\cite{lang2015}. Unfortunately, this is no longer the case for more general L\'evy noise. Therefore, we need to introduce an additional time discretization. As in~\cite{lang2023}, we introduce a forward and a backward Euler--Maruyama scheme and prove convergence. 

For $n\in\N$, we take an equidistant grid on the interval~$[0,T]$ with grid size $h = T/n$ and grid points $t_k = k h$ for $k\in\{0,\ldots,n\}$. The forward, resp.\ backward, Euler--Maruyama scheme for~\eqref{eq:SDEComp} is given by
\begin{equation}\label{eq:es}
    X^{(h)}_{\ell,m}(t_k) = \xi_\ell(h) X^{(h)}_{\ell,m}(t_{k-1}) +\xi_\ell(h)^\delta \Delta L_{\ell,m}(t_k),
\end{equation}
where $\Delta L_{\ell,m}(t_k) = L_{\ell,m}(t_k) - L_{\ell,m}(t_{k-1})$, $\xi_\ell(h) = (1- \ell(\ell+1)h)$ and $\delta =0$ for the forward scheme, while $\xi_\ell(h) = (1+ \ell(\ell+1)h)^{-1}$ and $\delta =1$ for the backward scheme.
Recursively, we obtain
\begin{equation}\label{eq:EulerSchemeComp}
    X^{(h)}_{\ell,m}(t_k) = \xi_\ell(h)^k X_{\ell,m}(0)+ \sum_{j=1}^k \xi_\ell(h)^{k-j+\delta} \Delta L_{\ell,m}(t_j),
\end{equation}
and hence, the fully discrete approximation of~\eqref{eq:HeatEq} is given by
\begin{equation}\label{eq:EulerScheme}
    X^{(\kappa, h)}(t_k) = \sum_{\ell=0}^\kappa \sum_{m=-\ell}^\ell X^{(h)}_{\ell,m}(t_k) \realSH_{\ell,m}.
\end{equation}

As discussed in~\cite{lang2023}, for the forward Euler--Maruyama scheme, stability is only guaranteed if $\ell (\ell+1) h\le 1$, while the backward Euler--Maruyama scheme is unconditionally stable. Therefore, the backward scheme does not require any time restrictions and couplings of the spatial and temporal discretizations. As usual for error estimates of mild solutions with semigroups, see e.g., \cite{T06}, we couple the discretizations for optimal error estimates, which is in our context $\ell (\ell+1) h\le C_c$ for some finite constant~$C_c$. Hence, in the following we assume $\ell (\ell+1) h\le C_c$ where $C_c\leq 1$ for the forward and $C_c$ is a finite constant for the backward Euler--Maruyama scheme.

The expectation of~$X^{(\kappa,h)}$ satisfies
	\begin{equation}\label{eq:expect_EM}
			\E[ X^{(\kappa, h)}(t_k) ] 
			= \sum_{\ell=0}^\kappa \sum_{m=-\ell}^\ell \Big( \xi_\ell(h)^k \E[ X_{\ell,m}^0] + \sum_{j=1}^k \xi_\ell(h)^{k-j+\delta} h\, \meanlm \Big)  \realSH_{\ell,m},
	\end{equation}
which follows by the linearity of the expectation. To show that the second moment satisfies
	\begin{equation}\label{eq:second-mom_EM}
			\begin{split}
			&\E[ \| X^{(\kappa, h)}(t_k)\|^2_{L^2(\S^2)} ]\\
			&=\sum_{\ell=0}^\kappa \sum_{m=-\ell}^\ell \E\Big[\Big( X_{\ell,m}^0 \xi_\ell(h)^{k} + \meanlm h \sum_{j=1}^k \xi_\ell(h)^{k-j+\delta}\Big)^2\Big]
			+  \varlm h \sum_{j=1}^k \xi_\ell(h)^{2(k-j+\delta)} \Big),
			\end{split}
	\end{equation}   
we plug in \eqref{eq:EulerScheme} and compute the square obtaining
	\begin{align*}
			&\E[ \| X^{(\kappa, h)}(t_k)\|^2_{L^2(\S^2)} ]\\
            & \quad = \E\Big[ \Big\| \sum_{\ell=0}^\kappa \sum_{m=-\ell}^\ell \xi_\ell(h)^k X_{\ell,m}(0) \realSH_{\ell,m} \Big\|^2_{L^2(\S^2)} \Big]\\
			& \hspace*{3em} + 2 \E\Big[ \Big\langle  \sum_{\ell=0}^\kappa \sum_{m=-\ell}^\ell \xi_\ell(h)^k X_{\ell,m}^0 \realSH_{\ell,m}, \sum_{\ell=0}^\kappa \sum_{m=-\ell}^\ell \sum_{j=1}^k \xi_\ell(h)^{k-j+\delta} \Delta L_{\ell,m}(t_j) \realSH_{\ell,m} \Big\rangle_{L^2(\S^2)} \Big]\\
			& \hspace*{3em} + \E\Big[ \Big\| \sum_{\ell=0}^\kappa \sum_{m=-\ell}^\ell \sum_{j=1}^k \xi_\ell(h)^{k-j+\delta} \Delta L_{\ell,m}(t_j) \realSH_{\ell,m} \Big\|^2_{L^2(\S^2)} \Big].
	\end{align*}
	While the first and third term are computed as in~\cite{lang2023},
	the additional second term satisfies with the independence of the Lévy process and the initial condition that
	\begin{align*}
		\E\Big[ \Big\langle  &\sum_{\ell=0}^\kappa \sum_{m=-\ell}^\ell \xi_\ell(h)^k X_{\ell,m}^0 \realSH_{\ell,m}, \sum_{\ell=0}^\kappa \sum_{m=-\ell}^\ell \sum_{j=1}^k \xi_\ell(h)^{k-j+\delta} \Delta L_{\ell,m}(t_j) \realSH_{\ell,m} \Big\rangle_{L^2(\S^2)} \Big]\\
		&= \E\Big[ \sum_{\ell=0}^\kappa \sum_{m=-\ell}^\ell \xi_\ell(h)^k X_{\ell,m}^0 \sum_{j=1}^k \xi_\ell(h)^{k-j+\delta} \Delta L_{\ell,m}(t_j) \Big]\\
		&= \sum_{\ell=0}^\kappa \sum_{m=-\ell}^\ell \xi_\ell(h)^k h  \sum_{j=1}^k \xi_\ell(h)^{k-j+\delta}\, \E[ X_{\ell,m}^0] \meanlm.
	\end{align*}

Moreover, similarly to $X$ and $X^{(\kappa)}$, the fully discrete approximation is uniformly bounded for $\kappa \in \N$ and $h>0$ under the same assumptions as in Proposition~\ref{prop:regul}, i.e.,
\begin{equation}\label{eq:regEM}
    \|X^{(\kappa,h)}(t_k)\|_{L^{p}(\Omega,H^\rho(\S^2))} \leq C (1 
		+  t_k^{-\max\{\rho-\gamma,0\}/2}\|X^0\|_{L^{p}(\Omega,H^\gamma(\mathbb{S}^2))}) <\infty.
\end{equation}
This is shown by repeating the proof of Proposition~\ref{prop:regul} with $e^{-\ell(\ell+1)t_k}$ replaced by $\xi_\ell(h)^k$ and $e^{\Delta_{\S^2}t_k}$ by $\approxOp^k$, where
\begin{equation}\label{ratapx}
	\approxOp f= \sum_{\ell = 0}^{\infty}\sum_{m=-\ell}^\ell \xi_\ell(h)\, \langle f, \realSH_{\ell,m}\rangle_{L^2(\S^2)} \realSH_{\ell,m}
\end{equation}
is defined analogously as in \cite{kruse2014}. 
Since $\xi_\ell(h) \le e^{C_c} e^{-\ell(\ell+1)h}$, see the proofs of \cite[Prop.~4.1 \& 4.2]{lang2023}, we obtain $\|\approxOp f \|_{H^\rho(\S^2)} \le e^{C_c} \|e^{\Delta_{\S^2}h} f\|_{H^\rho(\S^2)}$ and can therefore reduce the computations to those for $X^{(\kappa)}$ and~$X$.

The following proposition will be needed in the convergence proofs and summarizes results on the approximation of the exponential function mainly from~\cite{lang2023}.

\begin{proposition}\label{prop:AuxExp}
   For all $\mu\in(0,1]$ and $\gamma,\rho \ge 0$, there exist constants $C_{\mu}, C_{\mu,\rho,\gamma} \in(0,\infty)$ such that for all $\ell,k\in\N$ and $h\in(0,\infty)$ with $\ell(\ell+1)h \leq C_c$ and $a \in \{1,2\}$
        \begin{align}
        |e^{-\ell(\ell+1)t_k} -\xi_\ell(h)^k|^a (1+\ell(\ell+1))^{-\rho} & \le C_{\mu,\rho,\gamma} h^{a\mu} t_k^{\min\{a,\gamma+\rho\}-a\mu} (\ell(\ell+1))^{\min\{a-\rho,\gamma\}},\label{Peqi}\\
        \sum_{j=1}^k \int_{t_{j-1}}^{t_j} \big(e^{-\ell(\ell+1)(t_k-s)}- \xi_\ell(h)^{k-j+\delta}\big)^2 \diff s 
        & \leq C_{\mu}(\ell(\ell+1))^{2\mu-1}h^{2\mu},\label{Peqiii} \\
        \qquad\Big|\sum_{j=1}^k \int_{t_{j-1}}^{t_j} e^{-a\ell(\ell+1)(t_k-s)}- \xi_\ell(h)^{a(k-j+\delta)} \diff s\Big| & \leq C_{\mu}(\ell(\ell+1))^{\mu-1}h^{\mu}.\label{Peqvi} 
        \end{align}
\end{proposition}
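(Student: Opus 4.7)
The three estimates concern the quality of the one-step approximations $\xi = 1 - x$ (forward) and $\xi = (1+x)^{-1}$ (backward) to $e^{-x}$, where $x = \ell(\ell+1) h \in [0, C_c]$, together with how these pointwise errors aggregate under iteration and integration. The plan is to prove \eqref{Peqi} first as the basic pointwise estimate, then deduce \eqref{Peqiii} and \eqref{Peqvi} by inserting an intermediate value of the exponential inside each time step.

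For \eqref{Peqi} I would start from the telescoping identity
\begin{equation*}
    e^{-xk} - \xi^k = (e^{-x} - \xi) \sum_{j=0}^{k-1} e^{-x(k-1-j)} \xi^j.
\end{equation*}
Taylor expansion to second order gives $|e^{-x} - \xi| \leq C x^2$ uniformly in $x \in [0, C_c]$ for either scheme, and writing $x^2 \leq C_c^{1-\mu} x^{1+\mu}$ produces the factor $x^{1+\mu}$. For the geometric sum, an elementary calculus argument shows $\xi \leq e^{-\alpha x}$ with $\alpha = \alpha(C_c) \in (0,1]$ (namely $\alpha = 1$ for the forward scheme, and $\alpha = 1/(1+C_c)$ for the backward scheme), so every summand is bounded by $e^{-\alpha x (k-1)}$ and the sum by $k\, e^{-\alpha x(k-1)}$. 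Absorbing $\alpha$ into $C_{\mu,\widetilde c}$ produces the first inequality. The second inequality follows from $y e^{-y} \leq 1/e$ applied to $y = \alpha \ell(\ell+1) h (k-1)$, which trades the factor $\ell(\ell+1) h \cdot k$ against the exponential.

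For the integrated bounds \eqref{Peqiii} and \eqref{Peqvi} I would split, on each interval $[t_{j-1}, t_j]$,
\begin{equation*}
    e^{-\ell(\ell+1)(t_k - s)} - \xi^{k-j+\delta}
    = e^{-\ell(\ell+1)(t_k - t_{j-1+\delta})}\bigl(e^{\ell(\ell+1)(t_{j-1+\delta} - s)} - 1\bigr)
    + \bigl(e^{-\ell(\ell+1)(t_k - t_{j-1+\delta})} - \xi^{k-j+\delta}\bigr).
\end{equation*}
The first term is bounded by $C \ell(\ell+1) h \, e^{-\ell(\ell+1)(t_k - t_{j-1+\delta})}$ using $|e^y - 1| \leq C|y|$ for bounded $y$, and the second is controlled pointwise by \eqref{Peqi}. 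After squaring for \eqref{Peqiii} or keeping the linear form for \eqref{Peqvi} and summing over $j$, the geometric series $\sum_{j} e^{-c\, \ell(\ell+1) h (k-j)} \leq C/(\ell(\ell+1) h)$ (or its squared analogue) absorbs powers of $\ell(\ell+1)h$, and interpolating via $\ell(\ell+1)h \leq C_c$ produces exactly the exponents $(\ell(\ell+1))^{2\mu-1} h^{2\mu}$ in \eqref{Peqiii} and $(\ell(\ell+1))^{\mu-1} h^{\mu}$ in \eqref{Peqvi}.

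The main obstacle is keeping constants uniform across both schemes; the non-trivial point is that in the backward case $\xi = (1+x)^{-1} \geq e^{-x}$, so the naive bound $\xi^{k-1} \leq e^{-x(k-1)}$ fails, and one must use the weaker decay $\xi \leq e^{-x/(1+C_c)}$ described above. Since all the required pointwise and telescoped estimates already appear in the proofs of Propositions~4.1--4.3 and of Theorems~4.4 and~4.5 of \cite{lang2023}, which handle the Wiener case but rely only on deterministic semigroup approximation, I would in practice collect those pieces and verify that the grid restriction $\ell(\ell+1)h \leq C_c$ is the same here.
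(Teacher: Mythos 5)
The paper does not prove this proposition at all: it is stated as a summary of results imported verbatim from Propositions~4.1--4.3 and the proofs of Theorems~4.4 and~4.5 of \cite{lang2023}, so the ``proof'' in the paper is a citation, which you also offer as a fallback in your last sentence. Your self-contained sketch is sound and almost certainly reconstructs the cited arguments: the telescoping identity $e^{-xk}-\xi^k=(e^{-x}-\xi)\sum_{j=0}^{k-1}e^{-x(k-1-j)}\xi^j$ with the second-order Taylor bound $|e^{-x}-\xi|\leq Cx^2$ and the interpolation $x^2\leq C_c^{1-\mu}x^{1+\mu}$ gives \eqref{Peqi}, and the per-interval split plus geometric summation $\sum_j e^{-c\ell(\ell+1)h(k-j)}\leq C(\ell(\ell+1)h)^{-1}$ gives \eqref{Peqiii} and \eqref{Peqvi} with the stated exponents. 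Two small points deserve care. First, for the backward scheme $\xi=(1+x)^{-1}\geq e^{-x}$, so, as you correctly observe, the telescoped sum is only bounded by $k\,e^{-\widetilde c\,x(k-1)}$ with $\widetilde c=1/(1+C_c)<1$; this is exactly the $\widetilde c$ appearing in the constant $C_{\mu,\widetilde c}$, but it means the first inequality of \eqref{Peqi} as displayed (with $e^{-\ell(\ell+1)h(k-1)}$ and no $\widetilde c$ in the exponent) does not literally follow from your argument in the backward case --- only the weakened version with $\widetilde c$ in the exponent does, which is what is actually used downstream. Second, your reference point $t_{j-1+\delta}$ is off by one step: $\xi^{k-j+\delta}$ approximates $e^{-\ell(\ell+1)(t_k-t_{j-\delta})}$, so the natural anchor is $t_{j-\delta}$; with your choice the residual term compares exponents differing by one and needs an extra $O(\ell(\ell+1)h)$ correction of the same order as your first term, so the estimates survive but the bookkeeping is not quite as written. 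Neither point is a genuine gap; both are absorbed into the constant.
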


\begin{proof}
While \eqref{Peqiii} and \eqref{Peqvi} have been shown in~\cite{lang2023} by a combination of Propositions 4.1--4.3 and parts of the proofs of Theorems~4.4 and~4.5, we show \eqref{Peqi} by using
    \begin{equation*}
        |e^{-\ell(\ell+1)h\cdot k}-\xi_\ell(h)^k|
             	\leq C_{\mu} (\ell(\ell+1))^{1+\mu}h^{1+\mu}k e^{-\ell(\ell+1)h\cdot (k-1)}
    \end{equation*}
    from the same paper.
    The claim follows with 
   \begin{align*}
   	 |e^{-\ell(\ell+1)t_k} -\xi_\ell(h)^k|^a (1+\ell(\ell+1))^{-\rho}
   		& \le C (\ell(\ell+1))^{a+a\mu} h^{a\mu} t_k^a e^{-a\ell(\ell+1)t_{k-1}} (1+\ell(\ell+1))^{-\rho}\\
   		& \le C h^{a\mu} t_k^{\min\{a,\gamma+\rho\}-a\mu} (\ell(\ell+1))^{\min\{a-\rho,\gamma\}},
   \end{align*}
   since $\exp(a\ell(\ell+1)h) \le \exp(aC_c)$ and $x^{a\mu - \min\{a-\rho,\gamma\} + a -\rho} \exp(-x)$ is bounded.
\end{proof}

With all prerequisites at hand, we are now ready to prove first strong convergence in Theorem~\ref{Thm:ConvEM} and then weak convergence for the first and second moments in Theorem~\ref{Thm:ConvEM2} and in general in Theorem~\ref{weakrEM}.

\begin{theorem}\label{Thm:ConvEM}
Let $X^0 \in L^2(\Omega, H^{\gamma}(\S^2))$ for some $\gamma \ge 0$ and let $X^{(\kappa)}$ be given by~\eqref{eq:ApproxSpace} with Euler--Maruyama approximation~$X^{(\kappa,h)}$ defined by~\eqref{eq:EulerScheme}. Then the strong error in the time discretisation is bounded uniformly in~$\kappa$ by
    \begin{equation}
		\| X^{(\kappa)}(t_k) - X^{(\kappa,h)}(t_k) \|_{L^2(\Omega,L^2(\S^2))} \\
		\leq C \big(h^{\mu} t_k^{\min\{1,\gamma/2\}-\mu} \| X^0\|_{L^2(\Omega,H^{\min\{2,\gamma\}}(\S^2))} 
		+ h^{\min\{\beta/2,1\}}\big)
	\end{equation}
for $\mu \in (0,1]$, where $\beta = \eta +1$ under Assumption~\ref{ass:StrongLevyProcess} and $\beta < \alpha/2$ under Assumption~\ref{ass:SpecialLevyProcess}.
\end{theorem}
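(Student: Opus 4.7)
The plan is to split the error by the triangle inequality in $L^2(\Omega, L^2(\S^2))$ into the initial-condition contribution
\begin{equation*}
	E_{\mathrm{IC}} = \sum_{\ell=0}^\kappa\sum_{m=-\ell}^\ell (e^{-\ell(\ell+1)t_k}-\xi^k)\,X_{\ell,m}^0\,\realSH_{\ell,m}
\end{equation*}
and the noise contribution
\begin{equation*}
	E_{\mathrm{N}} = \sum_{\ell=0}^\kappa\sum_{m=-\ell}^\ell\sum_{j=1}^k\int_{t_{j-1}}^{t_j}\bigl(e^{-\ell(\ell+1)(t_k-s)}-\xi^{k-j+\delta}\bigr)\diff L_{\ell,m}(s)\,\realSH_{\ell,m},
\end{equation*}
which will be treated separately. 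Orthonormality of $\{\realSH_{\ell,m}\}$ reduces each $L^2(\S^2)$ norm to a sum over $(\ell,m)$, so the entire proof boils down to a per-mode estimate combined with the summability supplied by Assumption~\ref{ass:StrongLevyProcess} or~\ref{ass:SpecialLevyProcess}. Note that the $\ell=0$ modes vanish identically in both $E_{\mathrm{IC}}$ and $E_{\mathrm{N}}$, which removes the apparent singularity of the weights used below.

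For $E_{\mathrm{IC}}$, I would square~\eqref{Peqi} and use $hk=t_k$ to obtain
\begin{equation*}
	(e^{-\ell(\ell+1)t_k}-\xi^k)^2 \le C (\ell(\ell+1))^{2+2\mu}\,h^{2\mu}\,t_k^{\,2}\,e^{-2\ell(\ell+1)t_{k-1}}.
\end{equation*}
These exponents are then redistributed to match the target: $(1+\ell(\ell+1))^{\min\{2,\nu\}}$ is absorbed against $\E[(X_{\ell,m}^0)^2]$ into the $\|X^0\|_{L^2(\Omega,H^{\min\{2,\nu\}}(\S^2))}^2$ norm, the factor $h^{2\mu}t_k^{2(\min\{1,\nu/2\}-\mu)}$ is extracted, and the residual $(\ell(\ell+1)\,t_k)^{2+2\mu-\min\{2,\nu\}} e^{-2\ell(\ell+1)t_{k-1}}$ is bounded by a constant using $t_{k-1}\ge t_k/2$ for $k\ge 2$ together with $x^a e^{-x}\le C$ for $a\ge 0$. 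The boundary case $k=1$ uses the stability bound $\ell(\ell+1)h\le C_c$ in place of the missing exponential decay. Summing over $(\ell,m)$ against the Sobolev weight then produces the first term of the claimed bound.

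For $E_{\mathrm{N}}$, orthonormality and the second-moment formula for Lévy stochastic integrals (\cite[Example~15.12]{brockwell2024}, as already invoked in Lemma~\ref{L:MESol}) yield
\begin{align*}
	\E\bigl[\|E_{\mathrm{N}}\|_{L^2(\S^2)}^2\bigr]
	= \sum_{\ell=0}^\kappa\sum_{m=-\ell}^\ell\Bigl( & \varlm\sum_{j=1}^k\int_{t_{j-1}}^{t_j}\!(e^{-\ell(\ell+1)(t_k-s)}-\xi^{k-j+\delta})^2\diff s \\
	& + \meanlm^2\Bigl(\sum_{j=1}^k\int_{t_{j-1}}^{t_j}\!(e^{-\ell(\ell+1)(t_k-s)}-\xi^{k-j+\delta})\diff s\Bigr)^{\!2}\Bigr),
\end{align*}
after which~\eqref{Peqiii} applied to the first summand and the square of~\eqref{Peqvi} with $a=1$ applied to the second bound this by $C h^{2\mu}\sum_{\ell,m}\bigl(\varlm\,(\ell(\ell+1))^{2\mu-1}+\meanlm^2\,(\ell(\ell+1))^{2\mu-2}\bigr)$. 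Under Assumption~\ref{ass:StrongLevyProcess}, the series is finite as soon as $\mu\le(\eta+1)/2=\beta/2$ by $\|\std\|_{H^\eta(\S^2)}^2+\|\mean\|_{H^\eta(\S^2)}^2<\infty$; under Assumption~\ref{ass:SpecialLevyProcess} the same conclusion for $\mu\le\beta/2$ with any $\beta<\alpha/2$ follows directly from the computation in~\eqref{special:varconv}. Setting $\mu=\min\{1,\beta/2\}$ produces the claimed rate $h^{\min\{1,\beta/2\}}$. The most delicate point is the interpolation for $E_{\mathrm{IC}}$, where a single parameter $\mu$ must simultaneously deliver the Sobolev weight, the (possibly negative) temporal exponent $\min\{1,\nu/2\}-\mu$, and the order~$h^\mu$, while also accommodating the case $k=1$ in which the exponential decay $e^{-2\ell(\ell+1)t_{k-1}}$ degenerates to $1$.
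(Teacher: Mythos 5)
Your proposal is correct and follows essentially the same route as the paper: the same splitting into initial-condition and noise contributions, the same use of Proposition~\ref{prop:AuxExp} per mode, and the same summation against the Sobolev weights of $X^0$, $\std$, and $\mean$. The only (harmless) deviations are that you treat the mean part of the noise term with~\eqref{Peqvi} squared where the paper applies Jensen's inequality followed by~\eqref{Peqiii}, and you handle the degenerate exponential at $k=1$ via the coupling $\ell(\ell+1)h\le C_c$ where the paper absorbs it into the boundedness of $e^{2\ell(\ell+1)h}$ — both yield the identical rates.
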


\begin{proof}
    Plugging in the definitions of the schemes and computing the norm yields
    \begin{align*}
            & \| X^{(\kappa)}(t_k) - X^{(\kappa,h)}(t_k) \|^2_{L^2(\Omega,L^2(\S^2))} \\
            & \quad \leq 2 \sum_{\ell=0}^\kappa \sum_{m=-\ell}^\ell \Big((e^{-\ell(\ell+1)t_k} -\xi_\ell(h)^k)^2 \E[|X_{\ell,m}^0|^2] \\
            & \hspace*{8em} + \E\bigl[\big(\int_0^{t_k} e^{-\ell(\ell+1)(t_k-s)} \diff L_{\ell,m}(s) - \sum_{j=1}^k \xi_\ell(h)^{k-j+\delta} \Delta L_{\ell,m}(t_j) \big)^2\bigr]\Big).
    \end{align*}
   The first summand satisfies the claimed bound by~\eqref{Peqi} for $\rho = 0$ using the definition of the Sobolev norm.
   
To write the second term as a stochastic integral, we denote for $s\in[0,T] $ the projection to the previous grid point by $\underline{s} =\max\{t_k: k\in\{0,\ldots,n\}, t_k\leq s\}$. Applying \cite[Example 15.12]{brockwell2024} as earlier and Jensen's inequality, we obtain
\begin{align*}
	& \E\bigl[\big(\int_0^{t_k} e^{-\ell(\ell+1)(t_k-s)} \diff L_{\ell,m}(s) - \sum_{j=1}^k \xi_\ell(h)^{k-j+\delta} \Delta L_{\ell,m}(t_j) \big)^2\bigr]\\
	& \quad = \E\Big[   \Big| \int_{0}^{t_k} e^{-\ell(\ell+1)(t_k-s)} - \xi_\ell(h)^{k-(\underline{s}+h)h^{-1}+\delta} \diff L_{\ell,m}(s) \Big|^2\Big]\\
	& \quad \le \big(\varlm +  \meanlm^2 T \big) \sum_{j=1}^k\int_{t_{j-1}}^{t_j} \big(e^{-\ell(\ell+1)(t_k-s)} - \xi_\ell(h)^{k-j+\delta}\big)^2 \diff s.
\end{align*}
Setting $\mu = \min\{ ( \eta+1)/2,1\}$ in~\eqref{Peqiii} bounds the last expression by
\begin{equation*}
	\sum_{j=1}^k\int_{t_{j-1}}^{t_j} \big(e^{-\ell(\ell+1)(t_k-s)} - \xi_\ell(h)^{k-j+\delta}\big)^2 \diff s
	\le C (\ell(\ell+1))^{\min\{\eta,1\}} h^{2\min\{(\eta+1)/2,1\}}
\end{equation*}
 with the constant $C$ depending on $\eta$, and yields as bound for the second term using the definition of the Sobolev norms
\begin{align*}
	&  \sum_{\ell=0}^\kappa \sum_{m=-\ell}^\ell \E\bigl[\big(\int_0^t e^{-\ell(\ell+1)(t_k-s)} \diff L_{\ell,m}(s) - \sum_{j=1}^k \xi_\ell(h)^{k-j+\delta} \Delta L_{\ell,m}(t_j) \big)^2\bigr]\\
	&  \qquad \le C h^{ 2 \min\{(\eta+1)/2,1\}} \big( \|\std \|^2_{H^{ \eta}(\S^2)} + T \| \mean \|^2_{H^{ \eta}(\S^2)}\big).
\end{align*}
This finishes the proof under Assumption~\ref{ass:StrongLevyProcess}. The claim for Assumption~\ref{ass:SpecialLevyProcess} follows since $L$ in $L^2(\Omega,H^\eta(\S^2))$ for all $\eta < \alpha/2 -1$ by \eqref{special:varconv}.
\end{proof}

\begin{theorem}\label{Thm:ConvEM2}
 Let $\beta = \eta +1$ under Assumption~\ref{ass:StrongLevyProcess} and $\beta < \alpha/2$ under Assumption~\ref{ass:SpecialLevyProcess}.
Given the spectral approximation~$X^{(\kappa)}$ in~\eqref{eq:ApproxSpace} and the fully discrete approximation~$X^{(\kappa,h)}$ in~\eqref{eq:EulerScheme} with $X^0 \in L^2(\Omega, H^{\gamma}(\S^2))$ for some $\gamma \ge 0$,  and $\mu \in (0,1]$, the error of the mean is bounded uniformly in~$\kappa$ by
\begin{align*}
	\|\E[X^{(\kappa)}(t_k) - X^{(\kappa,h)}(t_k)]\|_{L^2(\S^2)} 
	&\leq  C h^{\mu} (t_k^{\min\{1, \gamma/2\} - \mu} \| \E[X^0]\|_{H^{\min\{2, \gamma\}}(\S^2)}
	+ t_k^{\min\{1,(\beta+1)/2\}-\mu} ).
\end{align*}
Furthermore, the error of the second moment is bounded uniformly in~$\kappa$ by 
\begin{equation*}
	|\E[\|X^{(\kappa)}(t)\|^2_{L^2(\S^2)} - \|X^{(\kappa,h)}(t)\|^2_{L^2(\S^2)} ]|
	\leq C (h^{\mu} t_k^{\min\{1, \gamma\}-\mu} \| X^0\|^2_{L^2(\Omega,H^{\min\{1, \gamma\}}(\S^2))}
	+ h^{\min\{\beta, 1\}}).
\end{equation*}
\end{theorem}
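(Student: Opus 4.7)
The plan is to subtract the explicit moment formulas~\eqref{eq:expect_spectral}, \eqref{eq:expect_EM} for the mean and~\eqref{eq:second-moment_spectral}, \eqref{eq:second-mom_EM} for the second moment, and then estimate mode-by-mode using Proposition~\ref{prop:AuxExp}. A geometric-sum identity valid for both the forward and the backward schemes will reduce nearly every contribution to the single scalar $|e^{-\ell(\ell+1)t_k}-\xi^k|$ controlled by~\eqref{Peqi}, after which the same interpolation trick as in the strong-error proof of Theorem~\ref{Thm:ConvEM} yields the stated $t_k$-weighted bounds.

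\textbf{Mean error.} First I would evaluate the geometric sum: for $\ell \ge 1$ and both $\xi = 1-\ell(\ell+1)h$, $\delta = 0$ and $\xi = (1+\ell(\ell+1)h)^{-1}$, $\delta = 1$, a direct computation gives $h\sum_{j=1}^k\xi^{k-j+\delta} = (1-\xi^k)/(\ell(\ell+1))$, and hence
\[
    \int_0^{t_k}e^{-\ell(\ell+1)(t_k-s)}\,\diff s - h\sum_{j=1}^k\xi^{k-j+\delta} = \frac{\xi^k - e^{-\ell(\ell+1)t_k}}{\ell(\ell+1)}.
\]
Subtracting~\eqref{eq:expect_EM} from~\eqref{eq:expect_spectral} and using orthonormality, the $L^2(\S^2)$-norm squared factors as $\sum_{\ell,m}|e^{-\ell(\ell+1)t_k}-\xi^k|^2\,|\E[X^0_{\ell,m}] - \meanlm/(\ell(\ell+1))|^2$, which the triangle inequality splits into an $\E[X^0_{\ell,m}]$-part and a $\meanlm/(\ell(\ell+1))$-part. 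The first inequality of~\eqref{Peqi} with $kh=t_k$, together with the standard interpolation $(\ell(\ell+1))^\gamma e^{-2\ell(\ell+1)t_{k-1}} \le Ct_{k-1}^{-\gamma}$ for $\gamma\ge 0$ and $t_{k-1}\gtrsim t_k$ for $k\ge 2$, handles both parts. Taking $\gamma = 2+2\mu-\min\{2,\nu\}$ for the first reproduces the strong-error calculation of Theorem~\ref{Thm:ConvEM} and yields $Ch^\mu t_k^{\min\{1,\nu/2\}-\mu}\|\E[X^0]\|_{H^{\min\{2,\nu\}}(\S^2)}$; for the second, the extra factor $1/(\ell(\ell+1))^2$ in the sum permits the larger choice $\gamma = 2+2\mu-\min\{2,\beta+1\}$, so the weight $(\ell(\ell+1))^{\min\{2,\beta+1\}-2}$ collapses against $\|\mean\|_{H^\eta(\S^2)}^2$ and produces the remaining term $Ch^\mu t_k^{\min\{1,(\beta+1)/2\}-\mu}$.

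\textbf{Second moment.} Subtracting~\eqref{eq:second-mom_EM} from~\eqref{eq:second-moment_spectral} isolates a variance part, $\sum_{\ell,m}\varlm\bigl(\int_0^{t_k}e^{-2\ell(\ell+1)(t_k-s)}\diff s - h\sum_j\xi^{2(k-j+\delta)}\bigr)$, bounded directly by~\eqref{Peqvi} with $a=2$ and $\mu = \min\{\beta,1\}$ to give the $h^{\min\{\beta,1\}}\|\std\|_{H^\eta(\S^2)}^2$ contribution, and a square part, $\sum_{\ell,m}\E[(X^0_{\ell,m}e^{-\ell(\ell+1)t_k}+\meanlm I_c)^2 - (X^0_{\ell,m}\xi^k+\meanlm S)^2]$ with $I_c=(1-e^{-\ell(\ell+1)t_k})/(\ell(\ell+1))$ and $S=(1-\xi^k)/(\ell(\ell+1))$. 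I would expand the square part as $(a^2-c^2)+(b^2-d^2)+2(ab-cd)$; by the identity above and $|I_c+S|\le 2/(\ell(\ell+1))$, each of the three sub-terms reduces to $|e^{-\ell(\ell+1)t_k}-\xi^k|$ multiplied by a factor $1/(\ell(\ell+1))^j$ with $j\in\{0,1,2\}$. Applying~\eqref{Peqi} only once — not squared as for the mean, which is why the unsquared Sobolev exponent $\min\{1,\nu\}$ replaces $\min\{1,\nu/2\}$ — to the $(X^0_{\ell,m})^2$-weighted sum with $\gamma=1+\mu-\min\{1,\nu\}$ yields $Ch^\mu t_k^{\min\{1,\nu\}-\mu}\|X^0\|^2_{L^2(\Omega,H^{\min\{1,\nu\}}(\S^2))}$. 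The $\meanlm^2$-weighted and cross-term contributions (the latter bounded via AM--GM by $2|\E[X^0_{\ell,m}]||\meanlm|/(\ell(\ell+1)) \le \E[(X^0_{\ell,m})^2] + \meanlm^2/(\ell(\ell+1))^2$) are then absorbed into the $h^{\min\{\beta,1\}}$ bound using the uniform second inequality of~\eqref{Peqi} with $\mu=\min\{\beta,1\}$.

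\textbf{Main obstacle.} The principal difficulty will be the bookkeeping: a single $\mu \in (0,1]$ must simultaneously parameterise all sub-estimates, the distinct Sobolev exponents $\min\{2,\nu\}$ (squared use of~\eqref{Peqi}) and $\min\{1,\nu\}$ (unsquared use) must be kept straight, and the starting step $k=1$ with $t_{k-1}=0$ is covered by a direct uniform bound since $t_1=h$ makes the desired $h^\mu t_k^{\cdot-\mu}$ target automatic. Beyond these technical points, the argument is a direct adaptation of the Wiener-case proof in~\cite[Theorems~4.4, 4.5]{lang2023}, with the non-zero Lévy drift $\mean$ contributing the new terms handled precisely by the extra $1/(\ell(\ell+1))$ factor provided by the geometric-sum identity.
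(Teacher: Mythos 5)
Your proposal is correct and follows essentially the same route as the paper's proof: subtracting the explicit moment formulas, reducing the drift contributions via the geometric-sum identity $\int_0^{t_k}e^{-\ell(\ell+1)(t_k-s)}\diff s-h\sum_{j=1}^k\xi^{k-j+\delta}=(\ell(\ell+1))^{-1}(\xi^k-e^{-\ell(\ell+1)t_k})$, bounding the variance term with~\eqref{Peqvi} and everything else with~\eqref{Peqi} plus the interpolation $(\ell(\ell+1))^\gamma e^{-\ell(\ell+1)t_{k-1}}\leq Ct_k^{-\gamma}$. The only cosmetic deviation is expanding the difference of squares termwise rather than via $(A-B)(A+B)$ as in the paper, which leads to the same estimates.
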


\begin{proof}
To compute the error of the expection, we plug in the representations~\eqref{eq:expect_spectral} and~\eqref{eq:expect_EM} and obtain
    \begin{align*}
            &\|\E[X^{(\kappa)}(t_k) - X^{(\kappa,h)}(t_k) ]\|^2_{L^2(\S^2)}\leq 2 \sum_{\ell=0}^\kappa \sum_{m=-\ell}^\ell \Bigl(\big( e^{-\ell(\ell+1)t_k} -  \xi_\ell(h)^k \big)^2 \, \E[X_{\ell,m}^0]^2\\
            &\hspace{17em}+ \Big( \int_0^{t_k}  e^{-\ell(\ell+1)(t_k-s)} \diff s - \sum_{j=1}^k \xi_\ell(h)^{k-j+\delta} h \Big)^2 \meanlm^2 \Bigr). 
    \end{align*}
    The bound for the first summand follows as in Theorem~\ref{Thm:ConvEM}.
    	For the second summand, we observe computing the integral and the geometric series that
    \begin{equation}\label{eqn:convolution_difference}
		\Big(\int_0^{t_k}  e^{-\ell(\ell+1)(t_k-s)} \diff s - \sum_{j=1}^k \xi_\ell(h)^{k-j+\delta} h \Big)^2
			= (\ell(\ell+1))^{-2} (e^{-\ell(\ell+1)t_k}- \xi_\ell(h)^k)^2.
	\end{equation}
    Therefore, the second term is equal to the first with coefficients $(\meanlm/(\ell(\ell+1)))^2$, which yields the claim when $\gamma$ is set to $\eta+2$.
    As in the previous theorem, the result for Assumption~\ref{ass:SpecialLevyProcess} follows since $L \in L^2(\Omega,H^\eta(\S^2))$ for all $\eta < \alpha/2 -1$ by \eqref{special:varconv}.
    
    For the second moment we plug in \eqref{eq:second-moment_spectral} and \eqref{eq:second-mom_EM} to obtain using for the non-variance term that $(a^2-b^2) = (a-b)(a+b)$,
    \begin{align*}
 		&\Big| \E\Big[\| X^{(\kappa)}(t_k)\|^2_{L^2(\S^2)}  -\| X^{(\kappa,h)}(t_k) \|^2_{L^2(\S^2)} \Big] \Big|\\
 		&\quad \leq \sum_{\ell=0}^\kappa \sum_{m=-\ell}^\ell \varlm \Big|\int_0^{t_k} e^{-2\ell(\ell+1)(t_k-s)} \diff s  - h \sum_{j=1}^k \xi_\ell(h)^{2(k-j+\delta)}  \Big|\\
 		& \hspace*{3em} + \Big| \E\Big[ \Big( X^0_{\ell,m} \big( e^{-\ell(\ell+1)t_k} -\xi_\ell(h)^{k}\big) + \meanlm \Big( \int_0^{t_k} e^{-\ell(\ell+1)(t_k-s)} \diff s  - h \sum_{j=1}^k \xi_\ell(h)^{k-j+\delta}\Big) \Big)\\
            & \hspace*{5em} \cdot \Big( X^0_{\ell,m} \big( e^{-\ell(\ell+1)t_k} +\xi_\ell(h)^{k}\big) + \meanlm \Big( \int_0^{t_k} e^{-\ell(\ell+1)(t_k-s)} \diff s  + h \sum_{j=1}^k \xi_\ell(h)^{k-j+\delta}\Big) \Big) \Big] \Big|
    \end{align*} 
    Note that $e^{-\ell(\ell+1)}\leq 1$ and $\xi_\ell(h)\leq 1$, hence we use these estimates whenever possible and obtain with $ab \le a^2 + b^2$ that
    \begin{align*}
 		&\Big| \E\Big[\| X^{(\kappa)}(t_k)\|^2_{L^2(\S^2)}  -\| X^{(\kappa,h)}(t_k) \|^2_{L^2(\S^2)} \Big] \Big|\\
 		& \quad \leq \sum_{\ell=0}^\kappa \sum_{m=-\ell}^\ell \varlm \Big|\int_0^{t_k} e^{-2\ell(\ell+1)(t_k-s)} \diff s  - h \sum_{j=1}^k \xi_\ell(h)^{2(k-j+\delta)}  \Big|\\
            &\qquad+ C \sum_{\ell=0}^\kappa \sum_{m=-\ell}^\ell \E[ |X_{\ell,m}^0|^2]  \big| e^{-\ell(\ell+1)t_k} -\xi_\ell(h)^{k}\big|   \\
            &\qquad + C \sum_{\ell=0}^\kappa \sum_{m=-\ell}^\ell \Big( \E[ |X_{\ell,m}^0|]^2 + \meanlm^2  \Big) \big| e^{-\ell(\ell+1)t_k} -\xi_\ell(h)^{k}\big|\\
            &\hspace{7cm} \cdot\Big| \int_0^{t_k} e^{-\ell(\ell+1)(t_k-s)} \diff s  + h \sum_{j=1}^k \xi_\ell(h)^{k-j+\delta}\Big| \displaybreak[3] \\
            &\qquad+ C \sum_{\ell=0}^\kappa \sum_{m=-\ell}^\ell \Big( \E[ |X_{\ell,m}^0|]^2 + \meanlm^2 \Big) \Big| \int_0^{t_k} e^{-\ell(\ell+1)(t_k-s)} \diff s  - h \sum_{j=1}^k \xi_\ell(h)^{k-j+\delta}\Big|.
    \end{align*} 
To resort and simplify this expression, we compute with the definition of $\xi_\ell(h)$ and $\delta$
and the bounds on $\ell (\ell+1)h$ that
\begin{align*}
		&\Big| \int_0^{t_k} e^{-\ell(\ell+1)(t_k-s)} \diff s  + h \sum_{j=1}^k \xi_\ell(h)^{k-j+\delta}\Big| \\
		& \quad \leq \Big| \int_0^{t_k} e^{-\ell(\ell+1)(t_k-s)} \diff s \Big| + \Big| h \sum_{j=1}^k \xi_\ell(h)^{k-j+\delta}\Big|
		\leq C (\ell(\ell+1))^{-1}
\end{align*}
and use~\eqref{eqn:convolution_difference}. This yields
\begin{align*}
 	&\Big| \E\Big[\| X^{(\kappa)}(t_k)\|^2_{L^2(\S^2)}  -\| X^{(\kappa,h)}(t_k) \|^2_{L^2(\S^2)} \Big] \Big|\\
		&\quad \leq \sum_{\ell=0}^\kappa \sum_{m=-\ell}^\ell \varlm \Big|\int_0^{t_k} e^{-2\ell(\ell+1)(t_k-s)} \diff s  - h \sum_{j=1}^k \xi_\ell(h)^{2(k-j+\delta)}  \Big|\\
            &\qquad + C \sum_{\ell=1}^\kappa \sum_{m=-\ell}^\ell \Bigl(\E[ |X_{\ell,m}^0|^2] + (\ell(\ell+1))^{-1} (\E[ |X_{\ell,m}^0|]^2 + \meanlm^2)\Bigr)  \big| e^{-\ell(\ell+1)t_k} -\xi_\ell(h)^{k}\big|.
\end{align*}
Since $\E[ |X_{\ell,m}^0|]^2$ and $\meanlm^2$ are smoothed with $(\ell(\ell+1))^{-1}$, convergence is dominated by the remaining two terms. While the claim for the first one follows with \eqref{Peqvi}, the second one is bounded by \eqref{Peqi} with $\rho=0$ by
    \begin{equation*}
	\sum_{\ell=0}^\kappa \sum_{m=-\ell}^\ell\big| e^{-\ell(\ell+1)t_k} -\xi_\ell(h)^{k}\big| \E[ |X_{\ell,m}^0|^2]
	\leq C h^{\mu} t_k^{\min\{1, \gamma\}-\mu} \| X^0\|^2_{L^2(\Omega,H^{\min\{1, \gamma\}}(\S^2))},
\end{equation*}
with the constant $C$ depending $\gamma$.
\end{proof}

With similar techniques as in Section~\ref{Sec:SpectApp}, we are able to generalize our weak convergence results to a wider class of test functions~$\varphi$ in what follows, which coincides with that for the semi-discrete approximation since $X^{(\kappa,h)}$ satisfies the same regularity estimates as~$X$, see~\eqref{eq:regEM}.
With this, we are able to state our weak convergence result.

\begin{theorem}\label{weakrEM}
	Assume that $X^{(\kappa)}$ and $X^{(\kappa,h)}$ are given by~\eqref{eq:ApproxSpace} and~\eqref{eq:EulerScheme}, respectively, and $\varphi$ satisfies Assumption~\ref{ass:TestFunc} with $X$ replaced by $X^{(\kappa,h)}$ for some $\rho \ge 0$. Let $\mu\in(0,1]$, then the weak error is bounded uniformly in~$\kappa$ by
	\begin{align*}
		&\left|\mathbb{E}\left[ \varphi(X^{(\kappa)}(t_k)) - \varphi(X^{(\kappa,h)}(t_k))\right]\right|\\
		&\quad \leq C h^{\mu} t_k^{\min\{1,(\gamma+\rho)/2\}-\mu}  \| X^0\|_{L^2(\Omega,H^{\min\{2-\rho,\gamma\}}(\S^2))} 
		+ C h^{\min\{(\beta+\rho)/2,1\}},
	\end{align*}
	where $\beta = \eta+1$ under Assumption~\ref{ass:StrongLevyProcess} and $\beta < \alpha/2$ under Assumption~\ref{ass:SpecialLevyProcess}.
\end{theorem}

\begin{proof}
Due to the same arguments as in the proof of Theorem~\ref{weakr}, we only need to bound the strong error in the weaker $H^{-\rho}(\S^2)$ norm. This gives in the proof of Theorem~\ref{Thm:ConvEM} new weights of the form $(1+\ell(\ell+1))^{-\rho}$. Bounding the term based on the initial condition with $a=2$ in \eqref{Peqi}, and choosing $\mu = \min\{(\eta+1+\rho)/2,1\}$ in the estimates of the stochastic term yields the claim of improved convergence rates of up to~$\rho/2$. This proves the claim under Assumption~\ref{ass:StrongLevyProcess} and in combination with \eqref{special:varconv} under Assumption~\ref{ass:SpecialLevyProcess}.
\end{proof}

Based on Theorem~\ref{weakrEM} and the regularity estimates in Proposition~\ref{prop:regul}, we conclude analogously to Corollary~\ref{weakr_spec}.

\begin{corollary}\label{weakr_specEM}
	Assume that $X^{(\kappa)}$ and $X^{(\kappa,h)}$ are given by~\eqref{eq:ApproxSpace} and~\eqref{eq:EulerScheme}, respectively, so that $X$ satisfies Assumption~\ref{ass:StrongLevyProcess} with $L=W$ being a $Q$-Wiener process or $L\in  L^p(\Omega,H^\eta(\S^2))$ with independent $(L_{\ell,m})_{\ell,m}$. Then, for $\mu\in(0,1]$, for all test functions~$\varphi$ satisfying \eqref{eq:test_function-reg-poly} with $q=p/2$ and $\rho = \gamma = \eta+1$, $X^{(\kappa,h)}$ converges weakly to~$X^{(\kappa)}$ with error bounded uniformly in~$\kappa$ by
	\begin{align*}
		\left|\E\left[ \varphi(X^{(\kappa)}(t_k)) - \varphi(X^{(\kappa,h)}(t_k))\right]\right|
        \leq C \Big(h^{\mu} t_k^{\min\{1,\gamma\}-\mu}  \| X^0\|_{L^2(\Omega,H^{\gamma}(\S^2))}
		+ h^{\min\{\eta+1,1\}}\Big).
	\end{align*}
	If instead $L$ satisfies Assumption~\ref{ass:NoMeanLevyProcess}, for all $\rho = \gamma < \eta + 2/p=\eta+1/q$, the error is uniformly bounded by
	\begin{align*}
		\left|\E\left[ \varphi(X^{(\kappa)}(t_k)) - \varphi(X^{(\kappa,h)}(t_k))\right]\right|
		\leq C \Big(h^{\mu} t_k^{\min\{1,\gamma\}-\mu}  \| X^0\|_{L^2(\Omega,H^{\gamma}(\S^2))} 
		+ h^{\min\{(\eta+1+\rho)/2,1\}}\Big).
	\end{align*}
\end{corollary}

As in the spectral case, we summarize results for the fully discrete scheme with truncation parameter $\kappa$ and time step $h$. By Theorem~\ref{Thm:ConvEM}, the strong convergence rate is $h^{\min\{\beta/2,1\}}$, where $\beta=\eta+1$ under Assumption~\ref{ass:StrongLevyProcess} and $\beta < \alpha/2$ under Assumption~\ref{ass:SpecialLevyProcess}. Weakly, Theorem~\ref{weakrEM} and Corollary~\ref{weakr_specEM} improve this to $h^{\min\{(\eta+1+\rho)/2,1\}}$, where $\rho < \eta + 2/p$ in the general setting and $\rho \le \eta + 1$ in for Wiener noise or independent series expansions, i.e., we obtain twice the strong rates. The dependence of the weak rates on the assumptions mirrors the spectral case. 

\section{Numerical simulation}\label{sec:Num}

We are now ready to support the theoretical conclusions from Sections \ref{Sec:SpectApp} and \ref{Sec:4} with numerical experiments. Specifically, we will compare the convergence rates of various errors for the spectral approximation, as well as for the forward and backward Euler–Maruyama schemes.

\begin{figure}[t]
    \centering
    \begin{subfigure}[b]{0.24\textwidth}
        \includegraphics[width=\textwidth]{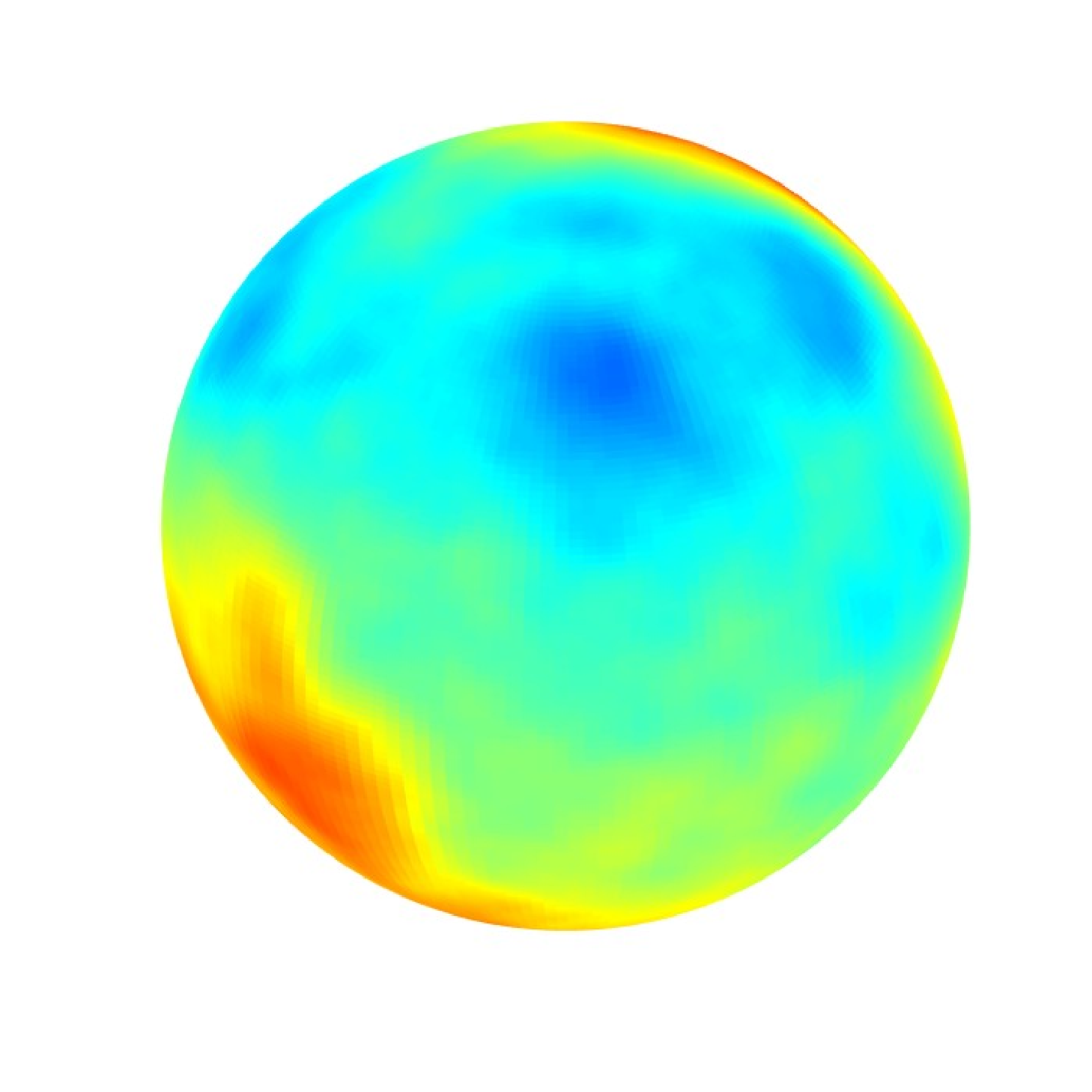}
        \caption{t=1}
        \label{fig:LP1-1}
    \end{subfigure}
    \begin{subfigure}[b]{0.24\textwidth}
        \includegraphics[width=\textwidth]{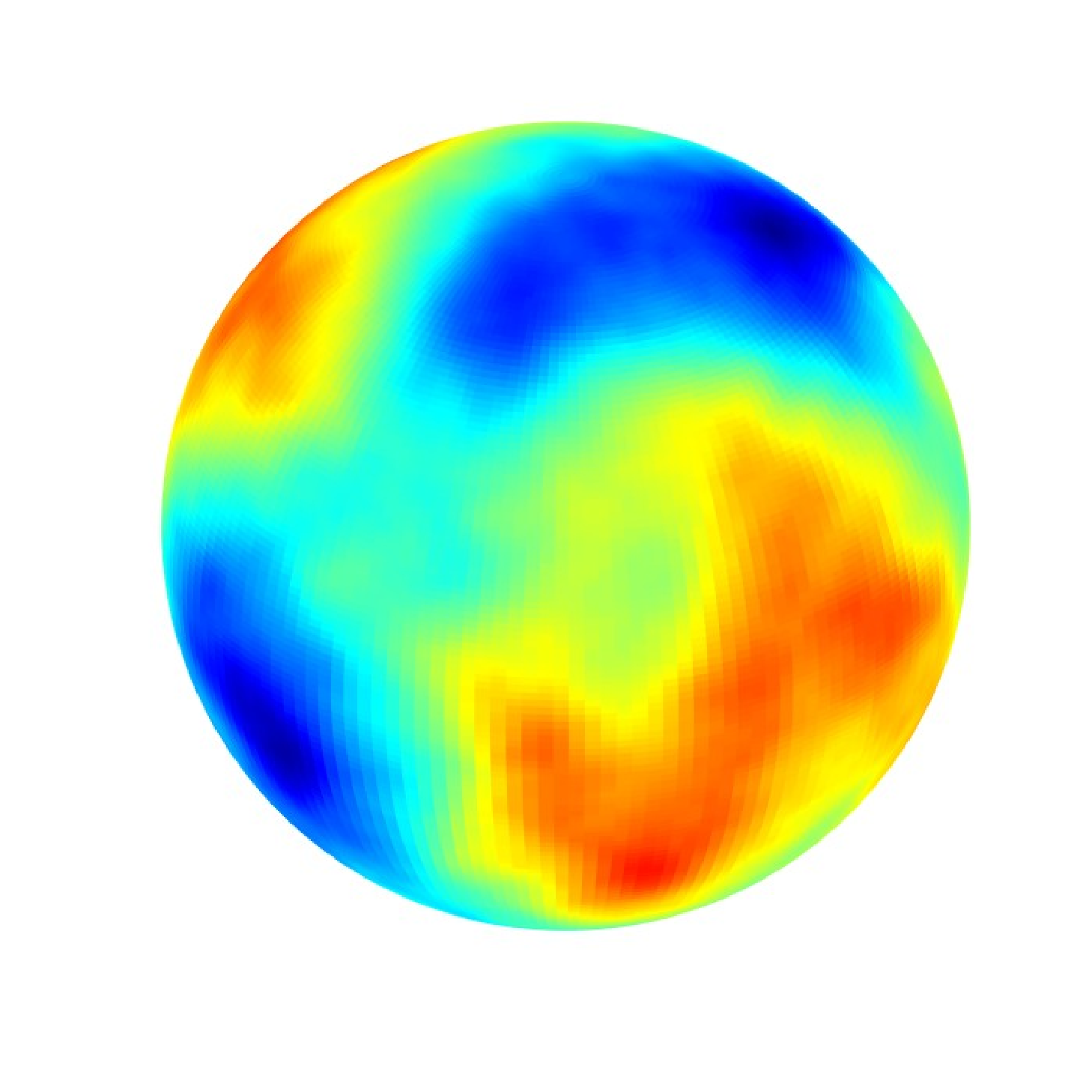}
        \caption{t=1}
        \label{fig:MLP3-2a}
    \end{subfigure}
    \begin{subfigure}[b]{0.24\textwidth}
        \includegraphics[width=\textwidth]{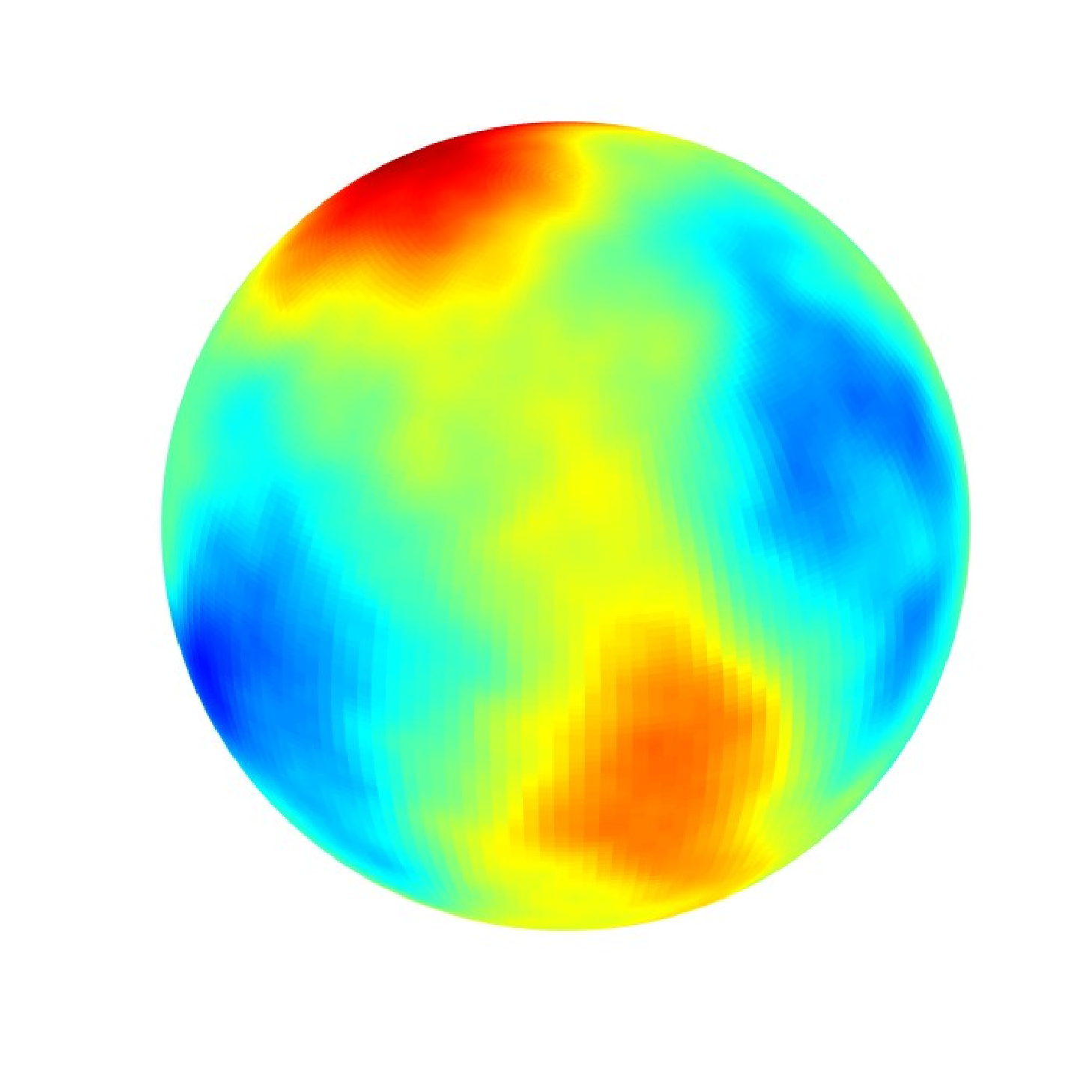}
        \caption{t=1}
        \label{fig:LP3-3a}
    \end{subfigure}
    \begin{subfigure}[b]{0.24\textwidth}
        \includegraphics[width=\textwidth]{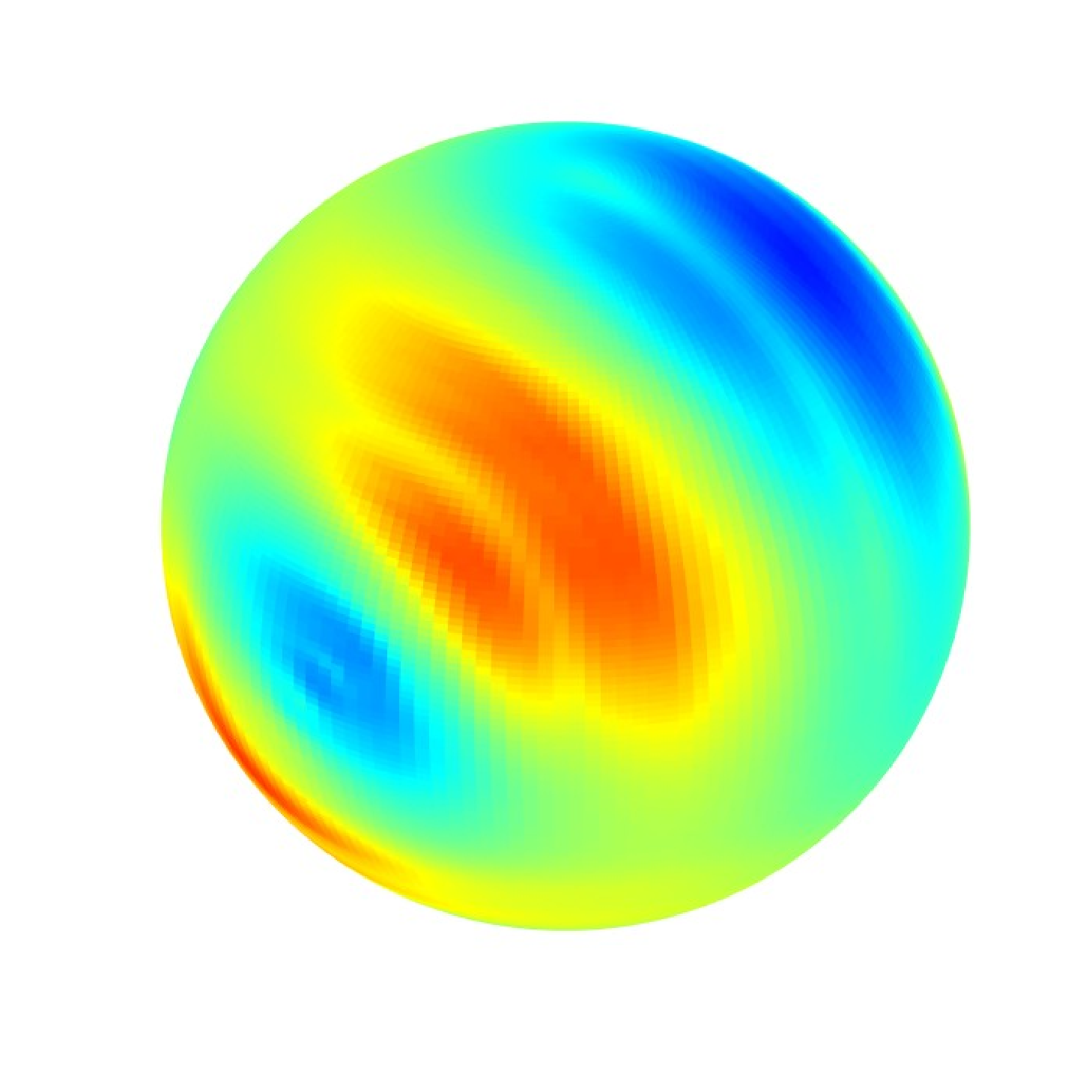}
        \caption{t=1}
        \label{fig:LP3-4a}
    \end{subfigure}
    \caption{Realization of different L\'evy processes $L$ on the sphere: (1) Wiener process, (2) Poisson process, (3) sum of a Wiener and a Poisson process, (4)  Poisson processes that are identical across all $m$ for a fixed $\ell$, but independent across different $\ell$.}
    \label{fig:sample}
\end{figure}

Although our theorems do not require assuming any particular structure in the decomposition components $L_{\ell,m}$ of our Lévy process~$L$, for the purpose of numerical simulation, we restrict ourselves to Assumption~\ref{ass:SpecialLevyProcess}. Specifically, we assume that $L_{\ell,m} = \ell^{-\alpha}\hat L_{\ell,m}$ for a given~$\alpha$, where the processes $\hat L_{\ell,m}$ are independent and identically distributed. Note that for the strong rates by our theory, it would be enough to assume identically distributed processes~$\hat L_{\ell,m}$. Indeed the strong convergence rate simulations obtain similar results assuming some correlation. However, we decided to present results based on independent noise processes here, such that also the assumptions for the weak convergence results are satisfied. In Figure~\ref{fig:sample}, we show realizations of such L\'evy processes~$L(t)$ with Brownian component, Poisson component, a mixture of the two and a special case in which the real-valued processes $(L_{\ell,m})_{\ell,m}$ are Poisson processes that are identical across all~$m$ for a fixed $\ell$, but independent across different $\ell$. In all cases, the intensity of the Poisson processes is~$1$.
More so, using the exponential transformation $\exp(L(t))$ on~$\S^2$, as in \cite{lang2015, cohen2022,lang2023}, Figure \ref{fig:SHEsample} shows realizations of the evolving stochastic noises through time. For these realizations, we set $\alpha=3$ and add for the combination of the  Wiener and Poisson processes the same samples of the separate processes.

\begin{figure}[H]
    \centering
    \begin{subfigure}[b]{0.24\textwidth}
        \includegraphics[width=\textwidth]{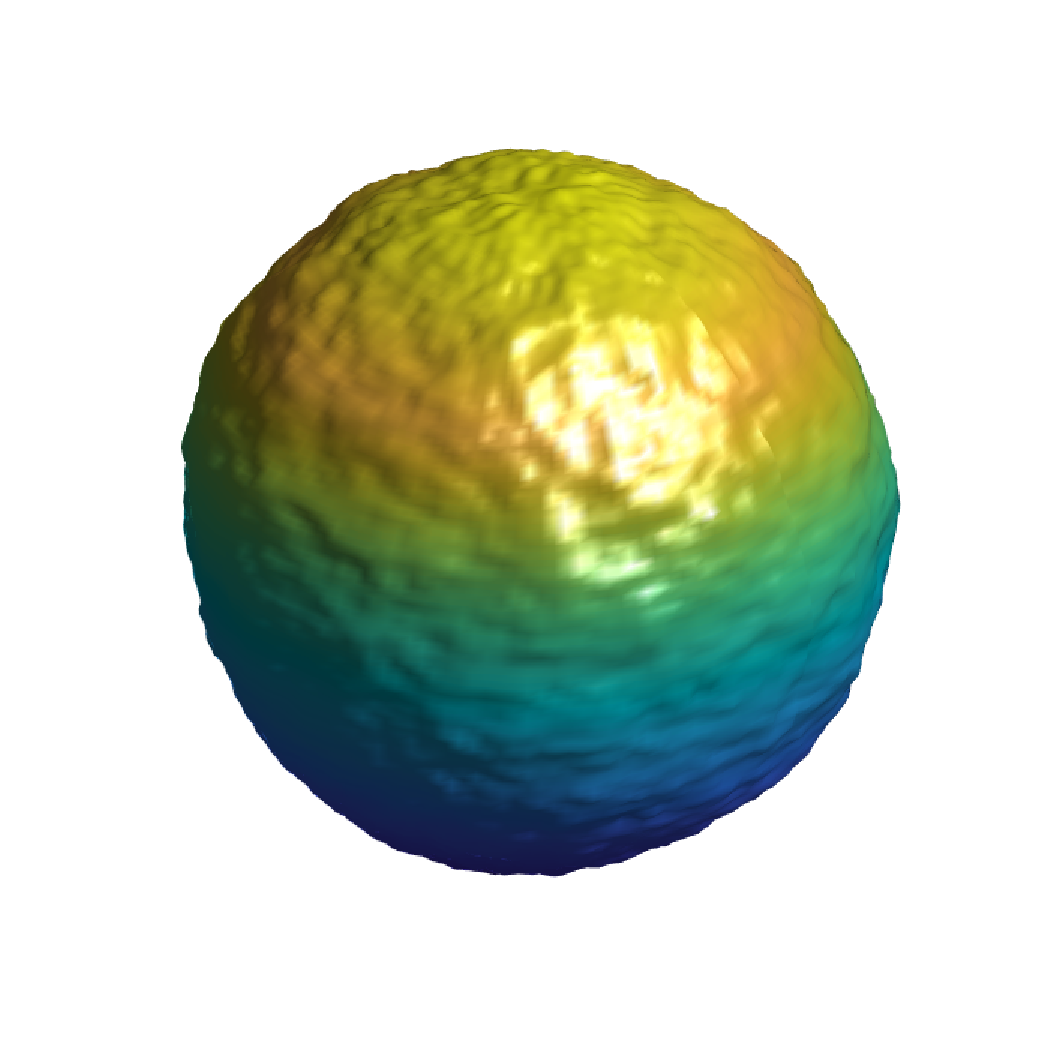}
        \caption{t = 0.0005}
        \label{fig:LP3-11}
    \end{subfigure}
    \begin{subfigure}[b]{0.24\textwidth}
        \includegraphics[width=\textwidth]{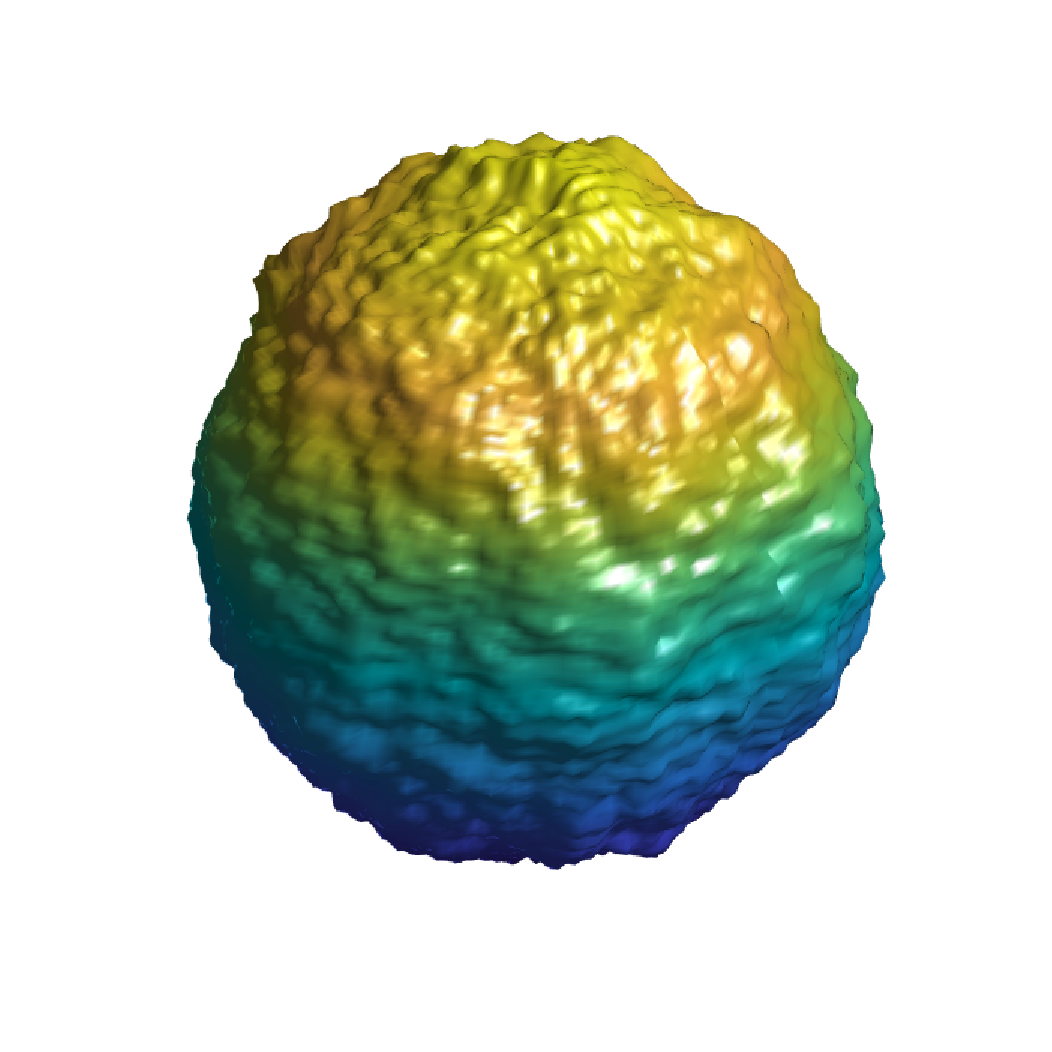}
        \caption{t = 0.0025}
        \label{fig:MLP3-2k}
    \end{subfigure}
    \begin{subfigure}[b]{0.24\textwidth}
        \includegraphics[width=\textwidth]{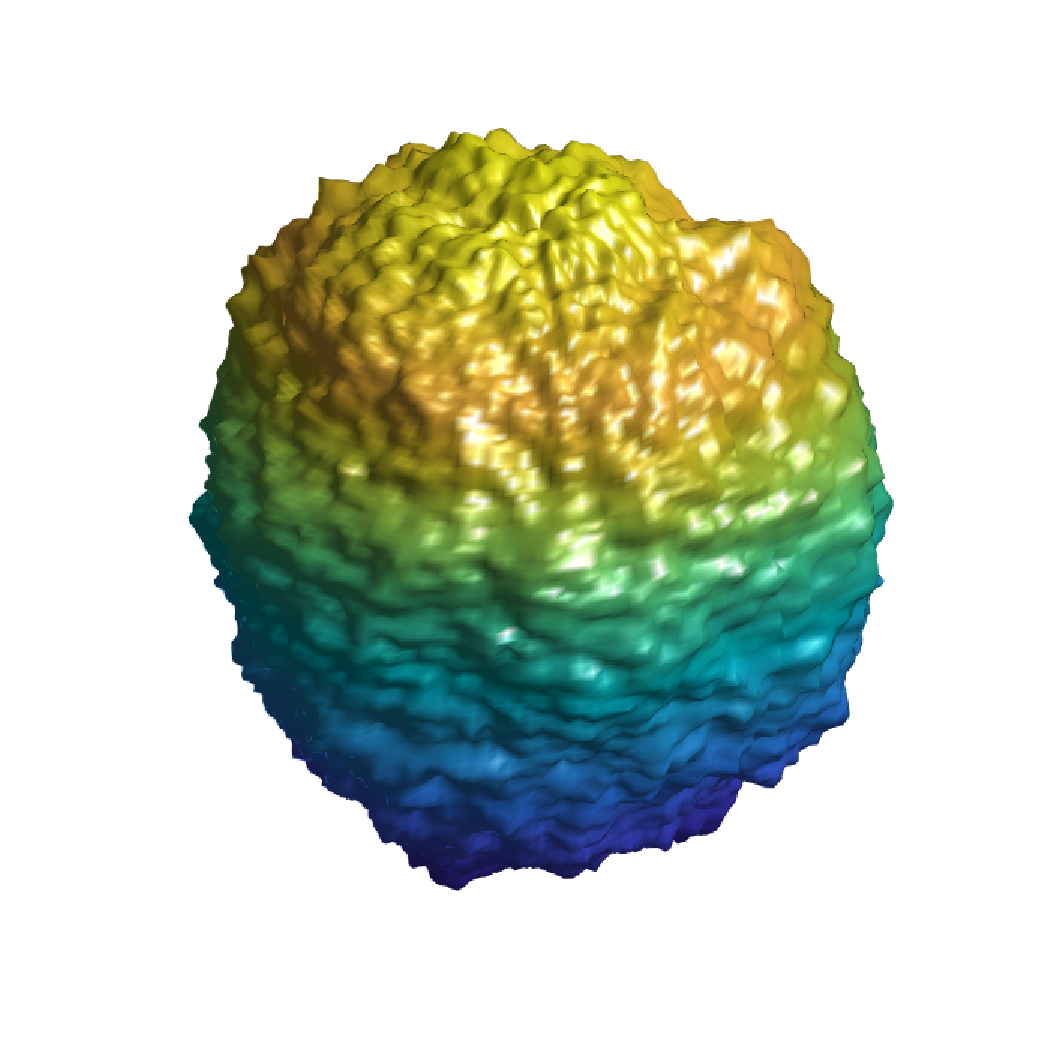}
        \caption{t= 0.005}
        \label{fig:LP3-3k}
    \end{subfigure}
    \begin{subfigure}[b]{0.24\textwidth}
        \includegraphics[width=\textwidth]{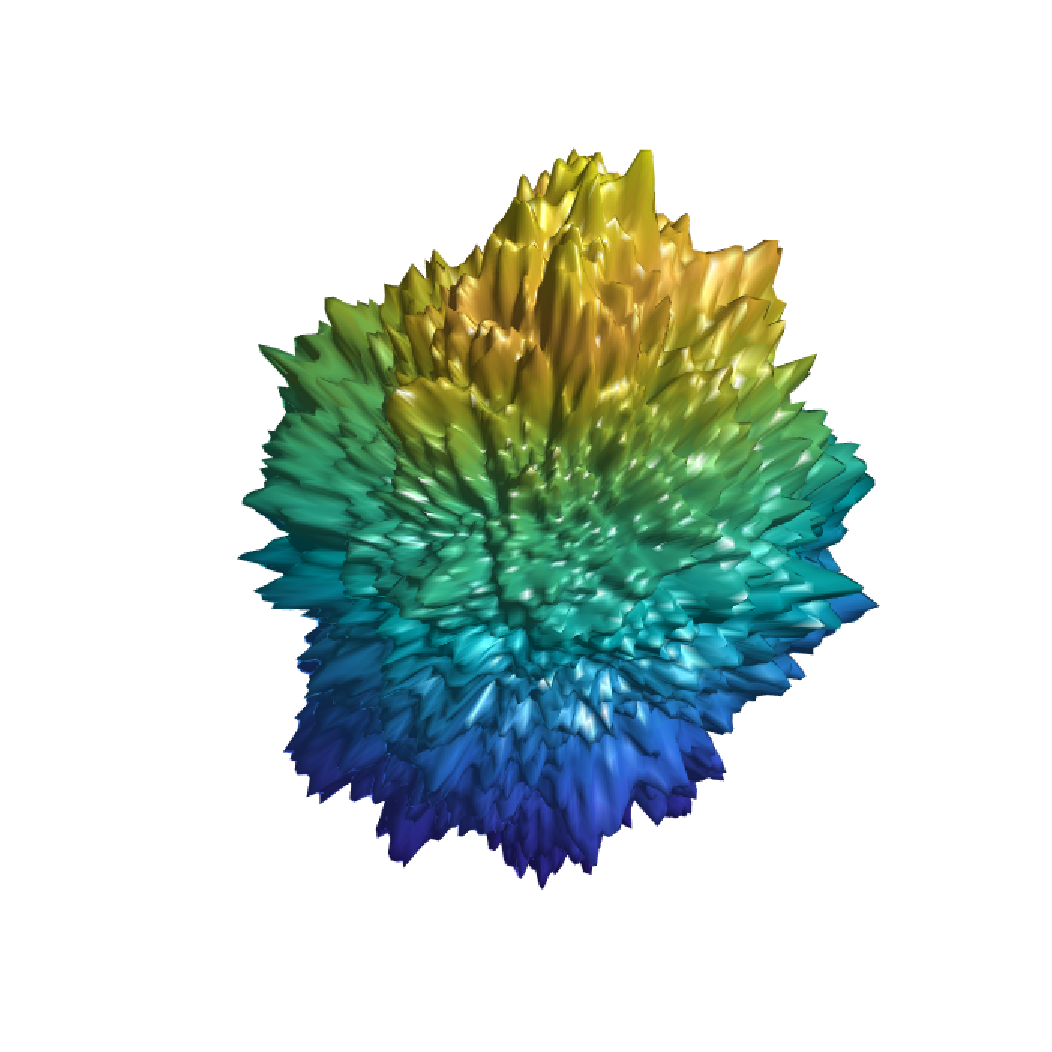}
        \caption{t= 0.05}
        \label{fig:LP3-4k}
    \end{subfigure}
    \begin{subfigure}[b]{0.24\textwidth}
        \includegraphics[width=\textwidth]{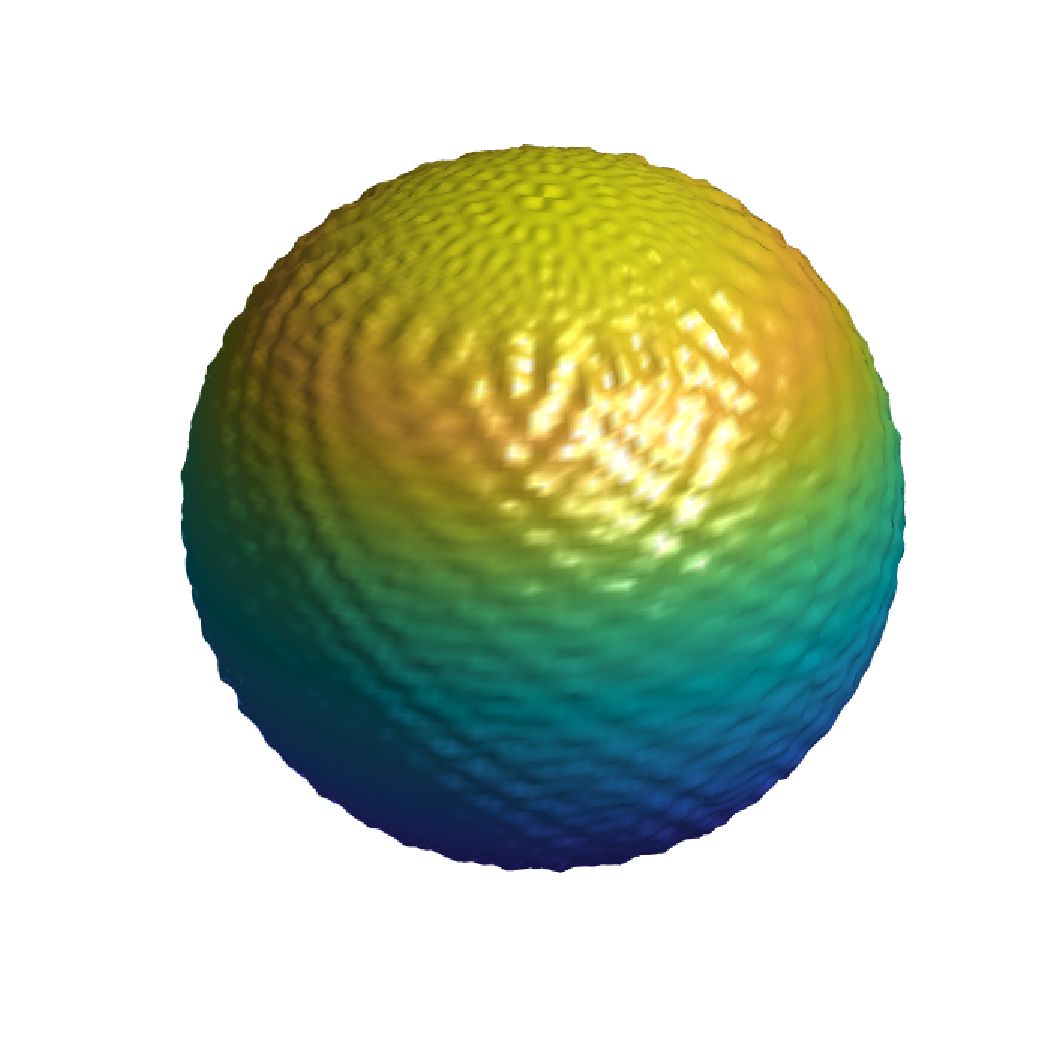}
        \caption{t = 0.0005}
        \label{fig:LP3-12}
    \end{subfigure}
    \begin{subfigure}[b]{0.24\textwidth}
        \includegraphics[width=\textwidth]{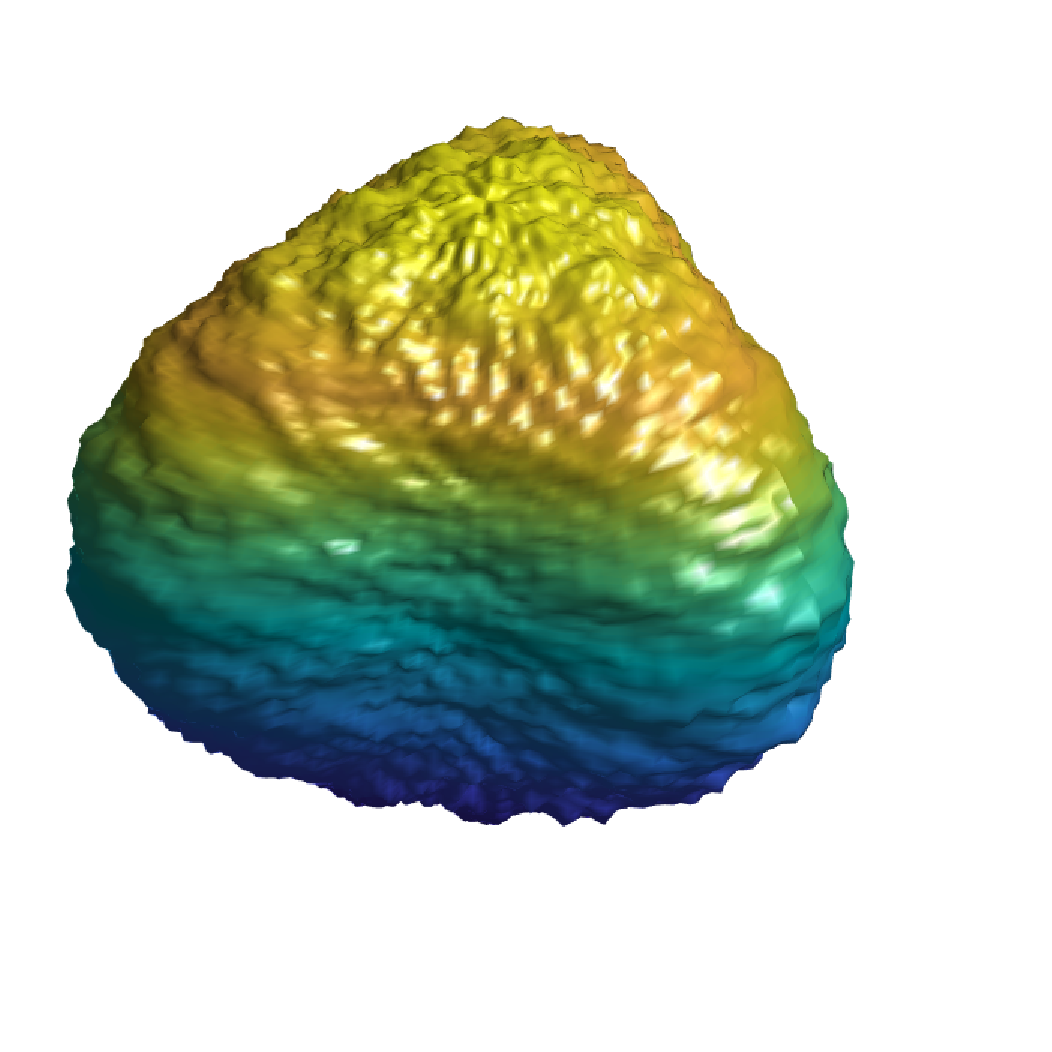}
        \caption{t = 0.0025}
        \label{fig:MLP3-2g}
    \end{subfigure}
    \begin{subfigure}[b]{0.24\textwidth}
        \includegraphics[width=\textwidth]{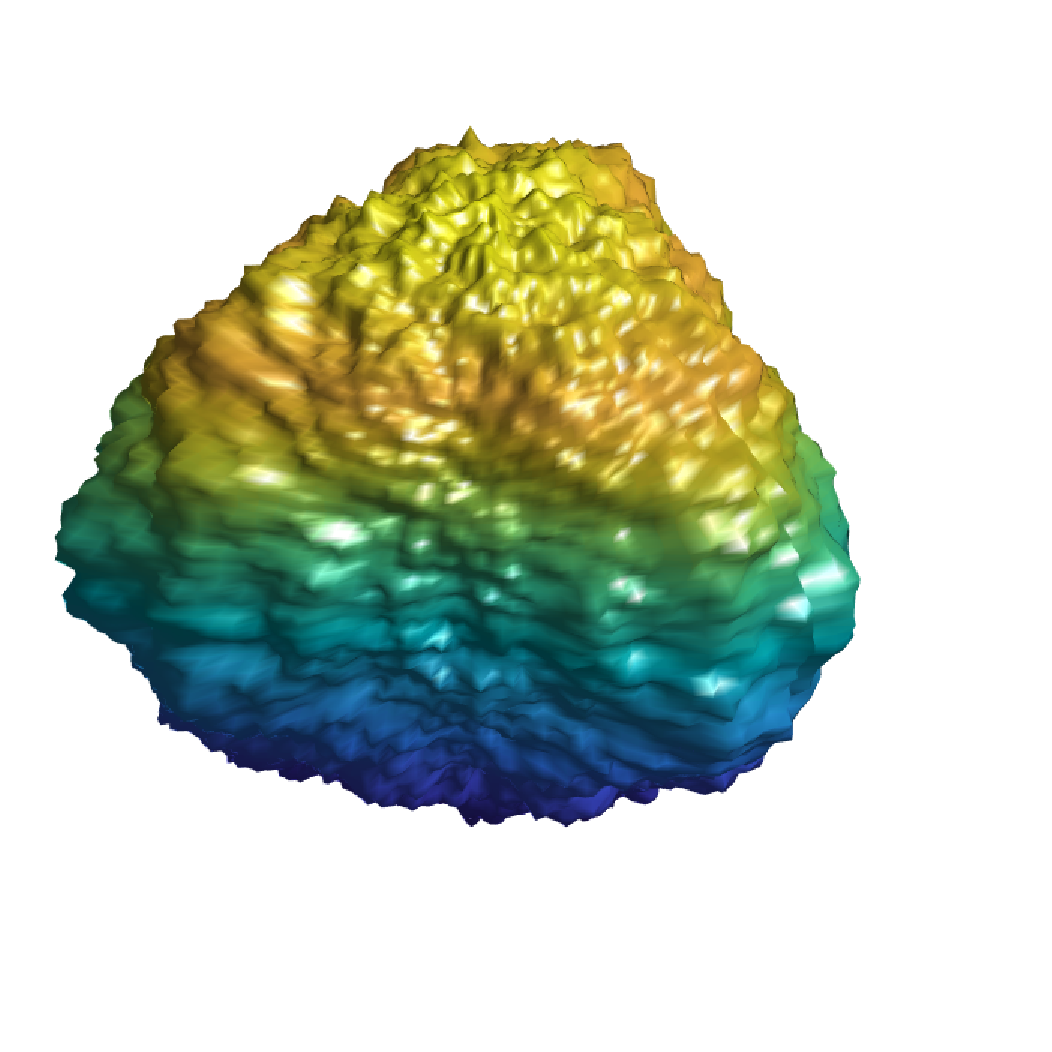}
        \caption{t= 0.005}
        \label{fig:LP3-3g}
    \end{subfigure}
    \begin{subfigure}[b]{0.24\textwidth}
        \includegraphics[width=\textwidth]{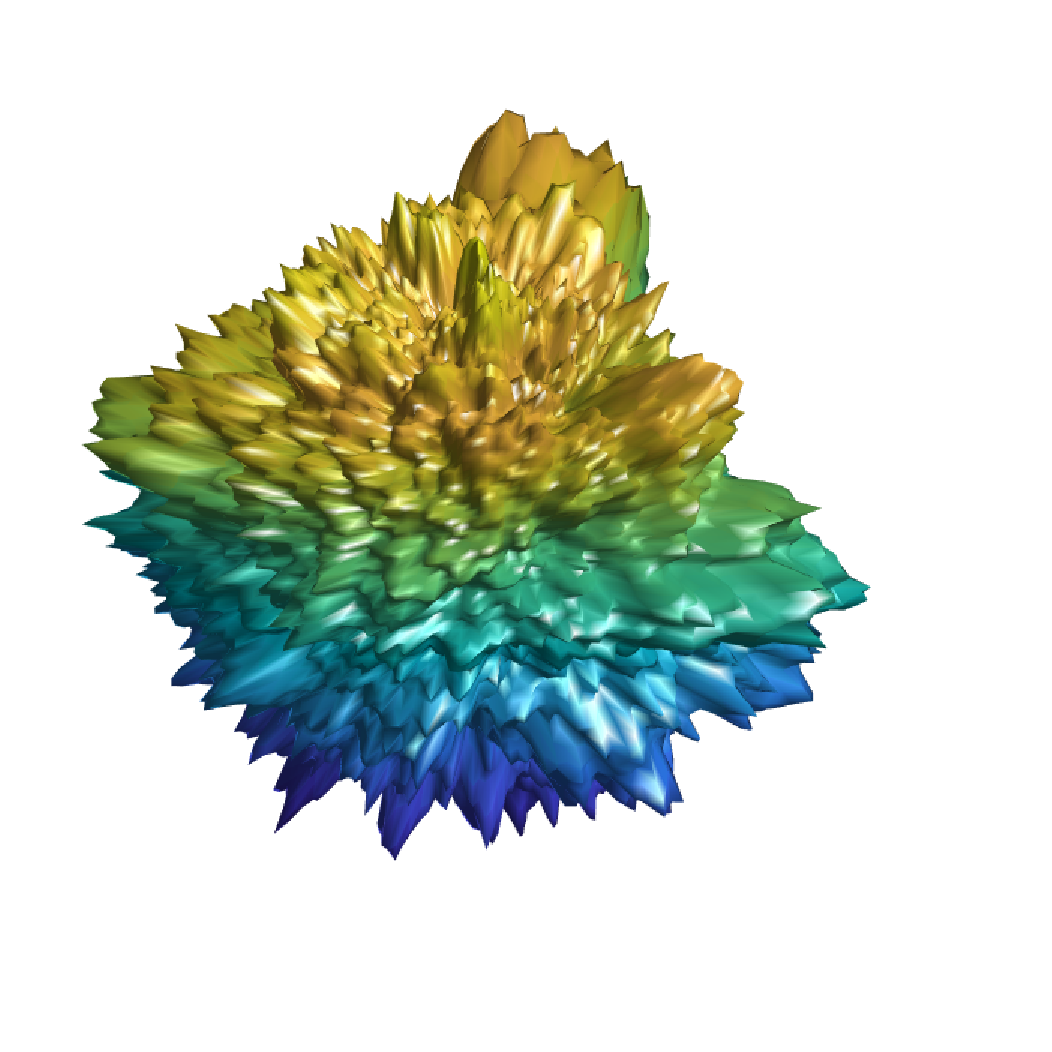}
        \caption{t= 0.05}
        \label{fig:LP3-4g}
    \end{subfigure}
    \begin{subfigure}[b]{0.24\textwidth}
        \includegraphics[width=\textwidth]{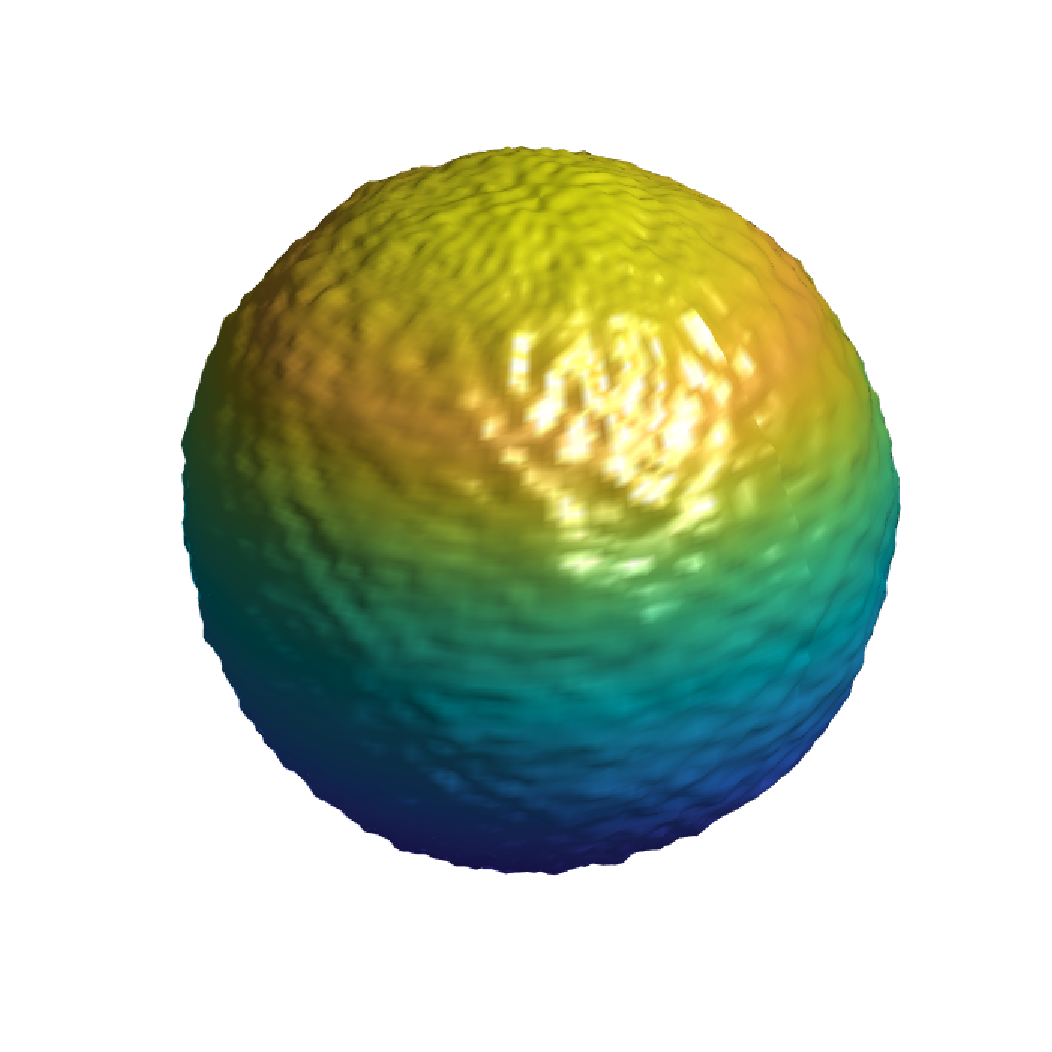}
        \caption{t = 0.0005}
        \label{fig:LP3-13}
    \end{subfigure}
    \begin{subfigure}[b]{0.24\textwidth}
        \includegraphics[width=\textwidth]{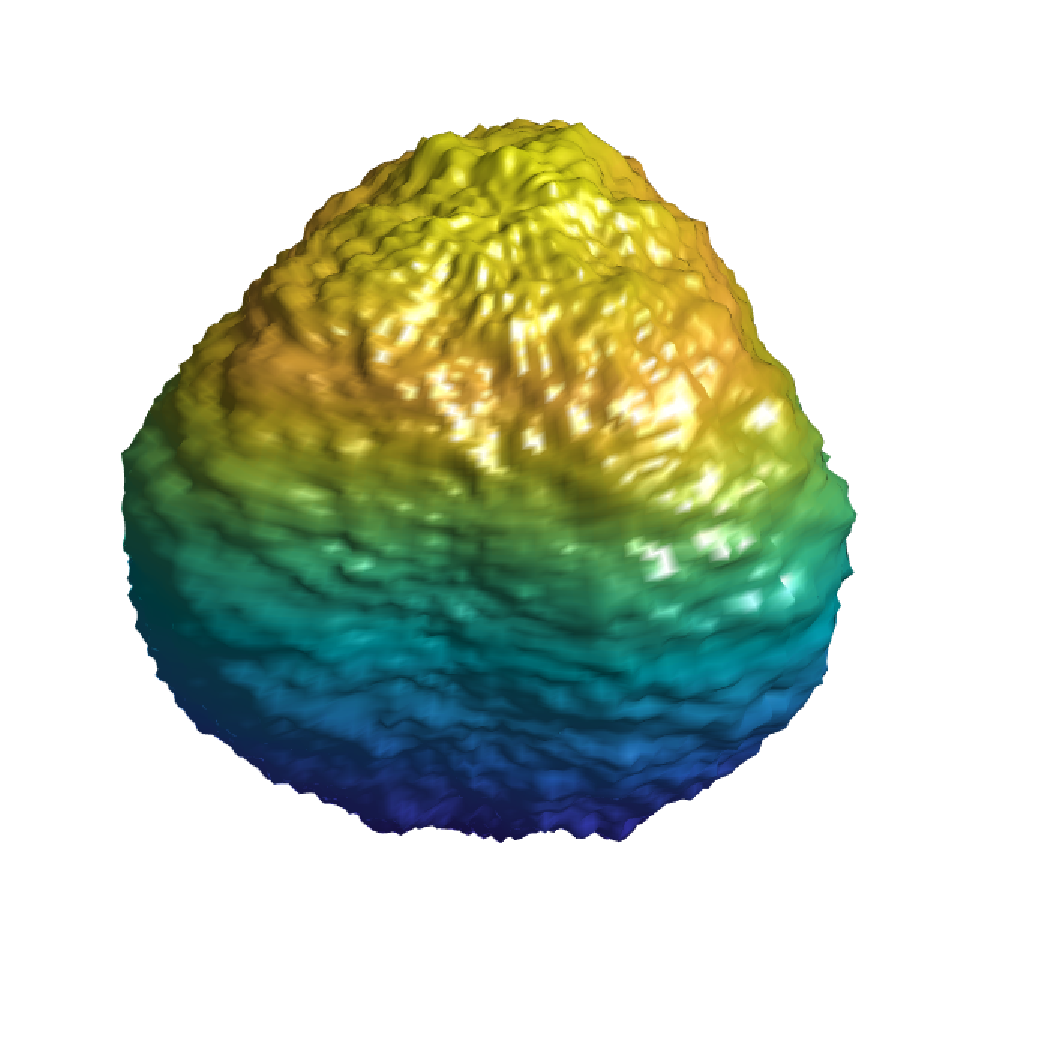}
        \caption{t = 0.0025}
        \label{fig:MLP3-2f}
    \end{subfigure}
    \begin{subfigure}[b]{0.24\textwidth}
        \includegraphics[width=\textwidth]{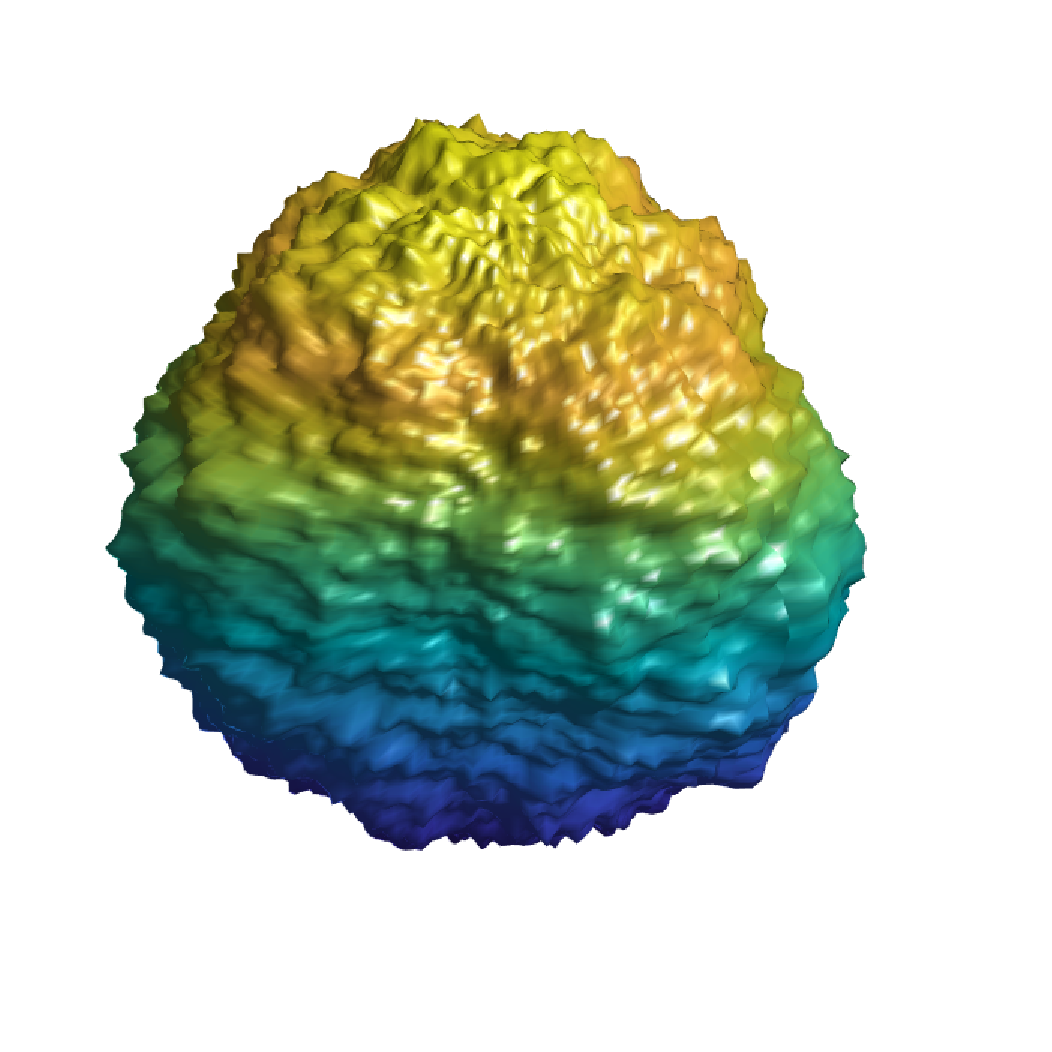}
        \caption{t= 0.005}
        \label{fig:LP3-3f}
    \end{subfigure}
    \begin{subfigure}[b]{0.24\textwidth}
        \includegraphics[width=\textwidth]{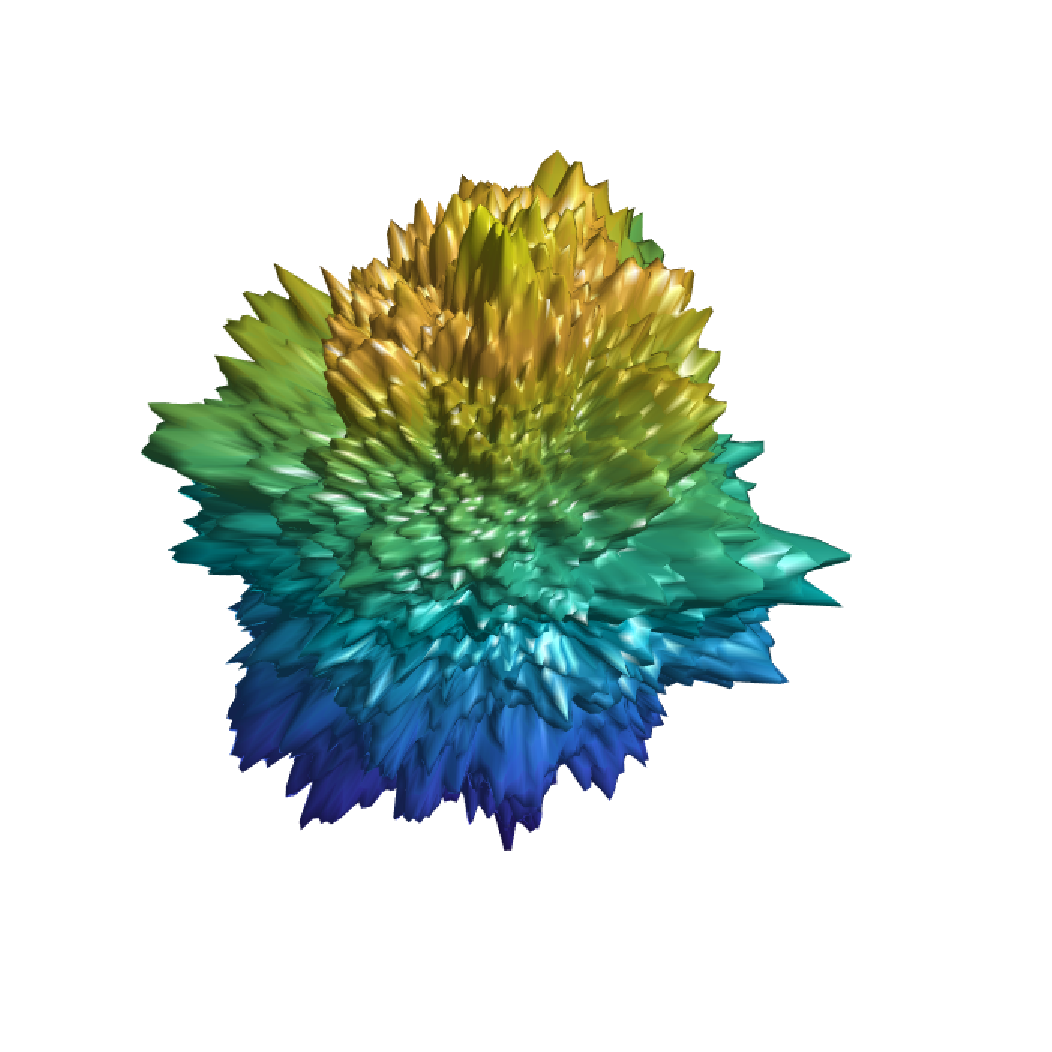}
        \caption{t= 0.05}
        \label{fig:LP3-4f}
    \end{subfigure}
    \caption{Sample of the noise evolving in time: (1)-(4) Wiener process, (5)-(8) Poisson process, (9)-(12) Sum of a Wiener and a Poisson process.}
    \label{fig:SHEsample}
\end{figure}

For the spectral approximation, we perform simulations using a reference solution at $\kappa=2^{10}$ and unit time $T=1$. The approximations are computed at different truncation levels $\kappa=2^j,$ with $j=0,...,9$.
In Figure~\ref{fig:SEs}, we present the strong error computed explicitly with initial condition $X^0=0$ and a Poisson process satisfying Assumption \ref{ass:SpecialLevyProcess} with independent Poisson components with intensity~$1$. The rates obtained for $\alpha=1,...,5$ confirm the theoretical results in Theorem~\ref{L:SA1}. 
In Figure~\ref{fig:MEs}, we observe the behavior of the expectation, which differs from the classical case of Wiener noise, since $\E[\hat{L}_{\ell,m}(1)]=1$. As shown in \cite{lang2023}, in that case only the exponential decay of the initial condition is observed, whereas here we see a decay in $\kappa$ that depends on the regularity of the mean of the driving noise confirming Corollary~\ref{L:SA2}. Similarly, as shown in Figure~\ref{fig:VEs}, the convergence of the second moment validates the theoretical results of Corollary~\ref{L:SA2}.

\begin{figure}[t]
    \centering
    \begin{subfigure}[b]{0.31\textwidth}
        \includegraphics[width=\textwidth]{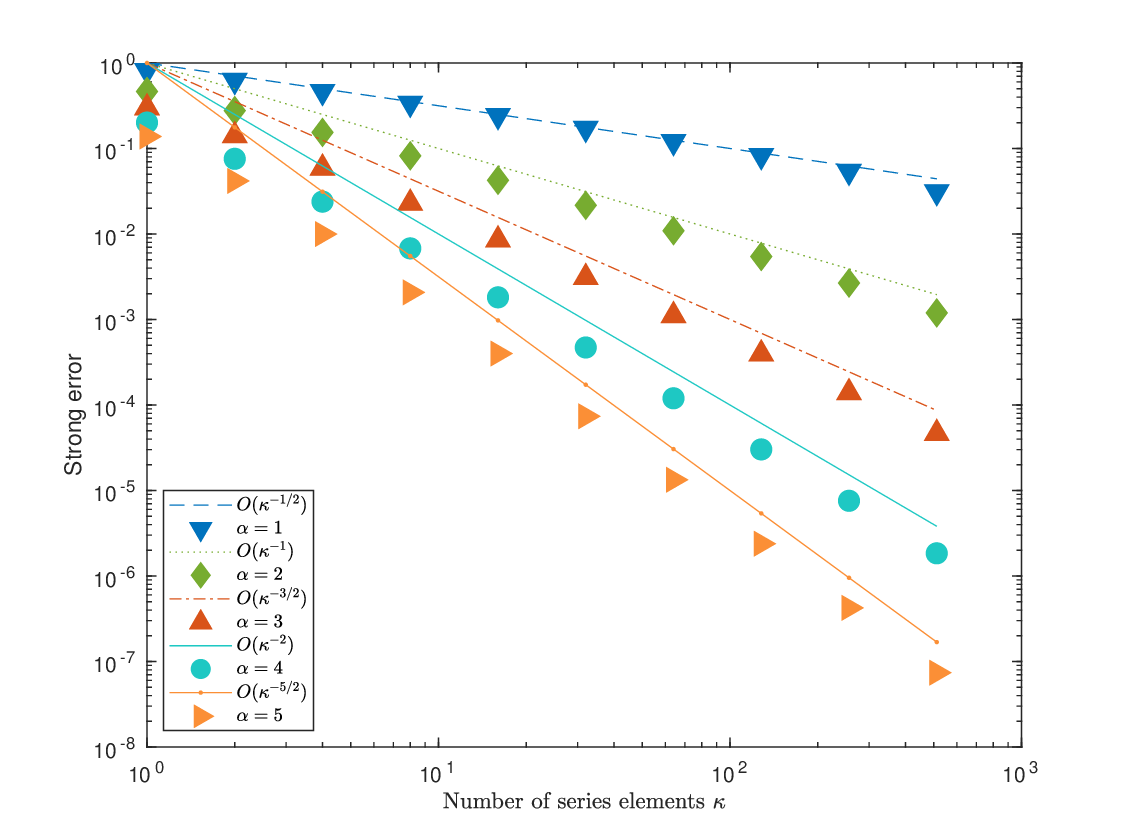}
        \caption{Strong error.}
        \label{fig:SEs}
    \end{subfigure}
    \begin{subfigure}[b]{0.31\textwidth}
        \includegraphics[width=\textwidth]{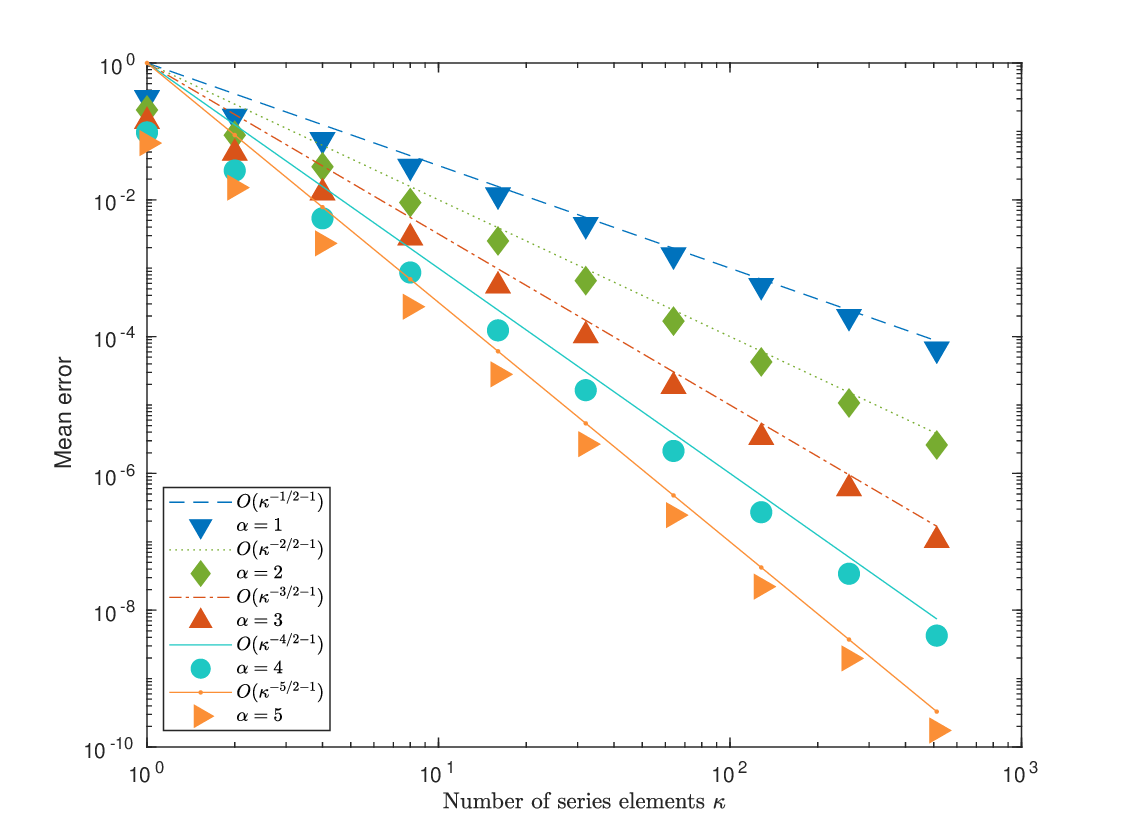}
        \caption{Mean error.}
        \label{fig:MEs}
    \end{subfigure}
    \begin{subfigure}[b]{0.31\textwidth}
        \includegraphics[width=\textwidth]{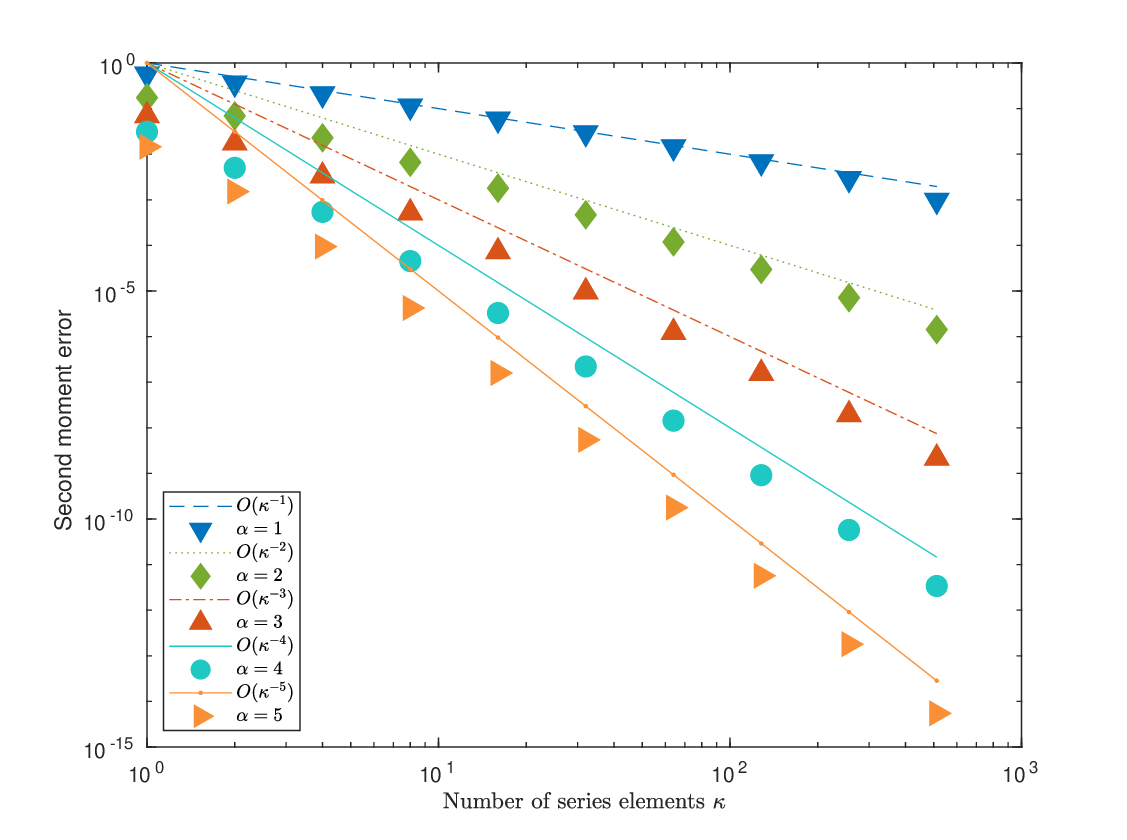}
        \caption{Second moment error.}
        \label{fig:VEs}
    \end{subfigure}
    \caption{Convergence of the spectral method for different $\alpha$.}
\end{figure}

\begin{figure}[t]
    \centering
    \begin{subfigure}[b]{0.31\textwidth}
        \includegraphics[width=\textwidth]{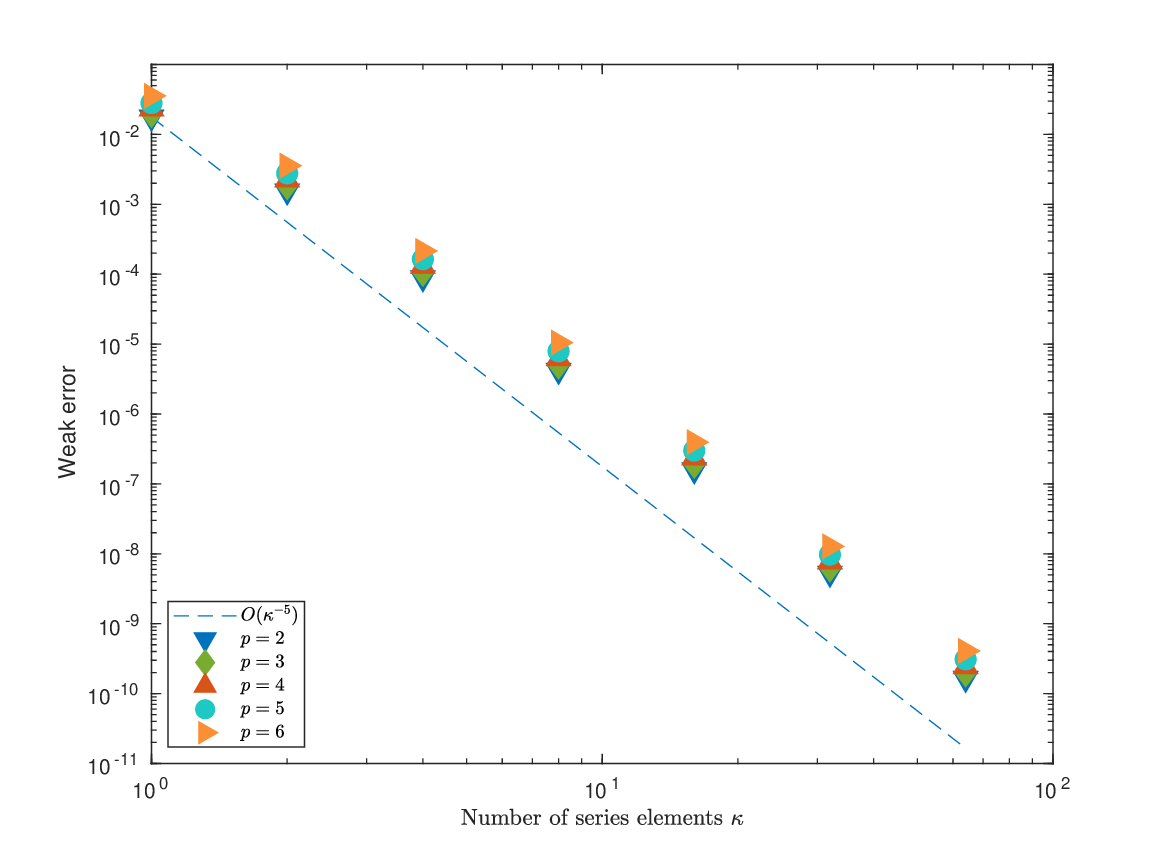}
        \caption{Fixed $\alpha=5$, Wiener.}
        \label{fig:WR_BM_p}
    \end{subfigure}
    \begin{subfigure}[b]{0.31\textwidth}
        \includegraphics[width=\textwidth]{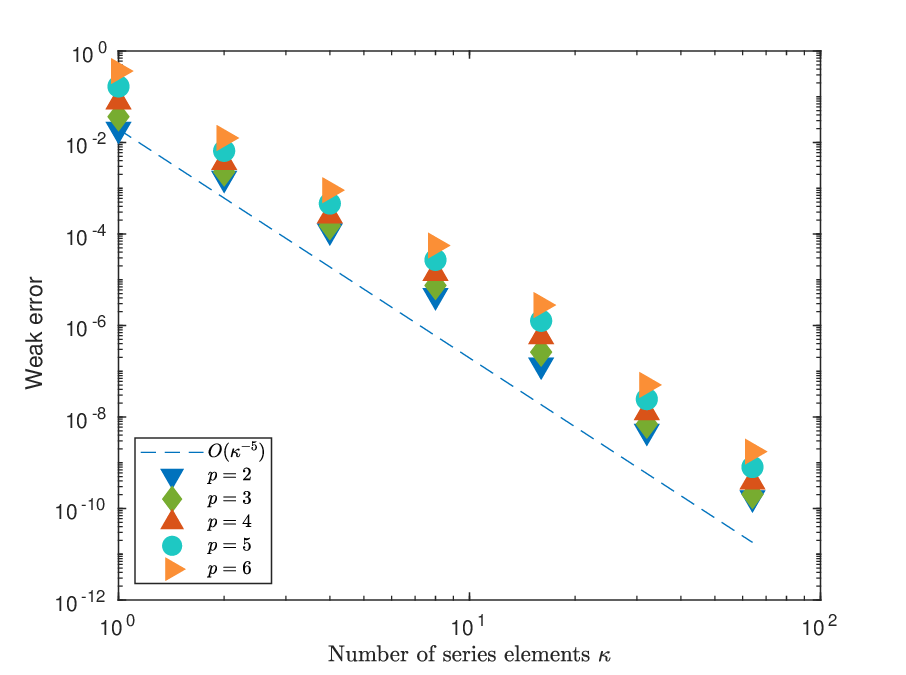}
        \caption{Fixed $\alpha=5$, Poisson.}
        \label{fig:WR_PP_p}
    \end{subfigure}
    \begin{subfigure}[b]{0.31\textwidth}
        \includegraphics[width=\textwidth]{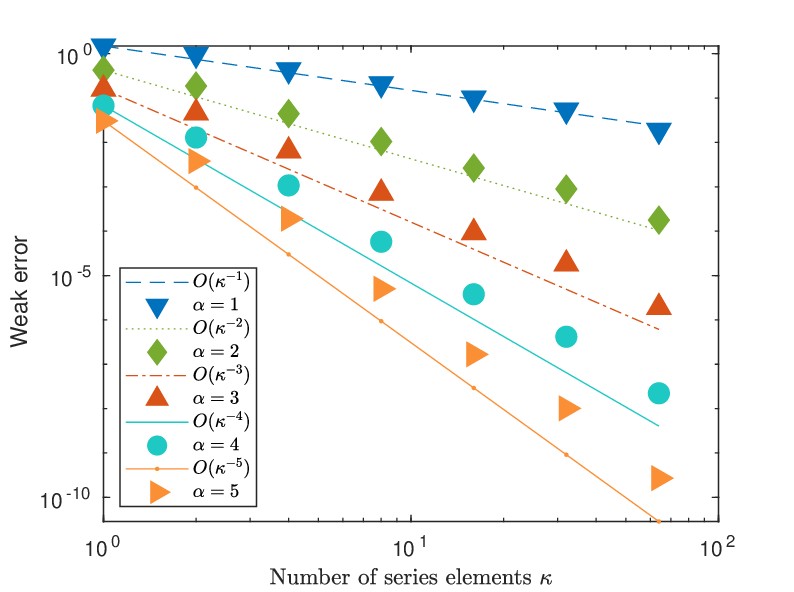}
        \caption{Fixed $p=3$, Poisson.}
        \label{fig:WR_BM_Alp}
    \end{subfigure}
    \caption{Spectral weak convergence for different processes~$L$, $p$, and $\alpha$: (1) Weak error of Wiener process with fixed $\alpha=5$, (2) Weak error of Poisson process with fixed $\alpha=5$, (3) Weak error of Poisson process with fixed $p=3$.}
\end{figure}
To confirm the weak convergence rates in Theorem~\ref{weakr}, we consider a reference solution at time $T=1$ with $\kappa=2^8$ with initial condition $X^0=0$.
The test functions are given by $\varphi(X)=\|X\|^p_{L^2(\S^2)}$, where $p \in \{2,3,4,5,6\}$, and we perform Monte Carlo simulations with 20 samples. Although this number is small, it is sufficient, since the same realizations are used at every discretization level as well as for the reference solution. This introduces a correlation between the corresponding estimators, substantially reducing the variance as shown in~\cite{LP18}.

In Figure~\ref{fig:WR_BM_p}, we show that the weak rates for a Wiener noise with a covariance decay proportional to $\ell^{-5}$ are independent of~$p$ as proven in Theorem~\ref{weakr}.
In Figures~\ref{fig:WR_PP_p}, we simulate the weak rates for a Lévy process whose components are independent Poisson processes with decay rate proportional to~$\ell^{-\alpha}$. We observe agreement of the decay with respect to the parameter~$\alpha$ and no dependence on $p$ on the rate of convergence as also shown in Theorem~\ref{weakr}.  
Furthermore, Figure~\ref{fig:WR_BM_Alp} illustrates for $p=3$ the dependence of the decay on the regularity parameter~$\alpha$ in agreement with the theoretical predictions.
We notice that we have assumed a strong property in the simulation of the processes, i.e., the independence of the components $(L_{\ell,m})_{\ell,m}$. This limitation, arising from the simplified numerical framework, and how to surpass it, will be addressed in future work.

\begin{figure}[t]
    \centering
    \begin{subfigure}[b]{0.31\textwidth}
        \includegraphics[width=\textwidth]{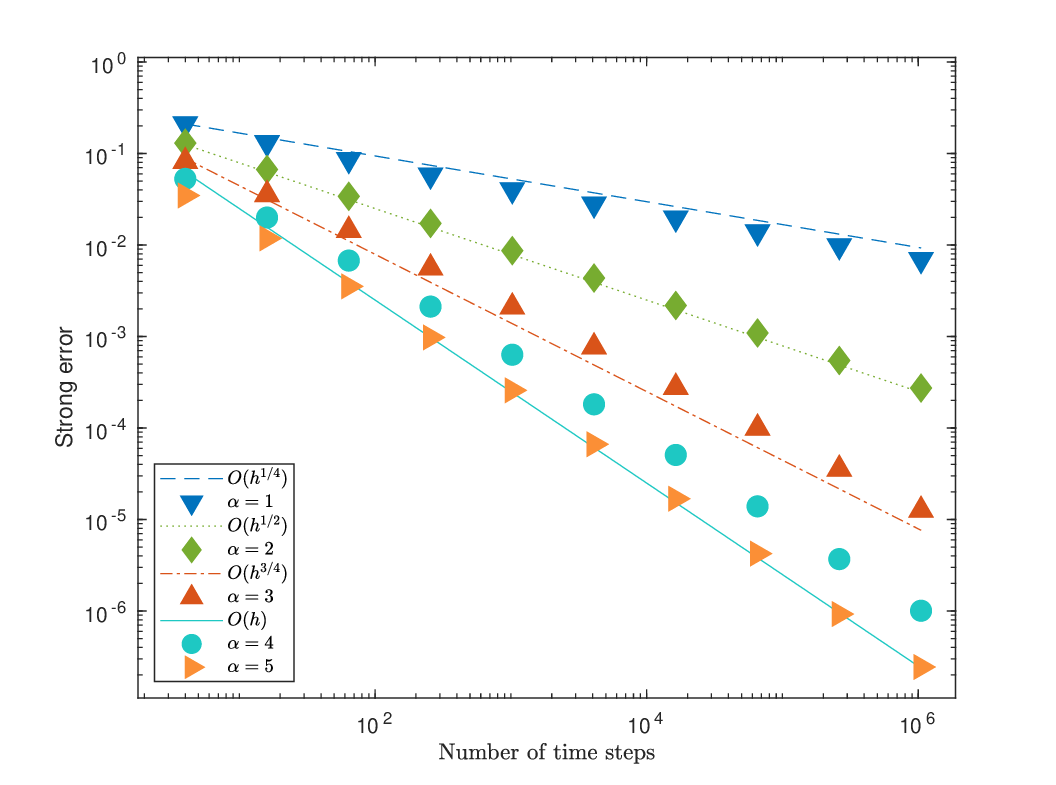}
        \caption{Strong error.}
        \label{fig:SEem}
    \end{subfigure}
    \begin{subfigure}[b]{0.31\textwidth}
        \includegraphics[width=\textwidth]{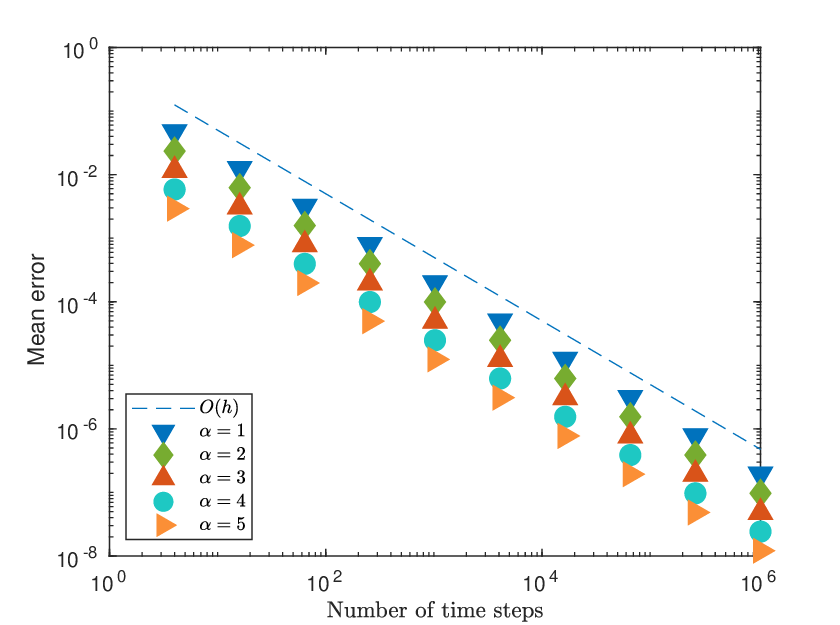}
        \caption{Mean error.}
        \label{fig:MEem}
    \end{subfigure}
    \begin{subfigure}[b]{0.31\textwidth}
        \includegraphics[width=\textwidth]{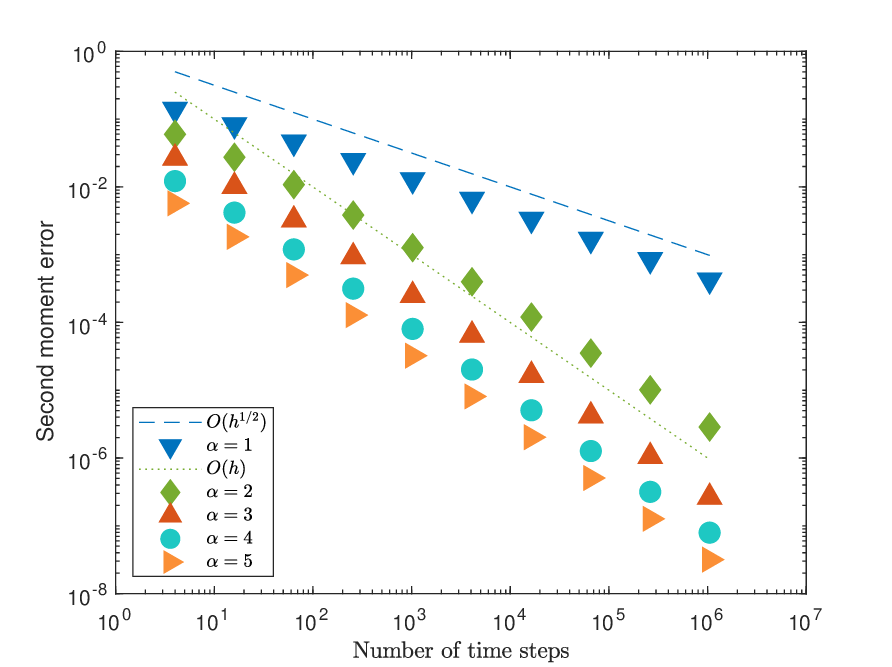}
        \caption{Second moment error.}
        \label{fig:SmEem}
    \end{subfigure}
    \caption{Convergence of the Euler--Maruyama method for different $\alpha$.}
\end{figure}

\begin{figure}[t]
    \centering
    \begin{subfigure}[b]{0.24\textwidth}
        \includegraphics[width=\textwidth]{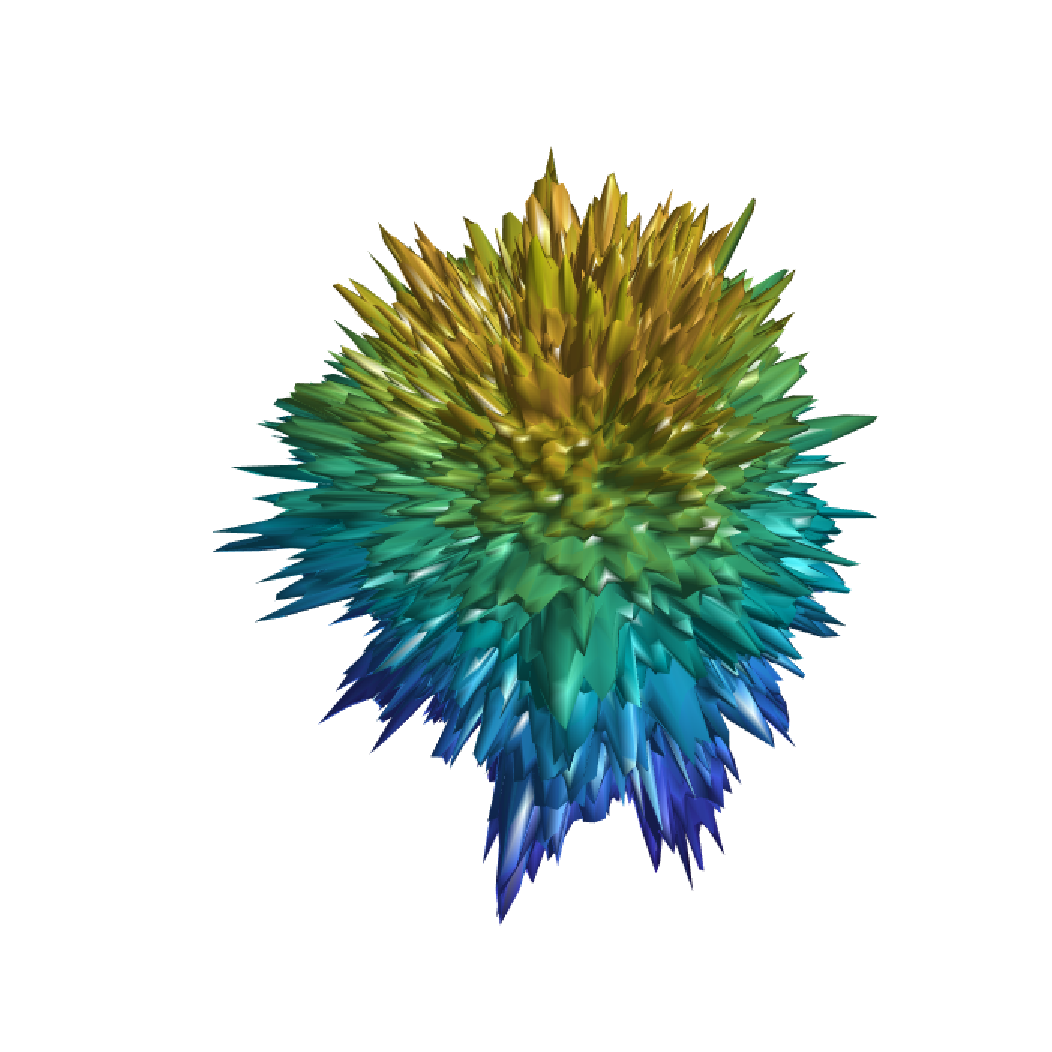}
        \caption{t = 0}
        \label{fig:LP3-1}
    \end{subfigure}
    \begin{subfigure}[b]{0.24\textwidth}
        \includegraphics[width=\textwidth]{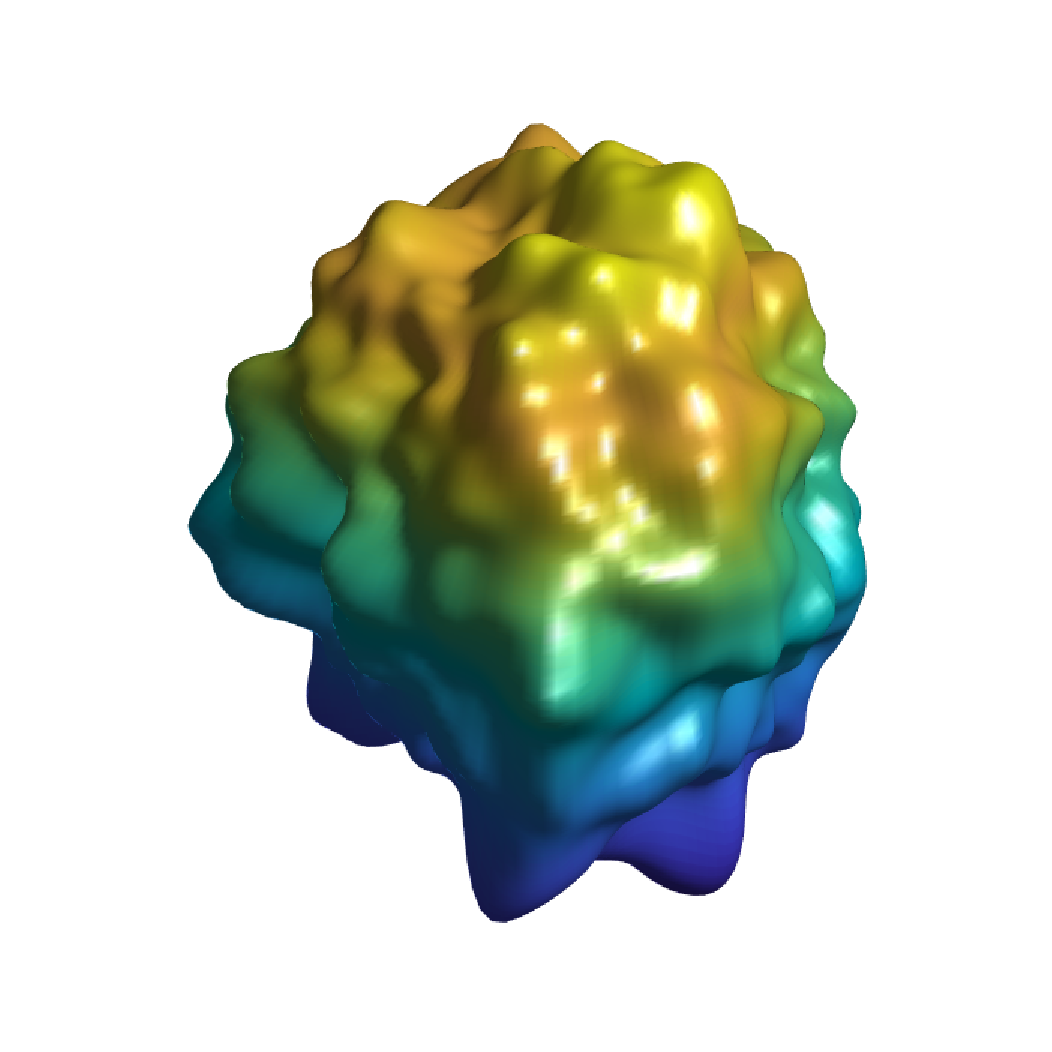}
        \caption{t = 0.0025}
        \label{fig:MLP3-2}
    \end{subfigure}
    \begin{subfigure}[b]{0.24\textwidth}
        \includegraphics[width=\textwidth]{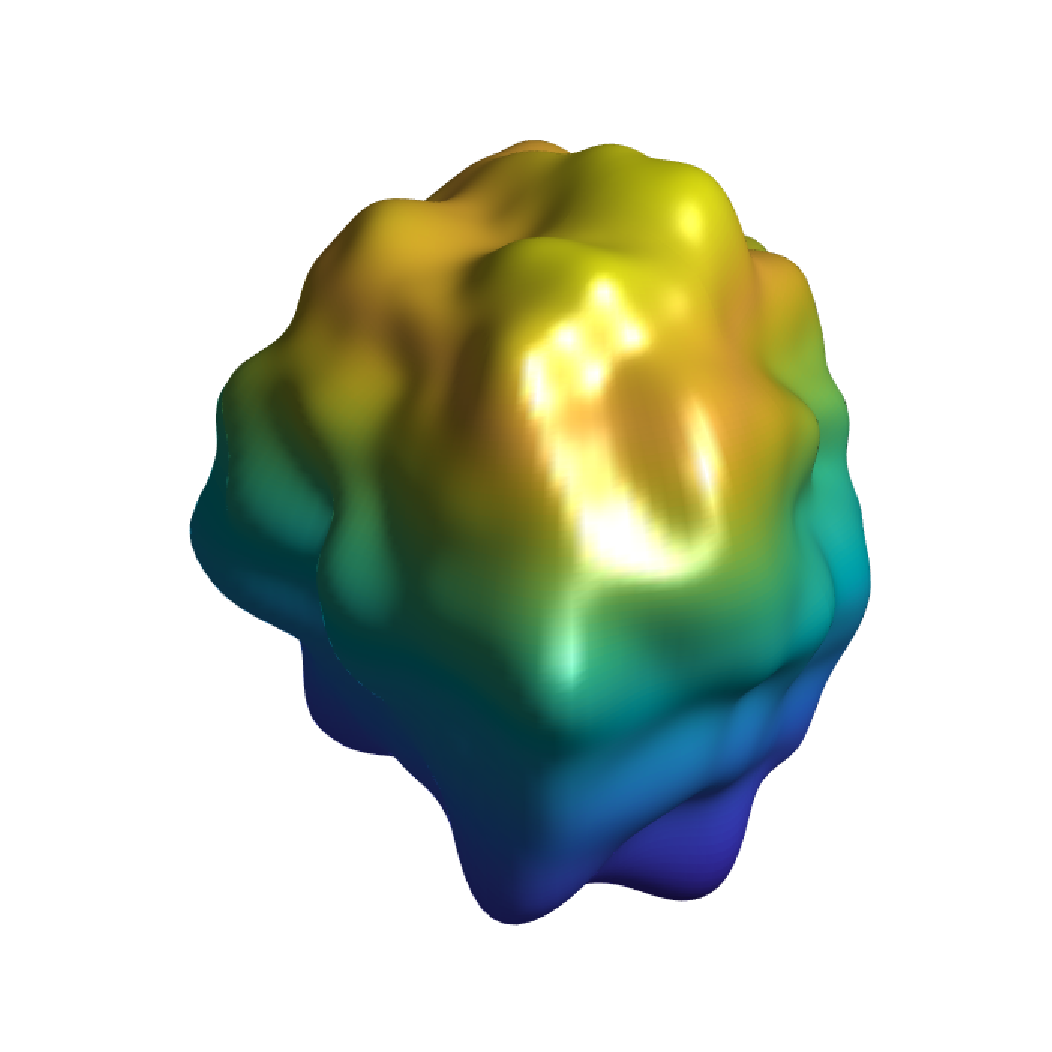}
        \caption{t= 0.005}
        \label{fig:LP3-3}
    \end{subfigure}
    \begin{subfigure}[b]{0.24\textwidth}
        \includegraphics[width=\textwidth]{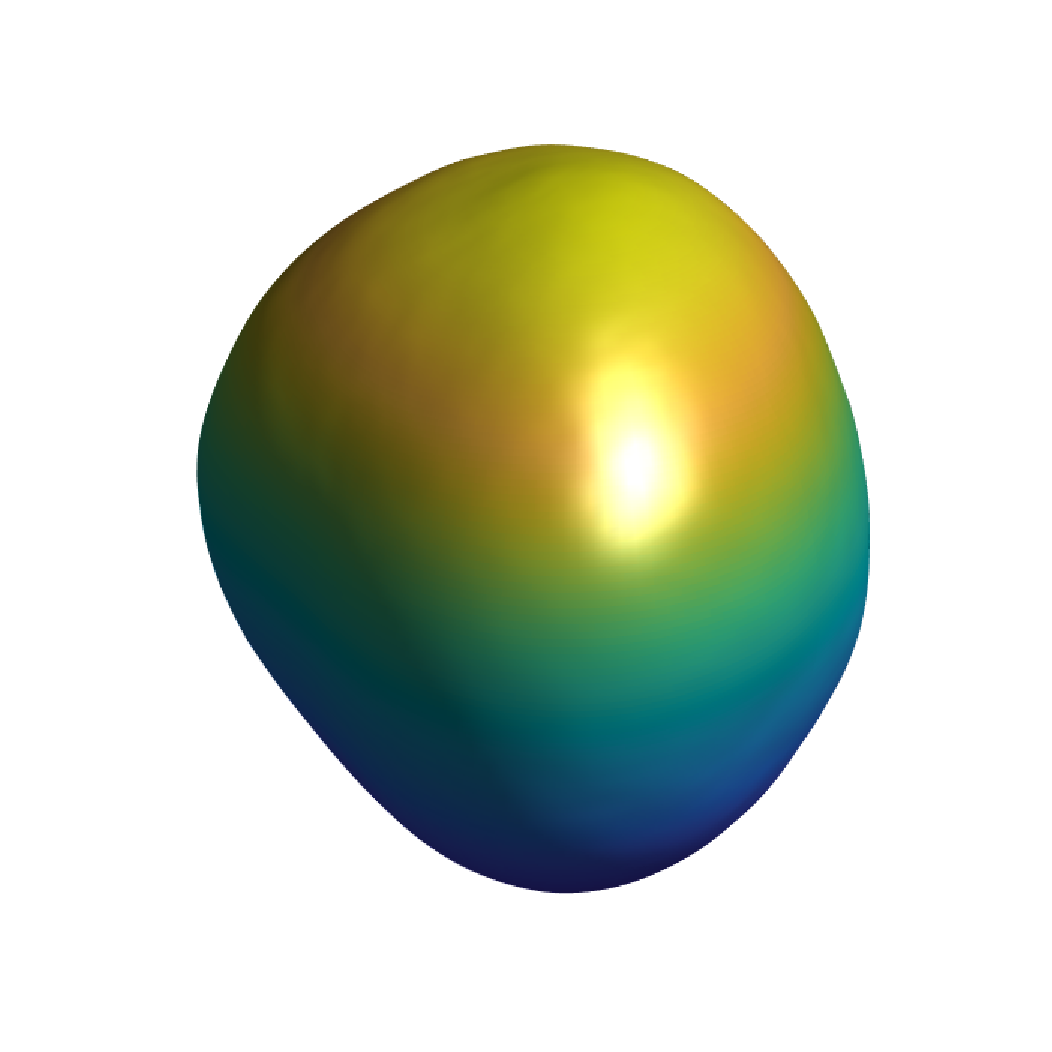}
        \caption{t= 0.05}
        \label{fig:LP3-4}
    \end{subfigure}
    \caption{Sample of the stochastic heat equation on the sphere evolving in time driven by the sum of a Wiener and a Poisson process.}
    \label{fig:sampleSHEmix}
\end{figure}

Having verified the spectral convergence, we now turn to simulating the time discretization using the forward and backward Euler--Maruyama schemes. For this, we focus on the error between $X^{(\kappa)}$ and $X^{(\kappa,h)}$. Simulations are performed on time grids with step size $h=2^{-2m}$ for $m=1,...,10$, coupled with $\kappa=2^m$ to ensure stability for the forward Euler–Maruyama scheme and to satisfy the estimates for the backward scheme. As with the spectral approximations, we set $X^0=0$ to concentrate on the convergence relative to the noise smoothness parameter~$\alpha$. The Lévy noise is decomposed into thes same independent Poisson processes as above satisfying Assumption~\ref{ass:SpecialLevyProcess}.

The results for the backward Euler–Maruyama scheme in Figure~\ref{fig:SEem}, computed explicitly, with a reference solution using $h=2^{-14}$ and $\kappa=2^7$, confirm the expected convergence rate of $h^{\min\{\alpha/4,1\}}$ from Theorem~\ref{Thm:ConvEM}.
Figure \ref{fig:MEem} shows the simulated convergence of the expectation for $\alpha\in\{1,2,3,4,5\}$. 
According to Theorem~\ref{Thm:ConvEM2}, we expect a convergence rate of order $h$ in time. To minimize smoothing over time, we used $T=0.05$,
and, as proven in the theory, the simulations show $\operatorname{O}(h)$ order of convergence. Finally, in Figure~\ref{fig:SmEem}, we see the convergence of the second moment for the Euler--Maruyama scheme. The setting is the same as for the strong error. The results show the expected convergence rate $h^{\min\{\alpha/2,1\}}$ as in Theorem~\ref{Thm:ConvEM2}.

We conclude the section showing in Figure~\ref{fig:sampleSHEmix} the evolution of the solution~$X(t)$ in time, with a mixture of Brownian and Poisson components, represented with the exponential transformation $\exp(X(t))$ as in Figure~\ref{fig:SHEsample}. For this, we take the same sample of the sum of a Wiener and a Poisson process as in Figure~\ref{fig:SHEsample} and start from a rough initial condition, i.e., in $L^2(\Omega, H^{-1}(\S^2))$.

\end{document}